\newtheorem{thm}{Theorem}[section]
\newtheorem*{thm*}{Theorem}
\newtheorem{lem}[thm]{Lemma}
\newtheorem*{lem*}{Lemma}
\newtheorem{mainthm}{Theorem}
\newtheorem*{mainthm*}{Theorem}
\newtheorem{maincor}[mainthm]{Corollary}
\newtheorem{prop}[thm]{Proposition}
\theoremstyle{definition}
\newtheorem*{case*}{Case}
\newtheorem{conj}[thm]{Conjecture}
\newtheorem{defn}[thm]{Definition}
\newtheorem*{defn*}{Definition}
\newtheorem*{exmp*}{Example}
\renewcommand{\thestep}{}
\theoremstyle{remark}
\renewcommand{\thecase}{}
\newtheorem{rmk}[thm]{Remark}
\newtheorem*{rmk*}{Remark}
\def\alphenumi{
  \def\theenumi{\alph{enumi}}
  \def\p@enumi{\theenumi}
  \def\labelenumi{(\@alph\c@enumi)}}
\def\thecase{\@arabic\c@case}
\def\thestep{\@arabic\c@step}
\def\hhmm{\number\hh:\ifnum\mm<10{}0\fi\number\mm}
\let\oldmarginpar\marginpar
\renewcommand\marginpar[1]{\-\oldmarginpar[\raggedleft\footnotesize #1]%
{\raggedright\footnotesize #1}}
\newcommand\CC{\mathbb{C}}
\newcommand\EE{\mathbb{E}}
\newcommand\NN{\mathbb{N}}
\newcommand\RR{\mathbb{R}}
\newcommand\ZZ{\mathbb{Z}}
\newcommand\fg{{\mathfrak{g}}}
\newcommand\sB{{\mathscr{B}}}
\newcommand\sC{{\mathscr{C}}}
\newcommand\sD{{\mathscr{D}}}
\newcommand\sE{{\mathscr{E}}}
\newcommand\sF{{\mathscr{F}}}
\newcommand\sG{{\mathscr{G}}}
\newcommand\sH{{\mathscr{H}}}
\newcommand\sI{{\mathscr{I}}}
\newcommand\sL{{\mathscr{L}}}
\newcommand\sM{{\mathscr{M}}}
\newcommand\sN{{\mathscr{N}}}
\newcommand\sT{{\mathscr{T}}}
\newcommand\sU{{\mathscr{U}}}
\newcommand\sV{{\mathscr{V}}}
\newcommand\sX{{\mathscr{X}}}
\newcommand\sY{{\mathscr{Y}}}
\newcommand\eps{\varepsilon}
\newcommand\U{\operatorname{U}}
\newcommand\less{\setminus}
\newcommand\ad{{\operatorname{ad}}}
\newcommand\Ad{{\operatorname{Ad}}}
\newcommand\Aut{\operatorname{Aut}}
\DeclareMathOperator{\Crit}{Crit}
\newcommand\divg{\operatorname{div}}
\newcommand\End{\operatorname{End}}
\newcommand{\esssup}{\operatornamewithlimits{ess\ sup}}
\newcommand\grad{\operatorname{grad}}
\newcommand\Hom{\operatorname{Hom}}
\newcommand\Ker{\operatorname{Ker}}
\newcommand\Real{\operatorname{Re}}
\newcommand\vol{\operatorname{vol}}
\newcommand\id{{\mathrm{id}}}
\newcommand\mutatis{{\emph{mutatis mutandis }}}
\newcommand\spinc{\text{$\text{spin}^c$ }}
\newcommand\Spinc{\text{$\text{Spin}^c$}}
\numberwithin{equation}{section}
\begin{document}

\title[{\L}ojasiewicz--Simon gradient inequalities]{{\L}ojasiewicz--Simon gradient inequalities for analytic and Morse--Bott functions on Banach spaces}

\author[Paul M. N. Feehan]{Paul M. N. Feehan}
\address{Department of Mathematics, Rutgers, The State University of New Jersey, 110 Frelinghuysen Road, Piscataway, NJ 08854-8019, United States of America}
\email{feehan@math.rutgers.edu}

\author[Manousos Maridakis]{Manousos Maridakis}
\address{Department of Mathematics, Rutgers, The State University of New Jersey, 110 Frelinghuysen Road, Piscataway, NJ 08854-8019, United States of America}
\email{mmaridaki1@gmail.com}

\date{This version: September 16, 2019, incorporating final galley proof corrections except for publisher's omission of unused equation number labels and reordering of bibliography. \emph{Journal f\"ur die reine und angewandte Mathematik} (2019), https://doi.org/10.1515/crelle-2019-0029}

\begin{abstract}
We prove several abstract versions of the {\L}ojasiewicz--Simon gradient inequality for an analytic function on a Banach space that generalize previous abstract versions of this inequality, weakening their hypotheses and, in particular, that of the well-known infinite-dimensional version of the gradient inequality due to {\L}ojasiewicz \cite{Lojasiewicz_1965} proved by Simon as \cite[Theorem 3]{Simon_1983}. We prove that the optimal exponent of the {\L}ojasiewicz--Simon gradient inequality is obtained when the function is Morse--Bott, improving on similar results due to Chill \cite[Corollary 3.12]{Chill_2003}, \cite[Corollary 4]{Chill_2006}, Haraux and Jendoubi \cite[Theorem 2.1]{Haraux_Jendoubi_2007}, and Simon \cite[Lemma 3.13.1]{Simon_1996}. In \cite{Feehan_Maridakis_Lojasiewicz-Simon_application_harmonic_maps}, we apply our abstract gradient inequalities to prove {\L}ojasiewicz--Simon gradient inequalities for the harmonic map energy function using Sobolev spaces which impose minimal regularity requirements on maps between closed, Riemannian manifolds. Those inequalities generalize those of Kwon \cite[Theorem 4.2]{KwonThesis}, Liu and Yang \cite[Lemma 3.3]{Liu_Yang_2010}, Simon \cite[Theorem 3]{Simon_1983}, \cite[Equation (4.27)]{Simon_1985}, and Topping \cite[Lemma 1]{Topping_1997}. In \cite{Feehan_Maridakis_Lojasiewicz-Simon_coupled_Yang-Mills_v6}, we prove {\L}ojasiewicz--Simon gradient inequalities for coupled Yang--Mills energy functions using Sobolev spaces which impose minimal regularity requirements on pairs of connections and sections. Those inequalities generalize that of the pure Yang--Mills energy function due to the first author \cite[Theorems 23.1 and 23.17]{Feehan_yang_mills_gradient_flow_v4} for base manifolds of arbitrary dimension and due to R\r{a}de \cite[Proposition 7.2]{Rade_1992} for dimensions two and three.
\end{abstract}


\subjclass[2010]{Primary 58E20; secondary 37D15}

\keywords{{\L}ojasiewicz--Simon gradient inequality, Morse--Bott theory on Banach manifolds}

\thanks{Paul Feehan was partially supported by National Science Foundation grant DMS-1510064 and the Oswald Veblen Fund and Fund for Mathematics (Institute for Advanced Study, Princeton) during the preparation of this article.}

\maketitle
\tableofcontents

\section{Introduction}
\label{sec:Introduction}
Since its discovery by {\L}ojasiewicz in the context of analytic functions on Euclidean spaces\footnote{The first page number refers to the version of {\L}ojasiewicz's original manuscript mimeographed by IHES while the page number in parentheses refers to the cited LaTeX version of his manuscript prepared by M. Coste and available on the Internet.} \cite[Proposition 1, p. 92 (67)]{Lojasiewicz_1965} and subsequent generalization by Simon to a class of analytic functions on certain H\"older spaces \cite[Theorem 3]{Simon_1983}, the \emph{{\L}ojasiewicz--Simon gradient inequality} has played a significant role in analyzing questions such as the
\begin{inparaenum}[\itshape a\upshape)]
\item global existence, convergence, and analysis of singularities for solutions to nonlinear evolution equations that are realizable as gradient-like systems,
\item uniqueness of tangent cones, and
\item energy gaps and discreteness of energies.
\end{inparaenum}
For applications of the {\L}ojasiewicz--Simon gradient inequality to gradient flows arising in geometric analysis, beginning with the harmonic map energy function, we refer to Irwin \cite{IrwinThesis}, Kwon \cite{KwonThesis}, Liu and Yang \cite{Liu_Yang_2010}, Simon \cite{Simon_1985}, and Topping \cite{ToppingThesis, Topping_1997}; for gradient flow for the Chern-Simons function, see Morgan, Mrowka, and Ruberman \cite{MMR}; for gradient flow for the Yamabe function, see Brendle \cite[Lemma 6.5 and Equation (100)]{Brendle_2005} and Carlotto, Chodosh, and Rubinstein \cite{Carlotto_Chodosh_Rubinstein_2015}; for Yang-Mills gradient flow, we refer to our monograph \cite{Feehan_yang_mills_gradient_flow_v4}, R\r{a}de \cite{Rade_1992}, and Yang \cite{Yang_2003aim}; for mean curvature flow, we refer to the survey by Colding and Minicozzi \cite{Colding_Minicozzi_2014sdg}; and for Ricci curvature flow, see Ache \cite{Ache_2011arxiv}, Haslhofer \cite{Haslhofer_2012cvpde}, Haslhofer and M{\"u}ller \cite{Haslhofer_Muller_2014}, and Kr{\"o}ncke \cite{Kroncke_2015cvpde, Kroncke_2013arxiv}.  In Feehan and Maridakis \cite{Feehan_Maridakis_Lojasiewicz-Simon_application_harmonic_maps}, we apply Theorems \ref{mainthm:Lojasiewicz-Simon_gradient_inequality} and \ref{mainthm:Lojasiewicz-Simon_gradient_inequality2} to prove {\L}ojasiewicz--Simon gradient inequalities for the harmonic map energy function (see Theorem \ref{mainthm:Lojasiewicz-Simon_gradient_inequality_energy_functional_Riemannian_manifolds} and Corollary \ref{maincor:Lojasiewicz-Simon_gradient_inequality_energy_functional_Riemannian_manifolds_L2}).

For applications of the {\L}ojasiewicz--Simon gradient inequality to proofs of global existence, convergence, convergence rate, and stability of non-linear evolution equations arising in other areas of mathematical physics (including the Cahn-Hilliard, Ginzburg-Landau, Kirchoff-Carrier, porous medium, reaction-diffusion, and semi-linear heat and wave equations), we refer to the monograph by Huang \cite{Huang_2006} for a comprehensive introduction and to the articles by Chill \cite{Chill_2003, Chill_2006}, Chill and Fiorenza \cite{Chill_Fiorenza_2006}, Chill, Haraux, and Jendoubi \cite{Chill_Haraux_Jendoubi_2009}, Chill and Jendoubi \cite{Chill_Jendoubi_2003, Chill_Jendoubi_2007}, Feireisl and Simondon \cite{Feireisl_Simondon_2000}, Feireisl and Tak{\'a}{\v{c}} \cite{Feireisl_Takac_2001}, Grasselli, Wu, and Zheng \cite{Grasselli_Wu_Zheng_2009}, Haraux \cite{Haraux_2012}, Haraux and Jendoubi \cite{Haraux_Jendoubi_1998, Haraux_Jendoubi_2007, Haraux_Jendoubi_2011}, Haraux, Jendoubi, and Kavian \cite{Haraux_Jendoubi_Kavian_2003}, Huang and Tak{\'a}{\v{c}} \cite{Huang_Takac_2001}, Jendoubi \cite{Jendoubi_1998jfa}, Rybka and Hoffmann \cite{Rybka_Hoffmann_1998, Rybka_Hoffmann_1999}, Simon \cite{Simon_1983}, and Tak{\'a}{\v{c}} \cite{Takac_2000}. For applications to fluid dynamics, see the articles by Feireisl, Lauren{\c{c}}ot, and Petzeltov{\'a} \cite{Feireisl_Laurencot_Petzeltova_2007}, Frigeri, Grasselli, and Krej{\v{c}}{\'{\i}} \cite{Frigeri_Grasselli_Krejcic_2013}, Grasselli and Wu \cite{Grasselli_Wu_2013}, and Wu and Xu \cite{Wu_Xu_2013}.

For applications of the {\L}ojasiewicz--Simon gradient inequality to proofs of energy gaps, we refer to our article \cite{Feehan_yangmillsenergygapflat}. A key feature of our versions of the {\L}ojasiewicz--Simon gradient inequality for the pure Yang-Mills energy function \cite[Theorems 23.1 and 23.17]{Feehan_yang_mills_gradient_flow_v4} is that they hold for $W^{2,p}$ and $W^{1,2}$ Sobolev norms, which are considerably weaker than the $C^{2,\alpha}$ H{\"o}lder norms originally employed by Simon in \cite[Theorem 3]{Simon_1983} and afford greater flexibility in applications. For example, when $(X,g)$ is a closed, four-dimensional, Riemannian manifold, the $W^{1,2}$ Sobolev norm on (bundle-valued) one-forms is (in a suitable sense) \emph{quasi-conformally invariant} with respect to conformal changes in the Riemannian metric $g$. In Feehan and Maridakis \cite{Feehan_Maridakis_Lojasiewicz-Simon_coupled_Yang-Mills_v6}, we apply Theorems \ref{mainthm:Lojasiewicz-Simon_gradient_inequality} and \ref{mainthm:Lojasiewicz-Simon_gradient_inequality2} to prove {\L}ojasiewicz--Simon gradient inequalities for coupled Yang-Mills energy functions (see Theorems \ref{mainthm:Lojasiewicz-Simon_gradient_inequality_boson_Yang--Mills_energy_function} and \ref{mainthm:Lojasiewicz-Simon_gradient_inequality_fermion_Yang--Mills_energy_function}).

There are essentially three approaches to establishing a {\L}ojasiewicz--Simon gradient inequality for a particular energy function arising in geometric analysis or mathematical physics:
\begin{inparaenum}[\itshape 1\upshape)]
\item establish the inequality from first principles,
\item adapt the argument employed by Simon in the proof of his
\cite[Theorem 3]{Simon_1983}, or
\item apply an abstract version of the {\L}ojasiewicz--Simon gradient inequality for an analytic or Morse--Bott function on a Banach space.
\end{inparaenum}
Most famously, the first approach is exactly that employed by Simon in \cite{Simon_1983}, although this is also the avenue followed by Kwon \cite{KwonThesis}, Liu and Yang \cite{Liu_Yang_2010} and Topping \cite{ToppingThesis, Topping_1997} for the harmonic map energy function and by R\r{a}de \cite{Rade_1992} for the Yang-Mills energy function. Occasionally a development from first principles may be necessary, as discussed by Colding and Minicozzi in \cite{Colding_Minicozzi_2014sdg}. However, in almost all of the remaining examples cited, one can derive a {\L}ojasiewicz--Simon gradient inequality for a specific application from an abstract version for an analytic or Morse--Bott function on a Banach space. For this strategy to work well, one desires an abstract {\L}ojasiewicz--Simon gradient inequality with the weakest possible hypotheses and a proof of such a gradient inequality (Theorem \ref{mainthm:Lojasiewicz-Simon_gradient_inequality}) is one purpose of the present article. We also prove an abstract {\L}ojasiewicz--Simon gradient inequality, with the optimal exponent, for a Morse--Bott function on a Banach space, generalizing and unifying previous versions of the {\L}ojasiewicz--Simon gradient inequality with optimal exponent obtained in specific examples.

While our abstract versions of the {\L}ojasiewicz--Simon gradient inequality (Theorems \ref{mainthm:Lojasiewicz-Simon_gradient_inequality_dualspace}, \ref{mainthm:Lojasiewicz-Simon_gradient_inequality}, \ref{mainthm:Lojasiewicz-Simon_gradient_inequality2}, and \ref{mainthm:Optimal_Lojasiewicz-Simon_gradient_inequality_Morse-Bott_energy_functional}) are versatile enough to apply to many problems in geometric analysis, mathematical physics, and applied mathematics, it is worth noting that there are situations where it appears difficult to derive a {\L}ojasiewicz--Simon gradient inequality for a specific application from an abstract version. For example, a gradient inequality due to Feireisl, Issard-Roch, and Petzeltov{\'a} applies to functions that are not $C^2$ \cite[Proposition 4.1 and Remark 4.1]{Feireisl_Issard-Roch_Petzeltova_2004}. Colding and Minicozzi describe certain gradient inequalities \cite[Theorems 2.10 and 2.12]{Colding_Minicozzi_2014sdg} employed in their work on mean curvature flow that do not appear to follow from abstract {\L}ojasiewicz--Simon gradient inequalities or even the usual arguments underlying their proofs \cite[Section 1]{Colding_Minicozzi_2014sdg}. Nevertheless, those examples should not preclude consideration of abstract {\L}ojasiewicz--Simon gradient inequalities with the broadest possible application.

In the remainder of our Introduction, we summarize the principal results of our article, beginning with two versions of the abstract {\L}ojasiewicz--Simon gradient inequality for analytic functions on Banach spaces in Section \ref{subsec:Lojasiewicz-Simon_gradient_inequality_abstract_functional} and a version of the abstract {\L}ojasiewicz--Simon gradient inequality for Morse--Bott functions on Hilbert spaces in Section \ref{subsec:Lojasiewicz-Simon_gradient_inequality_abstract_functional_Morse-Bott}. We illustrate applications of the preceding theorems with statements of {\L}ojasiewicz--Simon gradient inequalities for the harmonic map energy function in Section \ref{subsec:Lojasiewicz-Simon_gradient_inequality_harmonic_map_functional} and for the coupled Yang-Mills boson and fermion energy functions in Section \ref{subsec:Lojasiewicz-Simon_gradient_inequality_boson_fermion_Yang--Mills_function}.

\subsection{{\L}ojasiewicz--Simon gradient inequalities for analytic functions on Banach spaces}
\label{subsec:Lojasiewicz-Simon_gradient_inequality_abstract_functional}
We begin the subsection with two abstract versions of Simon's infinite-dimensional
version \cite[Theorem 3]{Simon_1983} of the {\L}ojasiewicz gradient
inequality \cite{Lojasiewicz_1965}. A slightly less general form of Theorem \ref{mainthm:Lojasiewicz-Simon_gradient_inequality_dualspace} (see Remark \ref{rmk:Embedding_hypothesis_Huang_theorem_2-4-5}) is stated by Huang as
\cite[Theorem 2.4.5]{Huang_2006} but no proof was given and it does
not follow directly from his \cite[Theorem 2.4.2(i)]{Huang_2006} (see Feehan and Maridakis \cite[Theorem E.2]{Feehan_Maridakis_Lojasiewicz-Simon_coupled_Yang-Mills_v6}). Huang cites \cite[Proposition 3.3]{Huang_Takac_2001} for the proof of his \cite[Theorem 2.4.5]{Huang_2006} but the hypotheses of \cite[Proposition 3.3]{Huang_Takac_2001} assume that $\sX$ is a Hilbert space.
The proof of Theorem \ref{mainthm:Lojasiewicz-Simon_gradient_inequality} that we include in Section \ref{sec:Lojasiewicz-Simon_gradient_inequality_analytic_and_Morse-Bott_energy_functionals} generalizes that of Feireisl and Tak{\'a}{\v{c}} for their \cite[Proposition 6.1]{Feireisl_Takac_2001} in the case of the Ginzburg-Landau energy function.

Let $\sX$ be a Banach space and let $\sX^*$ denote its continuous dual space. We call a bilinear form\footnote{Unless stated otherwise, all Banach spaces are considered to be real in this article.}, $b:\sX\times\sX \to \RR$, \emph{definite} if $b(x,x) \neq 0$ for all $x \in \sX\less\{0\}$. We say that a continuous \emph{embedding} of a Banach space into its continuous dual space, $\jmath:\sX\to\sX^*$, is \emph{definite} if the pullback of the canonical pairing, $\sX\times\sX \ni (x,y) \mapsto \langle x,\jmath(y)\rangle_{\sX\times\sX^*} \to \RR$, is a definite bilinear form.

\begin{mainthm}[{\L}ojasiewicz--Simon gradient inequality for analytic functions on Banach spaces]
\label{mainthm:Lojasiewicz-Simon_gradient_inequality_dualspace}
Let $\sX \subset \sX^*$ be a continuous, definite embedding of a Banach space into its dual space. Let $\sU \subset \sX$ be an open subset, let $\sE:\sU\to\RR$ be an analytic function, and let $x_\infty\in\sU$ be a critical point of $\sE$, that is, $\sE'(x_\infty) = 0$. Assume that $\sE''(x_\infty):\sX\to \sX^*$ is a Fredholm operator with index zero. Then there are constants $Z \in (0, \infty)$ and $\sigma \in (0,1]$ and $\theta \in [1/2,1)$ with the following significance. If $x \in \sU$ obeys
\begin{equation}
\label{eq:Lojasiewicz-Simon_gradient_inequality_neighborhood}
\|x-x_\infty\|_\sX < \sigma,
\end{equation}
then
\begin{equation}
\label{eq:Lojasiewicz-Simon_gradient_inequality_analytic_functional}
\|\sE'(x)\|_{\sX^*} \geq Z|\sE(x) - \sE(x_\infty)|^\theta.
\end{equation}
\end{mainthm}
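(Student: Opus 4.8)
The plan is to carry out a Lyapunov--Schmidt reduction onto the finite-dimensional kernel of the Hessian and then invoke the classical finite-dimensional {\L}ojasiewicz gradient inequality. After translating in domain and range we may assume $x_\infty = 0$ and $\sE(0)=0$. Write $A := \sE''(0):\sX\to\sX^*$ and $M := \sE':\sU\to\sX^*$; then $M$ is analytic with $M(0)=0$ and $DM(0)=A$, and $A$ is symmetric since it is a Hessian, $\langle u, Av\rangle = \langle v, Au\rangle$. Because $A$ is Fredholm of index zero, $K := \ker A$ is finite-dimensional, say of dimension $n$, and $R := \im A$ is closed of codimension $n$.

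First I would construct two splittings adapted to the definite embedding $\jmath$. I claim $\sX^* = \jmath(K)\oplus R$: if $\jmath(u)=Av$ for some $u\in K\setminus\{0\}$, then $\langle u,\jmath(u)\rangle = \langle u, Av\rangle = \langle v, Au\rangle = 0$ by symmetry of $A$ and $u\in\ker A$, contradicting definiteness of $\jmath$; since $\dim\jmath(K)=n=\codim R$ the direct sum follows. Let $P:\sX^*\to\sX^*$ be the projection onto $R$ along $\jmath(K)$. On the domain I set $X_1 := \{x\in\sX : \langle x,\jmath(k)\rangle = 0 \text{ for all } k\in K\}$; definiteness gives $K\cap X_1 = 0$ (if $u$ lies in both then $\langle u,\jmath(u)\rangle=0$), and a dimension count gives $\sX = K\oplus X_1$, with $A|_{X_1}:X_1\to R$ an isomorphism. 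The symmetry of $A$, definiteness of $\jmath$, and the Fredholm hypothesis are all used here, and this alignment of $X_1$ with $\jmath(K)$ is precisely what annihilates the cross terms below.

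Next comes the reduction. Since $A|_{X_1}:X_1\to R$ is an isomorphism and $M$ is analytic with $M(0)=0$, the analytic implicit function theorem yields an analytic map $\xi\mapsto y(\xi)\in X_1$ near $0\in K$ with $y(0)=0$ solving $PM(\xi+y(\xi))=0$, equivalently $M(\xi+y(\xi))\in\jmath(K)$. The reduced function $f(\xi):=\sE(\xi+y(\xi))$ is then analytic on a neighbourhood of $0$ in $K\cong\RR^n$, with $f(0)=0$ and $f'(0)=0$, so the classical {\L}ojasiewicz inequality gives $C_0>0$ and $\theta\in[1/2,1)$ with $\|f'(\xi)\|_{K^*}\ge C_0|f(\xi)|^\theta$ near $0$. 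Writing $x=\xi+z$ with $\xi\in K$, $z\in X_1$, I would then establish three perturbation estimates on a small ball. (i) Splitting $M(x)=Ax+N(x)$ with $N$ of higher order and using $PM(\xi+y(\xi))=0$ together with invertibility of $A|_{X_1}$ gives $\|z-y(\xi)\|_\sX\le C\|\sE'(x)\|_{\sX^*}$. (ii) Differentiating $f$ and using $y'(\xi)\eta\in X_1$ against $M(\xi+y(\xi))\in\jmath(K)$ kills the cross term, so $f'(\xi)\eta=\langle\eta, M(\xi+y(\xi))\rangle$; the local Lipschitz bound for $M$ and (i) then give $\|f'(\xi)\|_{K^*}\le C_1\|\sE'(x)\|_{\sX^*}$. (iii) From $\sE(x)-f(\xi)=\int_0^1\langle z-y(\xi),\,M(\xi+y(\xi)+t(z-y(\xi)))\rangle\,dt$, whose $t=0$ integrand vanishes because $z-y(\xi)\in X_1$, the Lipschitz bound and (i) give $|\sE(x)-f(\xi)|\le C_2\|\sE'(x)\|_{\sX^*}^2$.

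Finally I would combine these. By (ii) and the finite-dimensional inequality, $\|\sE'(x)\|_{\sX^*}\ge (C_0/C_1)|f(\xi)|^\theta$, while (iii) and subadditivity of $t\mapsto t^\theta$ give $|\sE(x)|^\theta\le |f(\xi)|^\theta + C_2^\theta\|\sE'(x)\|_{\sX^*}^{2\theta}$. Since $\theta\ge 1/2$, shrinking $\sigma$ so that $\|\sE'(x)\|_{\sX^*}\le 1$ on the ball $\|x\|_\sX<\sigma$ makes $\|\sE'(x)\|_{\sX^*}^{2\theta}\le\|\sE'(x)\|_{\sX^*}$, and absorbing that term yields $\|\sE'(x)\|_{\sX^*}\ge Z|\sE(x)|^\theta$ with $Z:=(C_0/C_1)/(1+(C_0/C_1)C_2^\theta)$, as required. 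I expect the main obstacle to be the construction of the adapted complements and the verification that definiteness of $\jmath$ and symmetry of the Hessian are exactly what make both the projection $P$ well defined and the cross terms in (ii) and (iii) vanish; once the splittings are correctly aligned, the remaining estimates are routine perturbation arguments, provided one checks throughout that $\sigma$ can be taken small enough that $\xi$ stays in the domains of $y(\cdot)$ and of the finite-dimensional inequality and that all higher-order and Lipschitz bounds apply.
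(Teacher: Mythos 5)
Your proposal is correct, and at the conceptual level it is the same proof as the paper's: a Lyapunov--Schmidt reduction onto the finite-dimensional kernel $K=\Ker\sE''(x_\infty)$, the finite-dimensional {\L}ojasiewicz inequality applied to the reduced function, and perturbation estimates (your (i)--(iii) correspond to the paper's \eqref{eq:aux}, \eqref{eq:Gradient_Gamma_bounded_by_gradient_E}, and \eqref{eq:EPsi_minus_GammaPi_bounded_by_gradient_E_Psi_squared}) that transfer the inequality back to $\sX$. The implementation of the reduction differs in a genuine, if modest, way. You use the classical two-sided splitting --- $\sX^*=\jmath(K)\oplus\im\sE''(x_\infty)$ and $\sX=K\oplus X_1$ with $X_1$ the annihilator of $\jmath(K)$ --- together with the analytic Implicit Function Theorem to build the graph map $\xi\mapsto\xi+y(\xi)$; the alignment of the two splittings is what makes the cross terms in your steps (ii) and (iii) vanish identically. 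The paper instead takes an \emph{arbitrary} closed complement of $K$ in $\sX^*$, perturbs the gradient map to $\Phi(x)=\sM(x)+\Pi(x-x_\infty)$, proves $D\Phi(x_\infty)$ is injective (hence invertible, by the index-zero hypothesis) using exactly the same combination of Hessian symmetry and definiteness of $\jmath$ that you use to show $\jmath(K)\cap\im\sE''(x_\infty)=0$, and obtains the reduced function as $\Gamma=\sE\circ\Psi$ restricted to $K$; there the endpoint terms do not vanish but are absorbed into the quadratic estimate. Your route buys cleaner cancellations at the price of constructing the adapted complements; the paper's route buys a single invertible map, which is what lets it pass with little change to the more general Theorems \ref{mainthm:Lojasiewicz-Simon_gradient_inequality} and \ref{mainthm:Lojasiewicz-Simon_gradient_inequality2}, where the gradient takes values in an intermediate space $\tilde\sX$ rather than in $\sX^*$. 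Your final absorption step, using $\theta\ge 1/2$ and $\|\sE'(x)\|_{\sX^*}\le 1$ on a small ball, matches the paper's concluding combination, and the caveats you flag (shrinking $\sigma$ so the $K$-component stays in the domain of $y(\cdot)$ and of the finite-dimensional inequality) are indeed the only bookkeeping required.
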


\begin{rmk}[Comments on the embedding hypothesis in Theorem \ref{mainthm:Lojasiewicz-Simon_gradient_inequality_dualspace} and comparison with Huang's Theorem]
\label{rmk:Embedding_hypothesis_Huang_theorem_2-4-5}
The hypothesis in Theorem
\ref{mainthm:Lojasiewicz-Simon_gradient_inequality_dualspace} on the
definiteness of the continuous embedding $\sX \subset \sX^*$ is easily achieved given a
continuous and dense embedding $\eps:\sX\hookrightarrow\sH$ of $\sX$ into a Hilbert space
$\sH$. Indeed, since $\langle y,\jmath(x)\rangle_{\sX\times\sX^*} =
(\eps(y),\eps(x))_\sH$ for all $x,y\in\sX$, then $\langle
x,\jmath(x)\rangle_{\sX\times\sX^*} = 0$ implies $x = 0$. The adjoint map $\eps^*:\sH^* \hookrightarrow \sX^*$ is also a continuous embedding and so the composition $\sX \subset \sH \cong \sH^*\subset \sX^*$ (where the isometric isomorphism $\sH \cong \sH^*$ is given by the Riesz map) yields the desired definite embedding $\sX\subset \sX^*$. See
\cite[Remark 3, page 136]{Brezis} or \cite[Lemma
D.1]{Feehan_Maridakis_Lojasiewicz-Simon_harmonic_maps_v5} for additional
details.
Theorem \ref{mainthm:Lojasiewicz-Simon_gradient_inequality_dualspace} generalizes Huang's \cite[Theorem 2.4.5]{Huang_2006} by replacing his hypothesis that there is a Hilbert space $\sH$ such that $\sX \subset \sH \subset \sX^*$ is a sequence of continuous embeddings with our stated hypothesis on the embedding of $\sX$.
\end{rmk}

\begin{rmk}[Index of a Fredholm Hessian operator on a reflexive Banach space]
\label{rmk:Index_Fredholm_Hessian_operator_reflexive_Banach_space}
If $\sX$ is a reflexive Banach space in Theorem \ref{mainthm:Lojasiewicz-Simon_gradient_inequality_dualspace}, then the hypothesis that $\sE''(x_\infty):\sX\to \sX^*$ has index zero can be omitted, since $\sE''(x_\infty)$ is always a symmetric operator and thus necessarily has index zero when $\sX$ is reflexive by \cite[Lemma D.3]{Feehan_Maridakis_Lojasiewicz-Simon_harmonic_maps_v5}.
\end{rmk}

\begin{rmk}[Replacement of Hilbert by Banach space dual norms in {\L}ojasiewicz--Simon gradient inequalities]
\label{rmk:Replace_Hilbert_by_Banach space dual norm}
The structure of the original result due to Simon \cite[Theorem
3]{Simon_1983} was simplified in certain applications by Feireisl and
Simondon \cite[Proposition 6.1]{Feireisl_Simondon_2000}, R\r{a}de
\cite[Proposition 7.2]{Rade_1992}, Rybka and Hoffmann \cite[Theorem
3.2]{Rybka_Hoffmann_1998}, \cite[Theorem 3.2]{Rybka_Hoffmann_1999},
and Tak{\'a}{\v{c}} \cite[Proposition 8.1]{Takac_2000} by replacing
the $L^2(M;V)$ norm used by Simon in his \cite[Theorem 3]{Simon_1983}
with dual Sobolev norms, such as $W^{-1,2}(M;V)$, and replacing the
$C^{2,\alpha}(M;V)$ H\"older norm used by Simon to define the
neighborhood of the critical point with a Sobolev $W^{1,2}(M;V)$ norm,
where $M$ is a closed Riemannian manifold and $V$ is a Riemannian
vector bundle equipped with a compatible connection. The choice $\sX =
W^{1,2}(M;V)$ in Theorem
\ref{mainthm:Lojasiewicz-Simon_gradient_inequality_dualspace} is very
convenient, but imposes constraints on the dimension of $M$ and
nonlinearity of the derivative map. The difficulties are explained
further in
\cite{Feehan_Maridakis_Lojasiewicz-Simon_coupled_Yang-Mills_v6} and
Remark~\ref{rmk:Choice_Banach_and_Hilbert_spaces_Lojasiewicz-Simon_gradient_inequality}.
\end{rmk}

As emphasized by an anonymous referee, the hypotheses of Theorem \ref{mainthm:Lojasiewicz-Simon_gradient_inequality_dualspace} are restrictive. For example, even though its hypotheses allow $\sX$ to be a Banach space, when the Hessian $\sE''(x_\infty)$ is defined by an elliptic, linear, second-order partial differential operator, then (in the notation of Remark \ref{rmk:Choice_Banach_and_Hilbert_spaces_Lojasiewicz-Simon_gradient_inequality}) one is naturally led to choose $\sX$ to be the Hilbert space $W^{1,2}(M;V)$ with dual space $\sX^* = W^{-1,2}(M;V^*)$ in order to obtain the required Fredholm property. However, such a choice could make it impossible to simultaneously obtain the required real analyticity of the function $\sE:\sX \supset \sU \to \RR$. As explained in Remark \ref{rmk:Choice_Banach_and_Hilbert_spaces_Lojasiewicz-Simon_gradient_inequality}, the forthcoming generalization greatly relaxes these constraints and implies Theorem \ref{mainthm:Lojasiewicz-Simon_gradient_inequality_dualspace} as a corollary. We first recall the concept of a gradient map \cite[Section 2.1B]{Huang_2006}, \cite[Section 2.5]{Berger_1977}.

\begin{defn}[Gradient map]
\label{defn:Huang_2-1-1}
(See \cite[Definition 2.1.1]{Huang_2006}.)
Let $\sU\subset \sX$ be an open subset of a Banach space $\sX$ and let $\tilde\sX$ be a Banach space with continuous embedding $\tilde\sX \subseteqq \sX^*$. A continuous map $\sM:\sU\to \tilde\sX$ is called a \emph{gradient map} if there exists a $C^1$ function $\sE:\sU\to\RR$ such that
\begin{equation}
\label{eq:Differential_and_gradient_maps}
\sE'(x)v = \langle v,\sM(x)\rangle_{\sX\times\sX^*}, \quad \forall\, x \in \sU, \quad v \in \sX,
\end{equation}
where $\langle \cdot , \cdot \rangle_{\sX\times\sX^*}$ is the canonical bilinear form on $\sX\times\sX^*$. The real-valued function $\sE$ is called a \emph{potential} for the gradient map, $\sM$.
\end{defn}

When $\tilde\sX = \sX^*$ in Definition \ref{defn:Huang_2-1-1}, then the derivative and gradient maps coincide.

\begin{mainthm}[Refined {\L}ojasiewicz--Simon gradient inequality for analytic functions on Banach spaces]
\label{mainthm:Lojasiewicz-Simon_gradient_inequality}
Let $\sX$ and $\tilde\sX$ be Banach spaces with continuous embeddings
\[
  \sX \subset \tilde\sX \subset \sX^*
\]
and such that the embedding $\sX \subset \sX^*$ is definite. Let $\sU \subset \sX$ be an open subset, $\sE:\sU\to\RR$ be a $C^2$ function with real analytic gradient map $\sM:\sU\to\tilde\sX$, and $x_\infty\in\sU$ be a critical point of $\sE$, that is, $\sM(x_\infty) = 0$. If $\sM'(x_\infty):\sX\to \tilde\sX$ is a Fredholm operator with index zero, then there are constants $Z \in (0,\infty)$ and $\sigma \in (0,1]$ and $\theta \in [1/2, 1)$ with the following significance. If $x \in \sU$ obeys
\begin{equation}
\label{eq:Lojasiewicz-Simon_gradient_inequality_neighborhood_general}
\|x-x_\infty\|_\sX < \sigma,
\end{equation}
then
\begin{equation}
\label{eq:Lojasiewicz-Simon_gradient_inequality_analytic_functional_general}
\|\sM(x)\|_{\tilde\sX} \geq Z|\sE(x) - \sE(x_\infty)|^\theta.
\end{equation}
\end{mainthm}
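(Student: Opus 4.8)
The plan is to carry out a Lyapunov--Schmidt reduction onto the finite-dimensional kernel of the Hessian $A := \sM'(x_\infty)\colon \sX \to \tilde\sX$, apply the classical finite-dimensional {\L}ojasiewicz gradient inequality there, and transfer the resulting estimate back to the norms of $\sX$ and $\tilde\sX$. After translating so that $x_\infty = 0$ and $\sE(x_\infty) = 0$, the first step is to build the two compatible splittings the reduction requires. Because $\sM$ is a gradient map, $A = \sE''(0)$ is symmetric for the canonical pairing, so $\langle k, Av\rangle_{\sX\times\sX^*} = \langle v, Ak\rangle_{\sX\times\sX^*} = 0$ for every $k \in \Ker A$ and $v \in \sX$; combined with the \emph{definiteness} of the embedding $\sX \subset \sX^*$ this forces $\Ker A \cap \ran A = \{0\}$, and since $A$ has index zero we obtain $\tilde\sX = \Ker A \oplus \ran A$. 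The same definiteness makes the form $k \mapsto \langle k, k\rangle_{\sX\times\sX^*}$ nondegenerate on the finite-dimensional space $\Ker A$, so setting $\sX_2 := \{s \in \sX : \langle s, k\rangle_{\sX\times\sX^*} = 0 \ \forall k \in \Ker A\}$ yields $\sX = \Ker A \oplus \sX_2$, with associated projection $P\colon \sX \to \Ker A$. Let $\Pi\colon \tilde\sX \to \Ker A$ be the projection along $\ran A$.

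Next I would perform the reduction. The equation $(I-\Pi)\sM(\kappa + \eta) = 0$ for $\eta \in \sX_2$ has $\eta$-derivative $(I-\Pi)A|_{\sX_2}\colon \sX_2 \to \ran A$ at the origin, which is a Banach-space isomorphism; since $\sM$ is real analytic, the analytic implicit function theorem produces a real-analytic map $\phi\colon B_{\Ker A}(0,r) \to \sX_2$ with $\phi(0)=0$ and $(I-\Pi)\sM(\Gamma(\kappa)) = 0$, where $\Gamma(\kappa) := \kappa + \phi(\kappa)$. Define the reduced function $f(\kappa) := \sE(\Gamma(\kappa))$ on a ball in $\Ker A$. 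Using $f'(\kappa)\zeta = \langle \Gamma'(\kappa)\zeta, \sM(\Gamma(\kappa))\rangle_{\sX\times\sX^*}$, the vanishing of the range component of $\sM(\Gamma(\kappa))$, and the orthogonality of $\sX_2$ and $\Ker A$ built into the definition of $\sX_2$, one checks that $f$ is $C^1$ with real-analytic gradient represented by $\Pi\sM(\Gamma(\kappa)) \in \Ker A$; hence $f$ itself is real analytic. The classical {\L}ojasiewicz gradient inequality in finite dimensions then supplies $\theta \in [1/2,1)$ and $C_0 > 0$ with $\|\Pi\sM(\Gamma(\kappa))\| \geq C_0 |f(\kappa) - f(0)|^\theta$ for $\kappa$ near $0$.

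The final step is the transfer. For $x$ near $0$ write $\kappa := Px \in \Ker A$ and $s := x - \Gamma(\kappa) \in \sX_2$. Since $(I-\Pi)\sM(\Gamma(\kappa)) = 0$ and $(I-\Pi)A|_{\sX_2}$ is invertible, a first-order Taylor expansion of $\sM$ about $\Gamma(\kappa)$ gives $\|s\|_\sX \leq C\|(I-\Pi)\sM(x)\|_{\tilde\sX} \leq C\|\sM(x)\|_{\tilde\sX}$ after shrinking $\sigma$; likewise $\Pi\sM(x) = \Pi\sM(\Gamma(\kappa)) + O(\|s\|_\sX)$ yields $\|\Pi\sM(\Gamma(\kappa))\| \leq C'\|\sM(x)\|_{\tilde\sX}$. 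Because the linear term $\langle s, \Pi\sM(\Gamma(\kappa))\rangle_{\sX\times\sX^*}$ vanishes by the orthogonality of $\sX_2$ and $\Ker A$, integrating $\sE'$ along the segment from $\Gamma(\kappa)$ to $x$ bounds the energy difference quadratically, $|\sE(x) - \sE(\Gamma(\kappa))| \leq C\|s\|_\sX^2$, whence $|\sE(x) - \sE(0)| \leq |f(\kappa)-f(0)| + C\|s\|_\sX^2 \leq C_1\|\sM(x)\|_{\tilde\sX}^{1/\theta} + C_2\|\sM(x)\|_{\tilde\sX}^2$. Since $1/\theta \leq 2$ and $\|\sM(x)\|_{\tilde\sX}$ is small for $x$ near $0$, the first term dominates, and rearranging gives \eqref{eq:Lojasiewicz-Simon_gradient_inequality_analytic_functional_general} with the $\theta$ produced by the finite-dimensional inequality.

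I expect the main obstacle to be the careful construction of the compatible pair of decompositions $\sX = \Ker A \oplus \sX_2$ and $\tilde\sX = \Ker A \oplus \ran A$ in the non-Hilbert, three-space setting, together with verifying that the cross terms genuinely cancel, so that the reduced gradient equals $\Pi\sM\circ\Gamma$ and the linear term in the energy expansion vanishes. This is precisely where the definiteness of the embedding $\sX \subset \sX^*$ and the symmetry of the gradient Hessian are indispensable, and where the argument must depart from the Hilbert-space treatments in \cite[Theorem 2.4.5]{Huang_2006} and \cite[Proposition 6.1]{Feireisl_Takac_2001}. A secondary technical point is that $\sE$ is assumed only $C^2$, so all analyticity must be routed through the real-analytic gradient map $\sM$ and the analytic reduced potential $f$, rather than assumed for $\sE$ on the infinite-dimensional space directly.
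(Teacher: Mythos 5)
Your proof is correct, and although it shares the paper's overall strategy --- split off the finite-dimensional kernel of $\sM'(x_\infty)$ by Lyapunov--Schmidt, apply the finite-dimensional {\L}ojasiewicz inequality (Theorem \ref{thm:Huang_2-3-1}) to the reduced function, and transfer back with a quadratic bound on the off-kernel part of the energy --- the implementation is genuinely different. The paper (Lemma \ref{lem:laux}, Proposition \ref{prop:LSprop}) takes a \emph{generic} closed complement of $K=\Ker\sM'(x_\infty)$ in $\sX^*$, inverts the full map $\Phi=\sM+\Pi(\cdot-x_\infty)$ by the inverse function theorem, and routes everything through the single estimate \eqref{eq:aux}, $\|\Psi(\Pi\alpha)-\Psi(\alpha)\|_\sX\le C\|\sM(\Psi(\alpha))\|_{\tilde\sX}$; no compatibility between complements is needed, and the quadratic energy bound arises as a product of two quantities each controlled by $\|\sM(\Psi(\alpha))\|_{\tilde\sX}$. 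You instead solve the range equation $(I-\Pi)\sM(\kappa+\eta)=0$ by the analytic implicit function theorem and choose the complements adapted to the duality pairing --- $\ran\sM'(x_\infty)$ in $\tilde\sX$ and the annihilator $\sX_2$ of $\Ker\sM'(x_\infty)$ in $\sX$ --- both of which exist for exactly the reason the paper's $D\Phi(x_\infty)$ is injective, namely symmetry of the Hessian plus definiteness of $\sX\subset\sX^*$. This makes the cross terms vanish identically, so the reduced gradient is literally $\Pi\sM\circ\Gamma$ and the linear term in the energy expansion along $s\in\sX_2$ drops out, yielding the quadratic bound directly. Your route buys transparency (the reduction looks like the textbook finite-dimensional one and the cancellations are visible); it costs the extra verification that $\sX_2$ is a closed complement and that the two different projections $P$ on $\sX$ and $\Pi$ on $\tilde\sX$ interact correctly in the transfer step, which you do carry out. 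Two minor points: the finite-dimensional inequality a priori gives $\theta\in(0,1)$, and one upgrades to $\theta\in[1/2,1)$ by shrinking the neighborhood so that $|f-f(0)|\le 1$; and the analyticity of the reduced function can be obtained either by integrating its analytic gradient, as you do, or by composing the analytic potential $\sE$ (Proposition \ref{prop:Huang_2-1-2}) with the analytic graph map.
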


\begin{rmk}[Comments on the embedding hypothesis in Theorem \ref{mainthm:Lojasiewicz-Simon_gradient_inequality}]
\label{rmk:Embedding_hypothesis_Lojasiewicz-Simon_gradient_inequality}
As explained in Remark \ref{rmk:Embedding_hypothesis_Huang_theorem_2-4-5}, the hypothesis in Theorem
\ref{mainthm:Lojasiewicz-Simon_gradient_inequality} on the
definiteness of the continuous embedding $\sX \subset \sX^*$ is easily achieved given a
continuous and dense embedding of $\sX$ into a Hilbert space $\sH$. The composition of the continuous embeddings $\sX\subset \tilde\sX\subset \sX^*$ in Theorem \ref{mainthm:Lojasiewicz-Simon_gradient_inequality} induces the same embedding $\sX\subset \sX^*$ as that constructed in Remark \ref{rmk:Embedding_hypothesis_Huang_theorem_2-4-5}. 
\end{rmk}  

\begin{rmk}[Previous versions of the {\L}ojasiewicz--Simon gradient inequality for analytic functions on Banach spaces]
\label{rmk:Previous_abstract_Lojasiewicz-Simon_gradient_inequalities}
The results \cite[Theorem 3.10 and Corollary 3.11]{Chill_2003} and \cite[Corollary 3]{Chill_2006} due to Chill provide versions of the {\L}ojasiewicz--Simon gradient inequality for a $C^2$ function on a Banach space that overlap with Theorem \ref{mainthm:Lojasiewicz-Simon_gradient_inequality}; see
\cite[Proposition 3.12]{Carlotto_Chodosh_Rubinstein_2015} for a nice exposition of Chill's version \cite{Chill_2003} of the abstract {\L}ojasiewicz--Simon gradient inequality. However, the hypotheses of Theorem  \ref{mainthm:Lojasiewicz-Simon_gradient_inequality} (for analytic gradient maps) and the forthcoming Theorem \ref{mainthm:Optimal_Lojasiewicz-Simon_gradient_inequality_Morse-Bott_energy_functional} (for Morse--Bott functions) are simpler and easier to verify in many applications.

The result \cite[Theorem 4.1]{Haraux_Jendoubi_2011} due to Haraux and Jendoubi is an abstract {\L}ojasiewicz--Simon gradient inequality which they argue is optimal based on examples that they discuss in \cite[Section 3]{Haraux_Jendoubi_2011}. However, while the hypothesis in Theorem \ref{mainthm:Lojasiewicz-Simon_gradient_inequality} is replaced by their alternative requirements that $\Ker\sE''(x_\infty)$ be finite-dimensional and $\sE''(x_\infty)$ obey a certain coercivity condition on the orthogonal complement of $\Ker\sE''(x_\infty)$, they require $\sX$ to be a Hilbert space.

Theorem \ref{mainthm:Lojasiewicz-Simon_gradient_inequality} also considerably strengthens and simplifies \cite[Theorem 2.4.2(i)]{Huang_2006} (see Feehan and Maridakis \cite[Theorem E.2]{Feehan_Maridakis_Lojasiewicz-Simon_coupled_Yang-Mills_v6}).
\end{rmk}

\begin{rmk}[On the choice of Banach spaces in applications of Theorem \ref{mainthm:Lojasiewicz-Simon_gradient_inequality}]
\label{rmk:Choice_Banach_and_Hilbert_spaces_Lojasiewicz-Simon_gradient_inequality}
The hypotheses of Theorem \ref{mainthm:Lojasiewicz-Simon_gradient_inequality} are designed to give the most flexibility in applications of a {\L}ojasiewicz--Simon gradient inequality to analytic functions on Banach spaces. An example of a convenient choice of Banach spaces modeled as Sobolev spaces, when $\sM'(x_\infty)$ is realized as an elliptic partial differential operator of order $m$, would be
\[
\sX = W^{k,p}(M;V), \quad \tilde\sX = W^{k-m,p}(M;V), \quad\text{and}\quad \sX^* = W^{-k,p'}(M;V),
\]
where $k\in\ZZ$ is an integer, $p \in (1,\infty)$ is a constant with dual H\"older exponent $p'\in(1,\infty)$ defined by $1/p+1/p'=1$, while $M$ is a closed Riemannian manifold of dimension $d\geq 2$ and $V$ is a Riemannian vector bundle with a compatible covariant derivative
\[
  \nabla:C^\infty(M;V) \to C^\infty(M;T^*X\otimes V),
\]
and $W^{k,p}(M;V)$ denote Sobolev spaces defined in the standard way \cite{Aubin_1998}. When the integer $k$ is chosen to be large enough, the verification of analyticity of the gradient map $\sM:\sU\to\tilde\sX$ is straightforward. Normally, that is the case when $k\geq m+1$ and $(k-m)p>d$ or $k-m=d$ and $p=1$, since $W^{k-m,p}(M;\CC)$ is then a Banach algebra by \cite[Theorem 4.39]{AdamsFournier}.
\end{rmk}

Theorem \ref{mainthm:Lojasiewicz-Simon_gradient_inequality} appears to us to be the most widely applicable abstract version of the {\L}ojasiewicz--Simon gradient inequality that we are aware of in the literature. However, for applications where $\sM'(x_\infty)$ is realized as an elliptic partial differential operator of \emph{even} order, $m=2n$, and the nonlinearity of the gradient map is sufficiently mild, it often suffices to choose $\sX$ to be the Banach space $W^{n,2}(M;V)$ and choose $\tilde\sX = \sX^*$ to be the Banach space $W^{-n,2}(M;V)$. The distinction between the derivative $\sE'(x) \in \sX^*$ and the gradient $\sM(x) \in \tilde\sX$ then disappears. Similarly, the distinction between the Hessian $\sE''(x_\infty) \in (\sX\times\sX)^*$ and the Hessian operator $\sM'(x_\infty) \in \sL(\sX,\tilde\sX)$ disappears. Finally, if $\sE:\sX\supset\sU \to \RR$ is real analytic, then the simpler Theorem \ref{mainthm:Lojasiewicz-Simon_gradient_inequality_dualspace} is often adequate for applications.

\subsection{Generalized {\L}ojasiewicz--Simon gradient inequalities for analytic functions on Banach spaces and gradient maps valued in Hilbert spaces}
\label{subsec:Lojasiewicz-Simon_gradient_inequalities_Hilbert_space_introduction}
While Theorem \ref{mainthm:Lojasiewicz-Simon_gradient_inequality} has important applications to proofs of global existence, convergence, convergence rates, and stability of gradient flows defined by an energy function $\sE:\sX\supset \sU \to \RR$ with gradient map $\sM:\sX\supset \sU \to \tilde\sX$ (see \cite[Section 2.1]{Feehan_yang_mills_gradient_flow_v4} for an introduction and Simon \cite{Simon_1983} for his pioneering development), the gradient inequality \eqref{eq:Lojasiewicz-Simon_gradient_inequality_analytic_functional_general} is most useful when it has the form
\[
\|\sM(x)\|_{\sH} \geq Z|\sE(x) - \sE(x_\infty)|^\theta, \quad\forall\, x \in \sU \text{ with } \|x-x_\infty\|_\sX < \sigma,
\]
where $\sH$ is a Hilbert space and the Banach space $\sX$ is a dense subspace of $\sH$ with continuous embedding $\sX \subset \sH$, and so $\sH^* \subset \sX^*$ is also a continuous embedding. Thus, we have $\sX \subset \sH \cong \sH^* \subset \sX^*$ and $(\sX,\sH,\sX^*)$ is\footnote{Though we do not necessarily require $\sX$ to be reflexive.} an ``evolution triple'' (see \cite[Remark 3, p. 136]{Brezis} or \cite[Definition 3.4.3]{Denkowski_Migorski_Papageorgiou_intro_nonlinear_analysis_applications}) and $\sH$ is called the ``pivot space''.

For example, to obtain Theorem \ref{mainthm:Lojasiewicz-Simon_gradient_inequality_energy_functional_Riemannian_manifolds} for the harmonic map energy function, we choose
\[
\sX = W^{k,p}(M;f_\infty^*TN),
\]
but for applications to gradient flow, we would like to replace the gradient inequality \eqref{eq:Lojasiewicz-Simon_gradient_inequality_harmonic_map_energy_functional_Riemannian_manifold} by
\[
\|\sM(f)\|_{L^2(M;f^*TN)}
\geq
Z|\sE(f) - \sE(f_\infty)|^\theta,
\]
under the original {\L}ojasiewicz--Simon neighborhood condition \eqref{eq:Lojasiewicz-Simon_gradient_inequality_harmonic_map_neighborhood_Riemannian_manifold},
\[
\|f - f_\infty\|_{W^{k,p}(M)} < \sigma.
\]
Unfortunately, such an $L^2$ gradient inequality
(or Simon's \cite[Theorem 3]{Simon_1983}, \cite[Equation (4.27)]{Simon_1985}) does not follow from Theorem \ref{mainthm:Lojasiewicz-Simon_gradient_inequality} when $M$ has dimension $d \geq 4$, as explained in the proof of Corollary \ref{maincor:Lojasiewicz-Simon_gradient_inequality_energy_functional_Riemannian_manifolds_L2} and Remark \ref{rmk:Lojasiewicz-Simon_gradient_inequality_energy_functional_manifolds_L2_exclude_d_geq_4}; see also \cite{Feehan_harmonic_map_relative_energy_gap}. However, these $L^2$ gradient inequalities \emph{are} implied by the forthcoming Theorem \ref{mainthm:Lojasiewicz-Simon_gradient_inequality2}, which generalizes and simplifies Huang's \cite[Theorem 2.4.2 (i)]{Huang_2006} (see Feehan and Maridakis \cite[Theorem E.2]{Feehan_Maridakis_Lojasiewicz-Simon_coupled_Yang-Mills_v6}).

\begin{mainthm}[Generalized {\L}ojasiewicz--Simon gradient inequality for analytic functions on Banach spaces]
\label{mainthm:Lojasiewicz-Simon_gradient_inequality2}
Let $\sX$ and $\tilde\sX$ be Banach spaces with continuous embeddings
\[
  \sX \subset \tilde\sX \subset \sX^*
\]
and such that the embedding $\sX \subset \sX^*$ is definite.  Let $\sU \subset \sX$ be an open subset, $\sE:\sU\to\RR$ be an analytic function, and $x_\infty\in\sU$ be a critical point of $\sE$, that is, $\sE'(x_\infty) = 0$. Let
\[
\sX\subset \sG \subset \tilde\sG \quad \text{and} \quad \tilde\sX \subset \tilde\sG \subset \sX^*
\]
be continuous embeddings of Banach spaces such that the compositions
\[
\sX\subset \sG\subset \tilde\sG \quad \text{and}\quad \sX\subset \tilde\sX\subset \tilde\sG,
\]
induce the same embedding, $\sX \subset \tilde\sG$. Let $\sM:\sU\to\tilde\sX$ be a gradient map for $\sE$ in the sense of Definition \ref{defn:Huang_2-1-1}. Suppose that for each $x \in \sU$, the bounded, linear operator
\[
\sM'(x): \sX \to \tilde \sX
\]
has an extension
\[
\sM_1(x): \sG \to \tilde\sG
\]
such that the map
\[
\sU \ni x \mapsto \sM_1(x) \in \sL(\sG, \tilde\sG) \quad\hbox{is continuous}.
\]
If $\sM'(x_\infty):\sX\to \tilde\sX$ and $\sM_1(x_\infty):\sG\to \tilde\sG$ are Fredholm operators with index zero, then there are constants $Z \in (0,\infty)$ and $\sigma \in (0,1]$ and $\theta \in [1/2, 1)$ with the following significance. If $x \in \sU$ obeys
\begin{equation}
\label{eq:Lojasiewicz-Simon_gradient_inequality_neighborhood_general2}
\|x-x_\infty\|_\sX < \sigma,
\end{equation}
then
\begin{equation}
\label{eq:Lojasiewicz-Simon_gradient_inequality_analytic_functional_general2}
\|\sM(x)\|_{\tilde\sG} \geq Z|\sE(x) - \sE(x_\infty)|^\theta.
\end{equation}
\end{mainthm}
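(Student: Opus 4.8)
The plan is to follow the Lyapunov--Schmidt reduction underlying the proof of Theorem~\ref{mainthm:Lojasiewicz-Simon_gradient_inequality}, reducing the infinite-dimensional inequality to the classical finite-dimensional {\L}ojasiewicz inequality for $\sE$ restricted to the finite-dimensional kernel of the Hessian, but carrying the \emph{weaker} target norm $\|\cdot\|_{\tilde\sG}$ through every estimate. The essential new ingredient, compared with Theorem~\ref{mainthm:Lojasiewicz-Simon_gradient_inequality}, is that the coercivity used to control the transverse (infinite-dimensional) directions must be extracted from $L_1 := \sM_1(x_\infty)\colon \sG\to\tilde\sG$ rather than from $L := \sM'(x_\infty)\colon \sX\to\tilde\sX$. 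Note that one cannot simply apply Theorem~\ref{mainthm:Lojasiewicz-Simon_gradient_inequality} with $\tilde\sX$ replaced by $\tilde\sG$: the composite $\sX \xrightarrow{L} \tilde\sX \hookrightarrow \tilde\sG$ typically fails to have closed range, so it is precisely the \emph{enlargement} of the domain from $\sX$ to $\sG$ that restores the Fredholm property, and this substitution is what upgrades the conclusion from a lower bound on $\|\sM(x)\|_{\tilde\sX}$ to the sharper lower bound on $\|\sM(x)\|_{\tilde\sG}$.

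First I would record the compatibility of the two Hessians. Because $L_1$ extends $L$ through the embeddings $\sX\subset\sG$ and $\tilde\sX\subset\tilde\sG$, the inclusion $\Ker L \subseteq \Ker L_1$ is immediate, and the substantive preliminary step is the reverse inclusion $\Ker L_1 \subseteq \sX$, yielding a common finite-dimensional kernel $K := \Ker L = \Ker L_1 \subset \sX$; this is an abstract analogue of elliptic regularity, which I expect to deduce from the index-zero Fredholm property of \emph{both} operators together with the hypothesis that the embeddings $\sX\subset\sG\subset\tilde\sG$ and $\sX\subset\tilde\sX\subset\tilde\sG$ induce the same embedding $\sX\subset\tilde\sG$. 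Using the definite pairing $\sX\subset\sX^*$, which (by path-connectedness of $\sX\less\{0\}$) is sign-definite and hence a weak inner product, I would build a projection $\Pi$ onto $K$ that is simultaneously bounded on $\tilde\sX$, on $\tilde\sG$, and on $\sX^*$; since $K$ is finite-dimensional and spanned by elements of $\sX$, all norms agree up to constants on the range $\Pi(\tilde\sX)=\Pi(\tilde\sG)=K$. A companion projection onto a complement of $\Ran L_1$ in $\tilde\sG$ is constructed the same way.

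Next I would carry out the reduction in the \emph{strong} spaces $(\sX,\tilde\sX)$, where analyticity is available. Applying the analytic implicit function theorem to the transverse equation $(I-Q)\sM(x_\infty+\xi+\eta)=0$ for $\eta$ in terms of $\xi \in K$ --- solvable because $L$ is an isomorphism from a complement of $K$ in $\sX$ onto $\Ran L$ --- yields an analytic map $\xi\mapsto\eta(\xi)$ and an analytic reduced function $\Gamma(\xi):=\sE(x_\infty+\xi+\eta(\xi))$ on a neighborhood of $0$ in $K$, to which the classical finite-dimensional {\L}ojasiewicz inequality applies:
\[
\|\nabla\Gamma(\xi)\| \geq c\,|\Gamma(\xi)-\Gamma(0)|^\theta, \qquad \theta\in[1/2,1).
\]
Writing $x-x_\infty = \xi + \eta$ with $\xi=\Pi(x-x_\infty)$, I would then combine the gradient relation \eqref{eq:Differential_and_gradient_maps} with a fundamental-theorem-of-calculus expansion of $\sE$ and $\sM$ along $[x_\infty,x]$ to obtain the three standard comparisons of the Feireisl--Tak{\'a}{\v{c}} scheme: $\Pi\sM(x)$ matches $\nabla\Gamma(\xi)$ modulo terms controlled by the transverse defect $\|\eta-\eta(\xi)\|_{\sG}$, and $\sE(x)-\sE(x_\infty) = \Gamma(\xi)-\Gamma(0) + O(\|\eta-\eta(\xi)\|_{\sG}^2)$.

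The heart of the matter --- the step I expect to be the main obstacle --- is to bound both pieces of $\sM(x)=\Pi\sM(x)+(I-\Pi)\sM(x)$ below in the \emph{weak} norm $\tilde\sG$. The projected part is harmless: $\Pi\sM(x)\in K$, on which $\|\cdot\|_{\tilde\sG}$ and $\|\cdot\|_{\tilde\sX}$ are equivalent. The transverse part is where the hypotheses on $\sG,\tilde\sG,\sM_1$ are indispensable: from $\sM(x)=\sM(x)-\sM(x_\infty)=\int_0^1 \sM_1\bigl(x_\infty+t(x-x_\infty)\bigr)(x-x_\infty)\,dt$ in $\tilde\sG$, the continuity of $x\mapsto\sM_1(x)\in\sL(\sG,\tilde\sG)$ and the smallness $\|x-x_\infty\|_\sX<\sigma$ give $\sM(x)=L_1(x-x_\infty)+(\text{small})$, whence the coercivity $\|L_1 v\|_{\tilde\sG}\geq c_1\|v\|_{\sG}$ for $v$ transverse to $K$ --- valid precisely because $\Ker L_1=K$ and $L_1$ is Fredholm of index zero --- yields $\|(I-\Pi)\sM(x)\|_{\tilde\sG}\gtrsim\|\eta-\eta(\xi)\|_{\sG}$. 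Feeding this and the finite-dimensional inequality into the comparisons, and absorbing the quadratic remainder using $\theta\geq 1/2$ and smallness of $\sigma$, produces \eqref{eq:Lojasiewicz-Simon_gradient_inequality_analytic_functional_general2}. The genuinely delicate point is that the reduction furnishing $\Gamma$ is performed in the $\sX$-topology (for analyticity) while the coercivity lives only in the weaker $\sG$-topology; reconciling the two --- by shrinking $\sigma$ and exploiting that $\eta(\xi)$ is higher order so that the $\sX$- and $\sG$-measurements of the transverse defect are comparable on the relevant scale --- rather than any single inequality, is the crux of the argument.
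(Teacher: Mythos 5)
Your overall architecture is the paper's: a Lyapunov--Schmidt reduction performed in the strong pair $(\sX,\tilde\sX)$ where analyticity lives, the finite-dimensional {\L}ojasiewicz inequality applied to the reduced function $\Gamma$ on the kernel $K$, and the transverse directions controlled by coercivity extracted from the \emph{extended} operator $\sM_1(x_\infty)$ so that the final lower bound lands in the $\tilde\sG$-norm. The only presentational difference is that you set up the reduction by solving $(I-Q)\sM(x_\infty+\xi+\eta)=0$ via the implicit function theorem, whereas the paper inverts the augmented map $\Phi(x)=\sM(x)+\Pi(x-x_\infty)$ globally (Lemmas \ref{lem:inv_ext} and \ref{lem:laux2}) and defines $\Gamma=\sE\circ\Psi$ on $K\cap V_0$; your pointwise coercivity $\|L_1 v\|_{\tilde\sG}\geq c_1\|v\|_\sG$ transverse to $K$ is exactly the integrated estimate $\|\Psi(\Pi\alpha)-\Psi(\alpha)\|_\sG\leq C\|\sM(\Psi(\alpha))\|_{\tilde\sG}$ of \eqref{eq:aux2}, obtained there from the continuously varying inverse $\sT(x)^{-1}=(\sM_1(x)+\Pi)^{-1}\in\sL(\tilde\sG,\sG)$. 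These two bookkeeping schemes buy the same thing, and your remark that the enlargement $\sX\rightsquigarrow\sG$ is what restores closed range in the weak target is the right way to see why Theorem \ref{mainthm:Lojasiewicz-Simon_gradient_inequality} cannot simply be invoked with $\tilde\sX$ replaced by $\tilde\sG$.

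The one step whose stated justification does not work is the kernel identification $\Ker\sM_1(x_\infty)\subseteq\sX$, which you propose to ``deduce from the index-zero Fredholm property of both operators together with the hypothesis that the embeddings induce the same embedding $\sX\subset\tilde\sG$.'' Those hypotheses alone are insufficient: take $L:\sX\to\tilde\sX$ an isomorphism, set $\sG:=\sX\times\RR$ and $\tilde\sG:=\tilde\sX\times\RR$ with the obvious compatible embeddings $x\mapsto(x,0)$, and let $L_1(x,t):=(Lx,0)$. Then $L_1$ extends $L$, both operators are Fredholm of index zero, yet $\Ker L_1=\{0\}\times\RR$ meets $\sX$ only in $0$. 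What actually forces $\Ker\sM_1(x_\infty)=K$ is the same pair of ingredients used at the $\sX$-level in Lemma \ref{lem:laux} --- symmetry of the Hessian and definiteness of the embedding $\sX\subset\sX^*$ --- transplanted to the pair $(\sG,\tilde\sG)$: one shows directly that $\sM_1(x_\infty)+\Pi:\sG\to\tilde\sG$ is injective, hence invertible by index zero, and invertibility of this operator is equivalent to $\Ker\sM_1(x_\infty)=K$. (Even this requires a word of care, since the symmetry $\langle y,\sM_1(x_\infty)v\rangle_{\sX\times\sX^*}=0$ for $y\in K$ and $v\in\sG\less\sX$ is not literally an instance of \eqref{eq:symmetry}; in practice one uses density of $\sX$ in $\sG$ or the fact that the extension inherits symmetry.) So you should route this preliminary step through the definiteness argument rather than through Fredholm index bookkeeping; with that repair, the rest of your outline goes through as you describe.
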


Suppose now that $\tilde\sG = \sH$, a Hilbert space, so that the embedding $\sG\subset \sH$  in Theorem \ref{mainthm:Lojasiewicz-Simon_gradient_inequality2}, factors through $\sG\subset \sH \cong \sH^*$ and therefore
\[
\sE'(x)v = \langle v, \sM(x) \rangle_{\sX\times\sX^*} = (v, \sM(x))_\sH, \quad\forall\, x \in \sU \text{ and } v \in \sX,
\]
using the continuous embeddings $\tilde\sX \subset \sH \subset \sX^*$. As we noted in Remark \ref{rmk:Embedding_hypothesis_Huang_theorem_2-4-5}, the hypothesis in Theorem \ref{mainthm:Lojasiewicz-Simon_gradient_inequality2} that the embedding $\sX \subset \sX^*$ is definite is implied by the assumption that $\sX \subset \sH$ is a continuous embedding into a Hilbert space. By Theorem \ref{mainthm:Lojasiewicz-Simon_gradient_inequality2}, if $x \in \sU$ obeys
\begin{equation}
\label{eq:Lojasiewicz-Simon_gradient_inequality_neighborhood_general_Hilbert_space}
\|x-x_\infty\|_\sX < \sigma,
\end{equation}
then
\begin{equation}
\label{eq:Lojasiewicz-Simon_gradient_inequality_analytic_functional_Hilbert_space}
\|\sM(x)\|_{\sH} \geq Z|\sE(x) - \sE(x_\infty)|^\theta,
\end{equation}
as desired.

\begin{rmk}
If the Banach spaces are instead modeled as H\"older spaces, as in Simon \cite{Simon_1983}, a convenient choice of Banach spaces would be
\[
\sX = C^{k,\alpha}(M;V), \quad \tilde\sX = C^{k-m,\alpha}(M;V), \quad\text{and}\quad \sH = L^2(M;V),
\]
where $\alpha \in (0,1)$ and $k\geq m$, and these H\"older spaces are defined in the standard way \cite{Aubin_1998}.
\end{rmk}

\begin{rmk}[Combination of Theorems \ref{mainthm:Lojasiewicz-Simon_gradient_inequality} and \ref{mainthm:Lojasiewicz-Simon_gradient_inequality2}]
\label{rmk:Lojasiewicz-Simon_gradient_inequality_combined}
Naturally, it is possible to combine Theorems \ref{mainthm:Lojasiewicz-Simon_gradient_inequality} and \ref{mainthm:Lojasiewicz-Simon_gradient_inequality2} and their proofs; however, we avoid doing so for reasons of clarity and exposition.
\end{rmk}

\subsection{{\L}ojasiewicz--Simon gradient inequalities for Morse--Bott functions on Banach spaces}
\label{subsec:Lojasiewicz-Simon_gradient_inequality_abstract_functional_Morse-Bott}
It is of considerable interest to know when the optimal exponent $\theta = 1/2$ is achieved, since in that case one can prove (see \cite[Theorem 24.21]{Feehan_yang_mills_gradient_flow_v4}, for example) that a global solution $u:[0,\infty)\to\sX$ to a gradient system governed by the {\L}ojasiewicz--Simon gradient inequality,
\[
\frac{du}{dt} = -\sE'(u(t)), \quad u(0) = u_0,
\]
has \emph{exponential} rather than mere power-law rate of convergence
to a critical point $u_\infty$. One simple version of such an
optimal {\L}ojasiewicz--Simon gradient inequality is provided in Huang
\cite[Proposition 2.7.1]{Huang_2006}. Although interesting, its
hypotheses are very restrictive and the result is a special case of Theorem
\ref{mainthm:Lojasiewicz-Simon_gradient_inequality_dualspace}, where $\sX$ is assumed to be a Hilbert space and the Hessian $\sE''(x_\infty):\sX\to \sX^*$ is an invertible operator. See Haraux, Jendoubi, and Kavian \cite[Proposition 1.1]{Haraux_Jendoubi_Kavian_2003} for a similar result.

For the harmonic map energy function, a more interesting optimal {\L}ojasiewicz--Simon-type gradient inequality,
\[
\|\sE'(f)\|_{L^p(S^2)} \geq Z|\sE(f) - \sE(f_\infty)|^{1/2},
\]
has been obtained by Kwon \cite[Theorem 4.2]{KwonThesis} for maps $f:S^2\to N$, where $N$ is a closed Riemannian manifold and $f$ is close to a harmonic map $f_\infty$ in the sense that
\[
\|f - f_\infty\|_{W^{2,p}(S^2)} < \sigma,
\]
where $p$ is restricted to the range $1 < p \leq 2$, and $f_\infty$ is assumed to be \emph{integrable} in the sense of \cite[Definitions 4.3 or 4.4 and Proposition 4.1]{KwonThesis}. Her \cite[Proposition 4.1]{KwonThesis} quotes results of Simon \cite[pp. 270--272]{Simon_1985} and Adams and Simon \cite{Adams_Simon_1988}.

The result \cite[Lemma 3.3]{Liu_Yang_2010} due to Liu and Yang is another example of an optimal {\L}ojasiewicz--Simon-type gradient inequality for the harmonic map energy function, but restricted to the setting of maps $f:S^2\to N$, where $N$ is a K{\"a}hler manifold of complex dimension $n \geq 1$ and nonnegative bisectional curvature, and the energy $\sE(f)$ is sufficiently small. The result of Liu and Yang generalizes that of Topping \cite[Lemma 1]{Topping_1997}, who assumes that $N = S^2$.

For the Yamabe function, an optimal {\L}ojasiewicz--Simon gradient inequality has been obtained by Carlotto, Chodosh, and Rubinstein \cite{Carlotto_Chodosh_Rubinstein_2015} under the hypothesis that the critical point is \emph{integrable} in the sense of their
\cite[Definition 8]{Carlotto_Chodosh_Rubinstein_2015}, a condition that they observe in \cite[Lemma 9]{Carlotto_Chodosh_Rubinstein_2015} (quoting \cite[Lemma 1]{Adams_Simon_1988} due to Adams and Simon) is equivalent to a function on Euclidean space given by the \emph{Lyapunov--Schmidt reduction} of $\sE$ being constant on an open neighborhood of the critical point.

For the Yang--Mills energy function for connections on a principal $U(n)$-bundle over a closed Riemann surface, an optimal {\L}ojasiewicz--Simon gradient inequality, has been obtained by R\r{a}de \cite[Proposition 7.2]{Rade_1992} when the Yang-Mills connection has trivial stabilizer (equivalent to the center of $\U(n)$) in the group of gauge transformations.

Given the desirability of treating an energy function as a \emph{Morse function} whenever possible, for example in the spirit of Atiyah and Bott \cite{Atiyah_Bott_1983} for the Yang-Mills equation over Riemann surfaces, it is useful to rephrase these integrability conditions in the spirit of Morse theory.

\begin{defn}[Morse--Bott function]
\label{defn:Morse-Bott_function}
(See Austin and Braam \cite[Section 3.1]{Austin_Braam_1995}.)
Let $\sB$ be a smooth Banach manifold, let $\sE: \sB \to \RR$ be a $C^2$ function, and define
\[
  \Crit\sE := \{x\in\sB:\sE'(x) = 0\}.
\]
A smooth submanifold $\sC \hookrightarrow \sB$ is called a \emph{nondegenerate critical submanifold of $\sB$} if $\sC \subset \Crit\sE$ and
\begin{equation}
\label{eq:Nondegenerate_critical_submanifold}
(T\sC)_x = \Ker \sE''(x), \quad\forall\,x\in \sC,
\end{equation}
where $\sE''(x):(T\sB)_x \to (T\sB)_x^*$ is the Hessian of $\sE$ at the point $x \in \sC$. One calls $\sE$ a \emph{Morse--Bott function} if its critical set $\Crit\sE$ consists of nondegenerate critical submanifolds.

We say that a $C^2$ function $\sE:\sB\to\RR$ is \emph{Morse--Bott at a point $x_\infty \in \Crit\sE$} if there is an open neighborhood $\sU\subset\sB$ of $x_\infty$ such that $\sU\cap\Crit\sE$ is a relatively open, smooth submanifold of $\sB$ and \eqref{eq:Nondegenerate_critical_submanifold} holds at $x_\infty$.
\end{defn}

Definition \ref{defn:Morse-Bott_function} is a restatement of definitions of a Morse--Bott function on a finite-dimensional manifold, but we omit the condition that $\sC$ be compact and connected as in Nicolaescu \cite[Definition 2.41]{Nicolaescu_morse_theory} or the condition that $\sC$ be compact in Bott \cite[Definition, p. 248]{Bott_1954}. Note that if $\sB$ is a Riemannian manifold and $\sN$ is the normal bundle of $\sC \hookrightarrow \sB$, so $\sN_x = (T\sC)_x^\perp$ for all $x \in \sC$, where $(T\sC)_x^\perp$ is the orthogonal complement of $(T\sC)_x$ in $(T\sB)_x$, then \eqref{eq:Nondegenerate_critical_submanifold} is equivalent to the assertion that the restriction of the Hessian to the fibers of the normal bundle of $\sC$,
\[
\sE''(x):\sN_x \to (T\sB)_x^*,
\]
is \emph{injective} for all $x \in \sC$; using the Riemannian metric on $\sB$ to identify $(T\sB)_x^* \cong (T\sB)_x$, we see that $\sE''(x):\sN_x \cong \sN_x$ is an isomorphism for all $x \in \sC$. In other words, the condition \eqref{eq:Nondegenerate_critical_submanifold} is equivalent to the assertion that the Hessian of $\sE$ is an automorphism of the normal bundle $\sN$ when $\sB$ has a Riemannian metric.

The Yang-Mills energy function for connections with trivial stabilizer on a principal $G$-bundle over $X$ is Morse--Bott when $X$ is a closed Riemann surface --- see the article by Atiyah and Bott \cite{Atiyah_Bott_1983} and the discussion by Swoboda \cite[p. 161]{Swoboda_2012}. However, it appears difficult to extend this result to the case where $X$ is a closed, four-dimensional Riemannian manifold. To gain a sense of the difficulty, see the analysis by Bourguignon and Lawson \cite{Bourguignon_Lawson_1981} and Taubes \cite{TauStab} of the Hessian for the Yang-Mills energy function when $X = S^4$ with its standard round metric of radius one. For a development of Morse--Bott theory and a discussion of and references to its numerous applications, we refer to Austin and Braam \cite{Austin_Braam_1995}. Given a Morse--Bott function, we have the

\begin{mainthm}[Optimal {\L}ojasiewicz--Simon gradient inequality for Morse--Bott functions on Banach spaces]
\label{mainthm:Optimal_Lojasiewicz-Simon_gradient_inequality_Morse-Bott_energy_functional}  
Assume the hypotheses of Theorem \ref{mainthm:Lojasiewicz-Simon_gradient_inequality} or of Theorem \ref{mainthm:Lojasiewicz-Simon_gradient_inequality2}. 
If $\sM$ is $C^1$ and $\sE$ is a Morse--Bott function at $x_\infty$ in the sense of Definition \ref{defn:Morse-Bott_function}, then the conclusions of Theorem \ref{mainthm:Lojasiewicz-Simon_gradient_inequality} or \ref{mainthm:Lojasiewicz-Simon_gradient_inequality2} hold with $\theta = 1/2$.
\end{mainthm}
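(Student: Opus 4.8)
The plan is to reduce the problem to finite dimensions by the Lyapunov--Schmidt construction that I would set up exactly as in the proof of Theorem \ref{mainthm:Lojasiewicz-Simon_gradient_inequality} (in Section \ref{sec:Lojasiewicz-Simon_gradient_inequality_analytic_and_Morse-Bott_energy_functionals}), and then to show that the Morse-Bott hypothesis forces the reduced function to be \emph{constant} near $x_\infty$, which immediately produces the optimal exponent. Write $A := \sM'(x_\infty):\sX\to\tilde\sX$, a Fredholm operator of index zero, and set $K := \Ker A = \Ker\sE''(x_\infty)$, with $n := \dim K < \infty$. Choose splittings $\sX = K\oplus\sX_1$ and $\tilde\sX = \ran A\oplus C$ with $\dim C = n$, and let $\Pi:\tilde\sX\to C$ be the projection with kernel $\ran A$, so that $A|_{\sX_1}:\sX_1\to\ran A$ is a Banach-space isomorphism onto a closed subspace and hence is bounded below. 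Because $\sM$ is $C^1$, the implicit function theorem solves the transverse equation $(1-\Pi)\sM(x_\infty+\xi+\eta)=0$ near $0$ by a $C^1$ map $\phi:K\supset V\to\sX_1$ and produces the $C^2$ reduced function $\Gamma(\xi) := \sE(x_\infty+\xi+\phi(\xi))$ on a neighborhood $V$ of $0$ in $K$. Its critical points correspond to those of $\sE$ near $x_\infty$ through the $C^1$ embedding $\Psi(\xi) := x_\infty+\xi+\phi(\xi)$, and the reduction step would reduce the desired inequality for $\sE$, with a given exponent $\theta$, to the finite-dimensional {\L}ojasiewicz inequality $|\nabla\Gamma(\xi)|\geq c\,|\Gamma(\xi)-\Gamma(0)|^\theta$.

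The crux is to prove that the Morse-Bott hypothesis makes $\Gamma$ constant near $0$. By hypothesis, $\sC := U\cap\Crit\sE$ is a smooth submanifold with $(T\sC)_{x_\infty} = \Ker\sE''(x_\infty) = K$ by \eqref{eq:Nondegenerate_critical_submanifold}, so $\dim\sC = n = \dim K$. Since $\Psi$ is a $C^1$ diffeomorphism onto its image and, by the uniqueness clause in the implicit function theorem, every critical point of $\sE$ near $x_\infty$ has the form $\Psi(\xi)$ with $\xi\in\Crit\Gamma$, I obtain $\Crit\Gamma = \Psi^{-1}(\sC)$ near $0$: a smooth submanifold of $K$ of dimension $\dim\sC = n = \dim K$. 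A submanifold of full dimension is open, so $\nabla\Gamma$ vanishes on an open neighborhood of $0$, whence $\Gamma\equiv\Gamma(0)=\sE(x_\infty)$ there. This is exactly the \emph{integrability} of $x_\infty$ alluded to in Section \ref{subsec:Lojasiewicz-Simon_gradient_inequality_abstract_functional_Morse-Bott}. I expect this dimension count --- together with the verification that the Lyapunov--Schmidt correspondence is an exact bijection of critical sets preserving the manifold structure --- to be the main point requiring care.

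With $\Gamma$ constant the finite-dimensional inequality holds trivially with $\theta=1/2$ (both sides vanish), and the exponent can also be read off directly. For $x = x_\infty+\xi+\eta$ near $x_\infty$, the point $\Psi(\xi)$ is now a critical point with $\sE(\Psi(\xi))=\sE(x_\infty)$ and $\sM(\Psi(\xi))=0$, while $x-\Psi(\xi) = \eta-\phi(\xi)\in\sX_1$. A second-order Taylor expansion of the $C^2$ function $\sE$ at $\Psi(\xi)$ gives $|\sE(x)-\sE(x_\infty)| \leq C\|\eta-\phi(\xi)\|_\sX^2$, whereas $\sM(x) = \sM(x)-\sM(\Psi(\xi)) = A(\eta-\phi(\xi)) + o(\|\eta-\phi(\xi)\|_\sX)$ by continuity of $\sM'$; applying $1-\Pi$ and the lower bound for $A|_{\sX_1}$ yields $\|\sM(x)\|_{\tilde\sX}\geq c\,\|\eta-\phi(\xi)\|_\sX$ after shrinking $\sigma$. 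Combining the two estimates gives \eqref{eq:Lojasiewicz-Simon_gradient_inequality_analytic_functional_general} with $\theta=1/2$. Under the hypotheses of Theorem \ref{mainthm:Lojasiewicz-Simon_gradient_inequality2}, the same argument applies using the extension $\sM_1(x_\infty):\sG\to\tilde\sG$, whose Fredholm property supplies the analogous lower bound in the $\tilde\sG$-norm, yielding \eqref{eq:Lojasiewicz-Simon_gradient_inequality_analytic_functional_general2} with $\theta=1/2$.
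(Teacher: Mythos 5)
Your proof is correct, and it follows the same overall strategy as the paper's --- a Lyapunov--Schmidt reduction to a finite-dimensional function on $K=\Ker\sE''(x_\infty)$, followed by the observation that the Morse--Bott hypothesis forces the reduced function to be constant near the origin (this is precisely the paper's Lemma \ref{lem:Lyapunov}~\eqref{eq:Lj1}) --- but the implementation differs at both ends. For the reduction, the paper does not solve the transverse equation $(1-\Pi)\sM(x_\infty+\xi+\eta)=0$ by the implicit function theorem; it instead inverts the single augmented map $\Phi(x)=\sM(x)+\Pi(x-x_\infty)$ (Lemma \ref{lem:laux}), whose inverse $\Psi$ simultaneously parametrizes the slice $\{(1-\Pi)\sM=0\}$ by $K\cap V_0$ and delivers the quantitative estimate \eqref{eq:aux}, $\|\Psi(\Pi\alpha)-\Psi(\alpha)\|_\sX\le C\|\sM(\Psi(\alpha))\|_{\tilde\sX}$; your slice and the paper's coincide up to a $C^1$ reparametrization of $K$, and your dimension-count argument for constancy (an $n$-dimensional submanifold of the $n$-dimensional $K$ is open) is an equivalent substitute for the paper's inverse-function-theorem argument applied to $\Phi$ restricted to $\Crit\sE$. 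For the final quadratic estimate, you Taylor-expand $\sE$ at the nearby critical point $\Psi(\xi)$ and bound $\|\sM(x)\|_{\tilde\sX}$ from below by $c\|x-\Psi(\xi)\|_\sX$ using coercivity of $A|_{\sX_1}$, whereas the paper runs a Fundamental-Theorem-of-Calculus chain through \eqref{eq:aux} to reach \eqref{eq:EPsi_minus_GammaPi_bounded_by_gradient_E_Psi_squared} without ever invoking a lower bound for $A$ on a complement; the two computations are of comparable length, but the paper's has the advantage that it is the identical computation used in the analytic case, so both exponents fall out of one statement (Proposition \ref{prop:LSprop}). Two caveats on your write-up: $\Gamma=\sE\circ\Psi$ is only $C^1$, not $C^2$, since $\Psi$ is only $C^1$ (harmless, as you never use more); and your one-sentence dispatch of the Theorem \ref{mainthm:Lojasiewicz-Simon_gradient_inequality2} case conceals genuine work --- there both the Taylor upper bound and the coercivity lower bound must be carried out in the $\sG$ and $\tilde\sG$ norms, which requires identifying $\Ker\sM_1(x_\infty)$ with $K$ and using the assumed continuity of $x\mapsto\sM_1(x)$ in $\sL(\sG,\tilde\sG)$; this is exactly what the paper's Lemma \ref{lem:inv_ext} and Lemma \ref{lem:laux2} provide, so that step should be proved rather than asserted.
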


We refer to Feehan \cite[Appendix C]{Feehan_lojasiewicz_inequality_all_dimensions_morse-bott} for a discussion of integrability and the Morse--Bott condition for the harmonic map energy function, together with examples.

\begin{rmk}[Previous versions of the optimal {\L}ojasiewicz--Simon gradient inequality]
\label{rmk:Optimal_Lojasiewicz-Simon_gradient_inequality_Morse-Bott_energy_functional}
Special cases of Theorem \ref{mainthm:Optimal_Lojasiewicz-Simon_gradient_inequality_Morse-Bott_energy_functional}, were proved earlier by Chill \cite[Corollary 3.12]{Chill_2003}, \cite[Corollary 4]{Chill_2006}, Haraux and Jendoubi \cite[Theorem 2.1]{Haraux_Jendoubi_2007}, and Simon \cite[Lemma 3.13.1]{Simon_1996}.
\end{rmk}

\subsection{{\L}ojasiewicz--Simon gradient inequality for the harmonic map energy function}
\label{subsec:Lojasiewicz-Simon_gradient_inequality_harmonic_map_functional}
We describe a consequence of Theorem \ref{mainthm:Lojasiewicz-Simon_gradient_inequality} for the harmonic map energy function. For background on harmonic maps, we refer to H{\'e}lein \cite{Helein_harmonic_maps}, Jost \cite{Jost_riemannian_geometry_geometric_analysis}, Simon \cite{Simon_1996}, Struwe \cite{Struwe_variational_methods}, and references cited therein. We begin with the

\begin{defn}[Harmonic map energy function]
\label{defn:Harmonic_map_energy_functional}
Let $(M,g)$ and $(N,h)$ be a pair of closed, smooth Riemannian manifolds. One defines the \emph{harmonic map energy function} by
\begin{equation}
\label{eq:Harmonic_map_energy_functional}
\sE_{g,h}(f)
:=
\frac{1}{2} \int_M |df|_{g,h}^2 \,d\vol_g,
\end{equation}
for smooth maps $f:M\to N$, where $df:TM \to TN$ is the differential map.
\end{defn}

When clear from the context, we omit explicit mention of the Riemannian metrics $g$ on $M$ and $h$ on $N$ and write $\sE = \sE_{g,h}$. Although initially defined for smooth maps, the energy function $\sE$ in Definition \ref{defn:Harmonic_map_energy_functional}, extends to the case of Sobolev maps of class $W^{1,2}$. To define the gradient $\sM = \sM_{g,h}$ of the energy function $\sE$ in equation \eqref{eq:Harmonic_map_energy_functional} with respect to the $L^2$ metric on $C^\infty(M;N)$, we first choose an isometric embedding, $(N,h) \hookrightarrow \RR^n$ for a sufficiently large $n$ (courtesy of the isometric embedding theorem due to Nash \cite{Nash_1956}), and recall that by \cite[Lemma 1.2.4]{Helein_harmonic_maps} we have
\begin{equation}
\label{eq:Gradient_harmonic_map_operator}
\sM(f) = \Delta_g f - A_h(f)(df,df),
\end{equation}
as in \cite[Equations (2.2)(iii) and (iv)]{Simon_1996}. Here, $A_h$ denotes the second fundamental form of the isometric embedding $(N,h) \subset \RR^n$, and
\begin{equation}
\label{eq:Laplace-Beltrami_operator}
\Delta_g
:=
-\divg_g \grad_g
=
d^{*,g}d
=
-\frac{1}{\sqrt{\det g}} \frac{\partial}{\partial x^\beta}
\left(\sqrt{\det g}\, \frac{\partial f}{\partial x^\alpha} \right)
\end{equation}
denotes the Laplace-Beltrami operator for $(M,g)$ (with the opposite sign convention to that of \cite[Equations (1.14) and (1.33)]{Chavel}) acting on the scalar components $f^i$ of $f = (f^1,\ldots,f^n)$, and $\{x^\alpha\}$ denote local coordinates on $M$.

Given a smooth map $f:M\to N$, an isometric embedding $(N,h) \subset \RR^n$, a non-negative integer $k$, and constant $p \in [1,\infty)$, we define the Sobolev norms
\[
\|f\|_{W^{k,p}(M)} := \left(\sum_{i=1}^n \|f^i\|_{W^{k,p}(M)}^p\right)^{1/p},
\]
with
\[
\|f^i\|_{W^{k,p}(M)} := \left(\sum_{j=0}^k \int_M |(\nabla^g)^j f^i|^p \,d\vol_g\right)^{1/p},
\]
where $\nabla^g$ denotes the Levi-Civita connection on $TM$ and all associated bundles (that is, $T^*M$ and their tensor products). If $k=0$, then we denote $\|f\|_{W^{0,p}(M)} = \|f\|_{L^p(M)}$. For $p \in [1,\infty)$ and nonnegative integers $k$, we use \cite[Theorem 3.12]{AdamsFournier} (applied to $W^{k,p}(M;\RR^n)$ and noting that $M$ is a closed manifold) and Banach-space duality to define
\[
W^{-k,p'}(M;\RR^n) := \left(W^{k,p}(M;\RR^n)\right)^*,
\]
where $p'\in (1,\infty)$ is the dual exponent defined by $1/p+1/p'=1$. Elements of the Banach space dual $(W^{k,p}(M;\RR^n))^*$ may be characterized via \cite[Section 3.10]{AdamsFournier} as distributions in the Schwartz space $\sD'(M;\RR^n)$ \cite[Section 1.57]{AdamsFournier}.

We note that if $(N,h)$ is real analytic, then the isometric embedding $(N,h) \subset \RR^n$ may also be chosen to be analytic by the analytic isometric embedding theorem due to Nash \cite{Nash_1966}, with a simplified proof due to Greene and Jacobowitz \cite{Greene_Jacobowitz_1971}).

One says that a map $f \in W^{1,2}(M;N)$ is \emph{weakly harmonic} \cite[Definition 1.4.9]{Helein_harmonic_maps} if it is a critical point of the energy function \eqref{eq:Harmonic_map_energy_functional}, that is,
\[
\sE'(f) = 0.
\]
A well-known result due to H\'elein \cite[Theorem 4.1.1]{Helein_harmonic_maps} tells us that if $M$ has dimension $d=2$, then $f \in C^\infty(M;N)$; for $d \geq 3$, regularity results are far more limited --- see, for example, \cite[Theorem 4.3.1]{Helein_harmonic_maps} due to Bethuel.

The statement of the forthcoming Theorem \ref{mainthm:Lojasiewicz-Simon_gradient_inequality_energy_functional_Riemannian_manifolds} includes the most delicate dimension for the source Riemannian manifold $(M,g)$, namely the case where $M$ has dimension $d=2$. Following the landmark articles by Sacks and Uhlenbeck \cite{Sacks_Uhlenbeck_1981, Sacks_Uhlenbeck_1982}, the case where the domain manifold $M$ has dimension two is well-known to be critical.

\begin{mainthm}[{\L}ojasiewicz--Simon $W^{k-2,p}$ gradient inequality for the energy function for maps between pairs of Riemannian manifolds]
\label{mainthm:Lojasiewicz-Simon_gradient_inequality_energy_functional_Riemannian_manifolds}
(See Feehan and Maridakis \cite[Theorem 5]{Feehan_Maridakis_Lojasiewicz-Simon_application_harmonic_maps}.)  
Let $d\geq 2$ and $k \geq 1$ be integers and $p\in (1,\infty)$ be such that $kp > d$. Let $(M,g)$ and $(N,h)$ be closed, smooth Riemannian manifolds, with $M$ of dimension $d$. If $(N,h)$ is real analytic (respectively, $C^\infty$) and $f\in W^{k,p}(M;N)$, then the gradient map for the energy function $\sE:W^{k,p}(M; N)\to\RR$ in \eqref{eq:Harmonic_map_energy_functional},
\[
W^{k,p}(M; N) \ni f \mapsto \sM(f) \in W^{k-2,p}(M; f^*TN) \subset W^{k-2,p}(M; \RR^n),
\]
is a real analytic (respectively, $C^\infty$) map of Banach spaces. If $(N,h)$ is real analytic and $f_\infty \in W^{k,p}(M; N)$ is a weakly harmonic map, then there are constants $Z \in (0, \infty)$ and $\sigma \in (0,1]$ and $\theta \in [1/2,1)$, depending on $f_\infty$, $g$, $h$, $k$, $p$, with the following significance. If $f\in W^{k,p}(M;N)$ obeys the $W^{k,p}$ \emph{{\L}ojasiewicz--Simon neighborhood} condition,
\begin{equation}
\label{eq:Lojasiewicz-Simon_gradient_inequality_harmonic_map_neighborhood_Riemannian_manifold}
\|f - f_\infty\|_{W^{k,p}(M)} < \sigma,
\end{equation}
then the harmonic map energy function
\eqref{eq:Harmonic_map_energy_functional} obeys the
\emph{{\L}ojasiewicz--Simon gradient inequality},
\begin{equation}
\label{eq:Lojasiewicz-Simon_gradient_inequality_harmonic_map_energy_functional_Riemannian_manifold}
\|\sM(f)\|_{W^{k-2,p}(M;f^*TN)}
\geq
Z|\sE(f) - \sE(f_\infty)|^\theta.
\end{equation}
Furthermore, if the hypothesis that $(N,h)$ is analytic is replaced by the condition that $\sE$ is Morse--Bott at $f_\infty$, then \eqref{eq:Lojasiewicz-Simon_gradient_inequality_harmonic_map_energy_functional_Riemannian_manifold} holds with the optimal exponent $\theta=1/2$.
\end{mainthm}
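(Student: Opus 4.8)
The plan is to deduce the theorem from the abstract Theorem \ref{mainthm:Lojasiewicz-Simon_gradient_inequality} (and from Theorem \ref{mainthm:Optimal_Lojasiewicz-Simon_gradient_inequality_Morse-Bott_energy_functional} for the Morse-Bott statement) by choosing the Banach-space triple
\[
\sX = W^{k,p}(M; f_\infty^*TN), \quad \tilde\sX = W^{k-2,p}(M; f_\infty^*TN), \quad \sX^* = W^{-k,p'}(M; f_\infty^*TN),
\]
and recasting $\sE$ near $f_\infty$ as a function on an open neighborhood of the origin in $\sX$. Since $kp > d$, the Sobolev embedding $W^{k,p}(M;\RR^n) \hookrightarrow C(M;\RR^n)$ holds, so any $f$ with $\|f - f_\infty\|_{W^{k,p}(M)}$ small takes values in a tubular neighborhood of $N$ and lies in the image of the fiberwise exponential chart $\xi \mapsto f_\xi := \exp^N_{f_\infty}(\xi)$ with $\xi \in \sX$. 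This chart identifies a $W^{k,p}(M)$-neighborhood of $f_\infty$ in $W^{k,p}(M;N)$ with an open set $\sU \subset \sX$ containing $0$, and it is bi-Lipschitz onto its image as a map of Sobolev spaces, so the neighborhood condition \eqref{eq:Lojasiewicz-Simon_gradient_inequality_harmonic_map_neighborhood_Riemannian_manifold} is equivalent to $\|\xi\|_\sX < \sigma'$. I set $\sE_\infty(\xi) := \sE(f_\xi)$ and let $\sM_\infty : \sU \to \tilde\sX$ be its $L^2$-gradient; then $\sM_\infty(0) = 0$ because $f_\infty$ is weakly harmonic. The embedding $\sX \subset \sX^*$ is definite because it factors through $L^2(M; f_\infty^*TN)$, as recorded in Remark \ref{rmk:Embedding_hypothesis_Huang_theorem_2-4-5}.

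The first assertion, analyticity of the gradient map, is the analytic heart of the argument and must be established before the abstract theorem can be applied. Using formula \eqref{eq:Gradient_harmonic_map_operator}, $\sM(f) = \Delta_g f - A_h(f)(df, df)$, the term $\Delta_g$ is bounded linear $W^{k,p} \to W^{k-2,p}$ and hence analytic. For the quadratic term I would verify that: (i) composition with the real-analytic map $A_h$ induces a real-analytic superposition operator $W^{k,p}(M;N) \to W^{k,p}$, where real analyticity of $A_h$ comes from the analytic isometric embedding theorem and analyticity of the composition operator uses $kp > d$, so that $W^{k,p}(M)$ is a Banach algebra and the standard estimates for analytic superposition operators apply; and (ii) the Sobolev multiplication $W^{k,p} \times W^{k-1,p} \times W^{k-1,p} \to W^{k-2,p}$ is bounded, again by $kp > d$, so that $A_h(f)(df,df) \in W^{k-2,p}$ depends analytically on $f$. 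These two facts give real analyticity (and, in the $C^\infty$ case, smoothness) of $\xi \mapsto \sM_\infty(\xi)$; the precise superposition estimates are assembled in the companion article \cite{Feehan_Maridakis_Lojasiewicz-Simon_harmonic_maps_v5}. In particular $\sE_\infty$ is then a $C^2$ function with real-analytic gradient map.

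Next I would check that the Hessian operator $\sM_\infty'(0) : \sX \to \tilde\sX$, which is the Jacobi operator of $f_\infty$ with leading part $\Delta_g$ acting on sections of $f_\infty^*TN$, is Fredholm of index zero. This is standard elliptic theory on the closed manifold $M$: a second-order elliptic operator is bounded Fredholm $W^{k,p} \to W^{k-2,p}$, and since the Jacobi operator is formally self-adjoint with respect to the $L^2$ inner product its kernel and cokernel have equal dimension, so the index vanishes. All hypotheses of Theorem \ref{mainthm:Lojasiewicz-Simon_gradient_inequality} now hold for the triple $(\sE_\infty, \sM_\infty, 0)$, yielding $\|\sM_\infty(\xi)\|_{\tilde\sX} \geq Z|\sE_\infty(\xi) - \sE_\infty(0)|^\theta$ whenever $\|\xi\|_\sX < \sigma'$. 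Since $\sE_\infty(\xi) - \sE_\infty(0) = \sE(f_\xi) - \sE(f_\infty)$, it remains to compare $\|\sM_\infty(\xi)\|_{\tilde\sX}$ with $\|\sM(f_\xi)\|_{W^{k-2,p}(M; f_\xi^*TN)}$. The two gradients differ by the derivative of the chart composed with parallel transport from $f_\xi^*TN$ to $f_\infty^*TN$; for $\xi$ small this transport is a bounded isomorphism uniformly close to the identity, so the $W^{k-2,p}$ norms are comparable and the inequality \eqref{eq:Lojasiewicz-Simon_gradient_inequality_harmonic_map_energy_functional_Riemannian_manifold} follows after adjusting $Z$ and $\sigma$. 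For the final, Morse-Bott statement I would instead invoke Theorem \ref{mainthm:Optimal_Lojasiewicz-Simon_gradient_inequality_Morse-Bott_energy_functional}: when $(N,h)$ is merely $C^\infty$ the map $\sM_\infty$ is $C^1$, and if $\sE$ is Morse-Bott at $f_\infty$ then so is $\sE_\infty$ at $0$, since the chart is a diffeomorphism preserving the critical set and the kernel of the Hessian, whence $\theta = 1/2$.

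I expect the principal obstacle to be the analyticity assertion of the second paragraph: proving that the superposition operator $f \mapsto A_h \circ f$ is real analytic as a map of Sobolev spaces and that the resulting quadratic term depends analytically on $f$ in the $W^{k-2,p}$ norm, while uniformly handling the borderline multiplication estimates that are only just permitted by $kp > d$. By comparison, the chart bookkeeping, the elliptic Fredholm property, and the norm-transfer across the varying bundles $f^*TN$ are comparatively routine; the real-analytic superposition estimates are where the genuine care lies.
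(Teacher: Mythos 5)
Your overall strategy coincides with the paper's: fix the critical point, work in the Banach triple $\sX = W^{k,p}(M;f_\infty^*TN) \subset \tilde\sX = W^{k-2,p}(M;f_\infty^*TN) \subset \sX^* = W^{-k,p'}(M;f_\infty^*TN)$, pull $\sE$ back by a chart centered at $f_\infty$, verify analyticity and the Fredholm/index-zero hypothesis, apply Theorem \ref{mainthm:Lojasiewicz-Simon_gradient_inequality} (or Theorem \ref{mainthm:Optimal_Lojasiewicz-Simon_gradient_inequality_Morse-Bott_energy_functional}), and transfer the $W^{k-2,p}$ norm from $f_\infty^*TN$ back to $f^*TN$. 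Two of your choices are legitimate variants: the paper uses the nearest-point-projection chart $\Phi_{f_\infty}(u)=\pi_h(f_\infty+u)$ coming from the isometric embedding $N\subset\RR^n$ rather than the exponential map (this makes the power-series computations in Proposition \ref{prop:analyticharmd} explicit in the ambient $\RR^n$), and for analyticity of the gradient map the paper proves analyticity of the \emph{energy} first and then upgrades smoothness of $\sM$ to analyticity via Lemma \ref{lem:subanalytic} together with the embedding $W^{k-2,p}\subset(W^{k,p})^*$ of Lemma \ref{lem:Continuous_embedding_Wk-2p_into_Wkpdual}; your direct superposition-operator route is explicitly acknowledged in the paper as an alternative.

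The one genuine soft spot is your treatment of the Fredholm step, which you dismiss as ``standard elliptic theory.'' The difficulty is that $f_\infty$ is only a $W^{k,p}$ weakly harmonic map --- for $d\geq 3$ it need not be smooth --- so the Jacobi operator $\sM'(f_\infty)v = \Delta_g v - 2A_h(f_\infty)(df_\infty,dv) - (dA_h)(v)(df_\infty,df_\infty)$ has coefficients of Sobolev class only ($W^{k,p}$ and $W^{k-1,p}$, with the zeroth-order term landing barely in $W^{k-2,p}$), and the bundle $f_\infty^*TN$ is itself only a Sobolev bundle. Classical elliptic Fredholm theory on $W^{k,p}\to W^{k-2,p}$ and the self-adjointness argument for vanishing index both presuppose smooth coefficients (and, for $p\neq 2$, identifying kernel and cokernel already requires elliptic regularity to compare across exponents). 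The paper fills this in Proposition \ref{prop:fredharmd}: it establishes the result first for $\tilde f\in C^\infty(M;N)$, proves that $f\mapsto \sM'(f)$ is continuous into $\sL(W^{k,p},W^{k-2,p})$ after conjugating by the isomorphisms $d\pi_h$ of Lemma \ref{lem:Isomorphism_Sobolev_spaces_sections_two_nearby_maps}, and then invokes density of smooth maps together with openness of the set of index-zero Fredholm operators (Theorem \ref{thm:Openness_set_Fredholm_operators}). Your proof needs this (or an equivalent perturbation argument) to be complete; as written, the step would fail for a non-smooth $f_\infty$.
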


\begin{rmk}[On the hypotheses of Theorem \ref{mainthm:Lojasiewicz-Simon_gradient_inequality_energy_functional_Riemannian_manifolds}]
  When $k=d$ and $p=1$, then $W^{d,1}(M;\RR) \subset C(M;\RR)$ is a continuous embedding by \cite[Theorem 4.12]{AdamsFournier} and $W^{d,1}(M;\RR)$ is a Banach algebra by \cite[Theorem 4.39]{AdamsFournier}. In particular, $W^{d,1}(M; N)$ is a real analytic Banach manifold and the harmonic map energy function $\sE: W^{d,1}(M; N) \to \RR$ is real analytic; see Feehan and Maridakis \cite{Feehan_Maridakis_Lojasiewicz-Simon_application_harmonic_maps} for details. However,
  \[
    \sM'(f_\infty):W^{d,1}(M; f_\infty^*TN) \to W^{d-2,1}(M; f_\infty^*TN)
  \]
need not be a Fredholm operator. Indeed, when $d=2$, failure of the Fredholm property for
\[
\sM'(f_\infty):W^{2,1}(M; f_\infty^*TN) \to L^1(M; f_\infty^*TN)
\]
(unless $L^1(M; f_\infty^*TN)$ is replaced, for example, by a Hardy $H^1$ space) can be inferred from calculations described by H\'elein \cite{Helein_harmonic_maps}.
\end{rmk}

\begin{rmk}[Previous versions of the {\L}ojasiewicz--Simon gradient inequality for the harmonic map energy function]
Topping \cite[Lemma 1]{Topping_1997} proved a {\L}ojasiewicz-type gradient inequality for maps $f:S^2 \to S^2$ with small energy, with the latter criterion replacing the usual small $C^{2,\alpha}(M;\RR^n)$-norm criterion of Simon for the difference between a map and a critical point \cite[Theorem 3]{Simon_1983}. Simon uses a $C^2(M;\RR^n)$ norm to measure distance between maps $f:M \to N$ in \cite[Equation (4.27)]{Simon_1985}. Topping's result is generalized by Liu and Yang in \cite[Lemma 3.3]{Liu_Yang_2010}. Kwon \cite[Theorem 4.2]{KwonThesis} obtains a {\L}ojasiewicz-type gradient inequality for maps $f:S^2 \to N$ that are $W^{2,p}(S^2;\RR^n)$-close to a harmonic map, with $1 < p \leq 2$.
\end{rmk}

Theorem \ref{mainthm:Lojasiewicz-Simon_gradient_inequality2} leads in turn to the following refinement of Theorem \ref{mainthm:Lojasiewicz-Simon_gradient_inequality_energy_functional_Riemannian_manifolds}.

\begin{maincor}[{\L}ojasiewicz--Simon $L^2$ gradient inequality for the energy function for maps between pairs of Riemannian manifolds]
\label{maincor:Lojasiewicz-Simon_gradient_inequality_energy_functional_Riemannian_manifolds_L2}
(See Feehan and Maridakis \cite[Corollary 6]{Feehan_Maridakis_Lojasiewicz-Simon_application_harmonic_maps}.)    
Assume the hypotheses of Theorem \ref{mainthm:Lojasiewicz-Simon_gradient_inequality_energy_functional_Riemannian_manifolds} and, in addition, require that $k$ and $p$ obey
\begin{enumerate}
\item \label{item:maincor_LS_grad_ineq_L2_d=2_k=1}
$d=2$ and $k=1$ and $2 < p < \infty$; or

\item \label{item:maincor_LS_grad_ineq_L2_d=3_k=1}
$d=3$ and $k=1$ and $3 < p \leq 6$; or

\item \label{item:maincor_LS_grad_ineq_L2_dgeq2_kgeq2}
$d\geq 2$ and $k\geq 2$ and $2 \leq p < \infty$ with $kp > d$.
\end{enumerate}
If $f\in W^{k,p}(M;N)$ obeys the $W^{k,p}$ \emph{{\L}ojasiewicz--Simon neighborhood} condition \eqref{eq:Lojasiewicz-Simon_gradient_inequality_harmonic_map_neighborhood_Riemannian_manifold}, then the harmonic map energy function \eqref{eq:Harmonic_map_energy_functional} obeys the \emph{{\L}ojasiewicz--Simon $L^2$ gradient inequality},
\begin{equation}
\label{eq:Lojasiewicz-Simon_gradient_inequality_harmonic_map_energy_functional_Riemannian_manifold_L2}
\|\sM(f)\|_{L^2(M;f^*TN)}
\geq
Z|\sE(f) - \sE(f_\infty)|^\theta.
\end{equation}
Furthermore, if the hypothesis that $(N,h)$ be analytic is replaced by the condition that $\sE$ be Morse--Bott at $f_\infty$, then \eqref{eq:Lojasiewicz-Simon_gradient_inequality_harmonic_map_energy_functional_Riemannian_manifold_L2} holds with the optimal exponent $\theta=1/2$.
\end{maincor}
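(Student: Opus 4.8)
The plan is to deduce the Corollary from Theorem~\ref{mainthm:Lojasiewicz-Simon_gradient_inequality2} by fitting the harmonic map setup into an evolution triple whose pivot space is $\sH = L^2(M;f_\infty^*TN)$. Throughout I take $\sX = W^{k,p}(M;f_\infty^*TN)$ and $\tilde\sX = W^{k-2,p}(M;f_\infty^*TN)$, the Banach spaces already used in Theorem~\ref{mainthm:Lojasiewicz-Simon_gradient_inequality_energy_functional_Riemannian_manifolds}, so that $\sM$ is analytic (respectively $C^\infty$) and $\sM'(f_\infty):\sX\to\tilde\sX$ is Fredholm of index zero. Here $\sM'(f_\infty)$ is the Jacobi (linearized harmonic map) operator, a second-order elliptic operator whose zeroth- and first-order coefficients are built from $f_\infty$, from $df_\infty\in W^{k-1,p}$, and from the second fundamental form of $(N,h)\subset\RR^n$. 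The argument then splits according to whether $k\geq 2$ or $k=1$, because the target $L^2$ norm on the gradient is reached by genuinely different mechanisms in the two regimes.

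For the case $k\geq 2$, $p\geq 2$, I would apply Theorem~\ref{mainthm:Lojasiewicz-Simon_gradient_inequality2} with the intermediate spaces $\sG = W^{2,2}(M;f_\infty^*TN)$ and $\tilde\sG = \sH = L^2(M;f_\infty^*TN)$. First I check the two embedding chains: $\sX=W^{k,p}\subset W^{2,2}=\sG\subset L^2=\tilde\sG$, valid because $k\geq 2$ and $p\geq 2$ on the closed manifold $M$, and $\tilde\sX=W^{k-2,p}\subset L^2=\tilde\sG\subset W^{-k,p'}=\sX^*$, the first inclusion using $p\geq 2$ and the last using $W^{k,p}\hookrightarrow L^2$ (since $kp>d$); both routes $\sX\subset\sG\subset\tilde\sG$ and $\sX\subset\tilde\sX\subset\tilde\sG$ induce the same map $W^{k,p}\hookrightarrow L^2$. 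Next, $\sM'(f_\infty)$ extends to the Jacobi operator $\sM_1(f_\infty):W^{2,2}\to L^2$, which is Fredholm of index zero by standard elliptic theory, and I verify that $f\mapsto \sM_1(f)\in\sL(W^{2,2},L^2)$ is continuous on a $W^{k,p}$-neighborhood of $f_\infty$ via Sobolev multiplication estimates for products of the coefficients (lying in $W^{k-1,p}$) against elements of $W^{2,2}$. Theorem~\ref{mainthm:Lojasiewicz-Simon_gradient_inequality2} then yields \eqref{eq:Lojasiewicz-Simon_gradient_inequality_harmonic_map_energy_functional_Riemannian_manifold_L2} directly.

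For the cases $k=1$ this choice is unavailable, since a second-order operator cannot be Fredholm from any space containing $\sX=W^{1,p}$ into $L^2$: its domain would have to lie in $W^{2,2}$, and $W^{1,p}\not\subset W^{2,2}$. Instead I would invoke the gradient inequality \eqref{eq:Lojasiewicz-Simon_gradient_inequality_harmonic_map_energy_functional_Riemannian_manifold} already supplied by Theorem~\ref{mainthm:Lojasiewicz-Simon_gradient_inequality_energy_functional_Riemannian_manifolds} (the degenerate case $\sG=\sX$, $\tilde\sG=\tilde\sX$ of Theorem~\ref{mainthm:Lojasiewicz-Simon_gradient_inequality2}),
\[
\|\sM(f)\|_{W^{-1,p}(M;f^*TN)} \geq Z|\sE(f)-\sE(f_\infty)|^\theta ,
\]
and upgrade its left-hand side by the continuous Sobolev embedding $L^2(M)\hookrightarrow W^{-1,p}(M)$, which gives $\|\sM(f)\|_{W^{-1,p}}\leq C\|\sM(f)\|_{L^2}$ and hence \eqref{eq:Lojasiewicz-Simon_gradient_inequality_harmonic_map_energy_functional_Riemannian_manifold_L2} with $Z$ replaced by $Z/C$ (the inequality being trivial when $\sM(f)\notin L^2$). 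This embedding is dual to $W^{1,p'}(M)\hookrightarrow L^2(M)$, and a direct check of the Sobolev exponents shows it holds precisely in the stated ranges: for $d=2$ it holds for all $1<p'<2$, that is $2<p<\infty$, while for $d=3$ it requires $p'\geq 6/5$, that is $p\leq 6$, which together with $p>d=3$ gives $3<p\leq 6$. Thus the upper bound $p\leq 6$ in dimension three is exactly the threshold making $W^{1,p'}\hookrightarrow L^2$.

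Finally, for the optimal exponent, when $\sE$ is Morse-Bott at $f_\infty$ I replace the appeal to Theorem~\ref{mainthm:Lojasiewicz-Simon_gradient_inequality2} (respectively Theorem~\ref{mainthm:Lojasiewicz-Simon_gradient_inequality_energy_functional_Riemannian_manifolds}) by Theorem~\ref{mainthm:Optimal_Lojasiewicz-Simon_gradient_inequality_Morse-Bott_energy_functional}, whose hypotheses hold since $\sM$ is $C^1$; this forces $\theta=1/2$, and the same embedding upgrades carry the optimal exponent through. I expect the main obstacle to be the case $k\geq 2$: verifying that the Jacobi operator defines a Fredholm map of index zero $W^{2,2}\to L^2$ and, more delicately, that $f\mapsto\sM_1(f)$ is continuous into $\sL(W^{2,2},L^2)$. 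This hinges on Sobolev multiplication estimates relating the regularity $W^{k-1,p}$ of the coefficients to the two-derivative loss from $W^{2,2}$ to $L^2$, and it is here that $kp>d$ and $p\geq 2$ enter; by contrast, the $k=1$ cases reduce to the comparatively routine embedding bookkeeping once the correct target norm is identified.
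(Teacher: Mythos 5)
Your proposal is correct and follows essentially the same route as the paper: for $k\geq 2$ it invokes Theorem \ref{mainthm:Lojasiewicz-Simon_gradient_inequality2} with $\sG = W^{2,2}(M;f_\infty^*TN)$ and $\tilde\sG = L^2(M;f_\infty^*TN)$, verifying the Fredholm index-zero extension $\sM_1(f):W^{2,2}\to L^2$ and its continuous dependence on $f$ via Sobolev multiplication, while for $k=1$ it upgrades the $W^{k-2,p}$ inequality of Theorem \ref{mainthm:Lojasiewicz-Simon_gradient_inequality_energy_functional_Riemannian_manifolds} through the embedding $L^2 \hookrightarrow W^{-1,p}$ dual to $W^{1,p'}\hookrightarrow L^2$, with exactly the exponent bookkeeping that produces the ranges $2<p<\infty$ ($d=2$) and $3<p\leq 6$ ($d=3$). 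The Morse-Bott refinement via Theorem \ref{mainthm:Optimal_Lojasiewicz-Simon_gradient_inequality_Morse-Bott_energy_functional} is also handled as in the paper.
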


The proofs of Theorem \ref{mainthm:Lojasiewicz-Simon_gradient_inequality_energy_functional_Riemannian_manifolds} and Corollary \ref{maincor:Lojasiewicz-Simon_gradient_inequality_energy_functional_Riemannian_manifolds_L2} are provided in Feehan and Maridakis \cite{Feehan_Maridakis_Lojasiewicz-Simon_application_harmonic_maps}.

\begin{rmk}[Application to proof of Simon's $L^2$ gradient inequality for the energy function for maps between pairs of Riemannian manifolds]
Simon's statement \cite[Theorem 3]{Simon_1983}, \cite[Equation (4.27)]{Simon_1985} of the $L^2$ gradient inequality for the energy function for maps from a closed Riemannian manifold into a closed, real analytic Riemannian manifold is identical to that of Corollary \ref{maincor:Lojasiewicz-Simon_gradient_inequality_energy_functional_Riemannian_manifolds_L2}, except that it applies to $C^{2,\alpha}$ (rather than $W^{k,p}$) maps (for $\alpha \in (0,1)$) and the condition \eqref{eq:Lojasiewicz-Simon_gradient_inequality_harmonic_map_neighborhood_Riemannian_manifold} is replaced by
\[
\|f - f_\infty\|_{C^{2,\alpha}(M;\RR^n)} < \sigma,
\]
Simon's result \cite[Theorem 3]{Simon_1983}, \cite[Equation (4.27)]{Simon_1985} follows immediately from Corollary \ref{maincor:Lojasiewicz-Simon_gradient_inequality_energy_functional_Riemannian_manifolds_L2} and the Sobolev Embedding \cite[Theorem 4.12]{AdamsFournier} by choosing $k\geq 1$ and $p \in (1,\infty)$ with $kp>d$ so that there is a continuous Sobolev embedding, $C^{2,\alpha}(M;\RR) \subset W^{k,p}(M;\RR)$ and thus
\[
\|f - f_\infty\|_{W^{k,p}(M;\RR^n)} \leq C\|f - f_\infty\|_{C^{2,\alpha}(M;\RR^n)},
\]
for some constant, $C = C(g,h,k,p,\alpha) \in [1,\infty)$.
\end{rmk}

\begin{rmk}[Exclusion of the case $d\geq 4$ and $k=1$ in Corollary \ref{maincor:Lojasiewicz-Simon_gradient_inequality_energy_functional_Riemannian_manifolds_L2}]
\label{rmk:Lojasiewicz-Simon_gradient_inequality_energy_functional_manifolds_L2_exclude_d_geq_4}
The proofs of Items \eqref{item:maincor_LS_grad_ineq_L2_d=2_k=1} and \eqref{item:maincor_LS_grad_ineq_L2_d=3_k=1} require that $p$ obey
\[
  (p')^* = \frac{dp}{d(p-1)-p} \geq 2,
\]
namely $dp \geq 2d(p-1) - 2p = 2dp-2d-2p$, or equivalently, $dp \leq 2d+2p$, that is,
\[
  p \leq \frac{2d}{d-2}.
\]
But the condition $kp>d$ for $k=1$ implies $p>d$ and so $d$ must obey $d < 2d/(d-2)$, that is, $d-2 < 2$ or $d<4$.
\end{rmk}

\begin{rmk}[Relaxing the condition $p \geq 2$ in Item \eqref{item:maincor_LS_grad_ineq_L2_dgeq2_kgeq2} of Corollary \ref{maincor:Lojasiewicz-Simon_gradient_inequality_energy_functional_Riemannian_manifolds_L2}]
When $k \geq 3$, the condition $p\geq 2$ in Item \eqref{item:maincor_LS_grad_ineq_L2_dgeq2_kgeq2} of Corollary \ref{maincor:Lojasiewicz-Simon_gradient_inequality_energy_functional_Riemannian_manifolds_L2} can be relaxed using the Sobolev embedding \cite[Theorem 4.12]{AdamsFournier}.
\end{rmk}

\subsection{{\L}ojasiewicz--Simon gradient inequalities for boson and fermion coupled Yang--Mills energy functions}
\label{subsec:Lojasiewicz-Simon_gradient_inequality_boson_fermion_Yang--Mills_function}
Coupled versions of the pure Yang--Mills energy function play an essential role in Theoretical Physics and coupled Yang--Mills-type equations lie at the heart of numerous areas of research in Geometric Analysis and Mathematical Physics. We begin with a definition (due to Parker \cite{ParkerGauge}) of two coupled Yang--Mills energy functions. We refer the reader to Feehan and Maridakis \cite{Feehan_Maridakis_Lojasiewicz-Simon_coupled_Yang-Mills_v6} for further details and discussion.

\begin{defn}[Boson and fermion coupled Yang--Mills energy functions]
\label{defn:Boson_and_fermion_coupled_Yang--Mills_energy_function}
(See Parker \cite[Section 2]{ParkerGauge}.)
Let $(X,g)$ be a closed, smooth Riemannian manifold of dimension $d \geq 2$, let $G$ be a compact Lie group, let $P$ be a smooth principal $G$-bundle over $X$, let $\EE$ be a complex, finite-dimensional $G$-module equipped with a $G$-invariant Hermitian inner product, let $\varrho: G \to \Aut_\CC(\EE)$ be a unitary representation \cite[Definitions 2.1.1 and 2.16]{BrockertomDieck}, let $E = P\times_\varrho\EE$ be a smooth Hermitian vector bundle over $X$, and let $m$ and $s$ be smooth real-valued functions on $X$.

We define the \emph{boson coupled Yang--Mills energy function} by
\begin{equation}
\label{eq:Boson_Yang--Mills_energy_function}
\sE_g(A,\Phi)
:=
\frac{1}{2} \int_X \left(|F_A|^2 + |\nabla_A \Phi|^2 -  m|\Phi|^2 - s|\Phi|^4\right)
\,d\vol_g,
\end{equation}
for all smooth connections $A$ on $P$ and smooth sections $\Phi$ of $E$, where
\[
\nabla_A: C^\infty(X;E) \to C^\infty(T^*X\otimes E)
\]
is the covariant derivative induced on $E$ by the connection $A$ on $P$, and $F_A \in \Omega^2(X;\ad P)$ is the curvature of $A$, and $\ad P := P\times_{\Ad}\fg$ denotes the real vector bundle associated to $P$ by the adjoint representation of $G$ on its Lie algebra,
$\Ad:G \ni u \to \Ad_u \in \Aut(\fg)$, with fiber metric defined through an $\Ad(G)$-invariant inner product on $\fg$.

Suppose that $X$ admits a \spinc structure comprising a Hermitian vector bundle $W$ over $X$ and a \emph{Clifford multiplication map} $\rho:T^*X \to \End_\CC(W)$. Thus,
\begin{equation}
\label{eq:Clifford_multiplication}
\rho(\alpha)^2 = -g(\alpha,\alpha)\,\id_W, \quad\forall\, \alpha \in \Omega^1(X),
\end{equation}
and
\[
D_A := \rho\circ\nabla_A: C^\infty(X;W\otimes E) \to C^\infty(X;W\otimes E),
\]
is the corresponding \emph{Dirac operator} \cite[Appendix D]{LM}, \cite[Sections 1.1 and 1.2]{KMBook}, where $\nabla_A$ denotes the covariant derivative induced on $\otimes^n(T^*X)\otimes E$ (for $n \geq 0$) and $W\otimes E$ by the connection $A$ on $P$ and the Levi-Civita connection for the metric $g$ on $TX$.

We define the \emph{fermion coupled Yang--Mills energy function} by
\begin{equation}
\label{eq:Fermion_Yang--Mills_energy_function}
\sF_g(A,\Psi)
:=
\frac{1}{2} \int_X \left(|F_A|^2 + \langle \Psi, D_A\Psi\rangle -  m|\Psi|^2\right)
\,d\vol_g,
\end{equation}
for all smooth connections $A$ on $P$ and smooth sections $\Psi$ of $W\otimes E$.
\end{defn}

We recall from \cite[Corollary D.4]{LM} that a closed orientable smooth manifold $X$ admits a \spinc structure if and only if the second Stiefel-Whitney class $w_2(X) \in H^2(X;\ZZ/2\ZZ)$ is the mod $2$ reduction of an integral class. One calls $W$ the \emph{fundamental spinor bundle} and it carries irreducible representations of $\Spinc(d)$; when $X$ is even-dimensional, there is a splitting $W = W^+\oplus W^-$ and Clifford multiplication restricts to give $\rho: T^*X \to \Hom_\CC(W^\pm, W^\mp)$ \cite[Definition D.9]{LM}.

Although initially defined for smooth connections and sections, the energy functions $\sE_g$ and $\sF_g$ in Definition \ref{defn:Boson_and_fermion_coupled_Yang--Mills_energy_function}, extend to the case of Sobolev connections and sections of class $W^{1,2}$.

A short calculation shows that the gradient of the boson coupled Yang--Mills energy function $\sE_g$ in \eqref{eq:Boson_Yang--Mills_energy_function} with respect to the $L^2$ metric on $C^\infty(X;\Lambda^1\otimes\ad P\oplus E)$,
\begin{equation}
\label{eq:Definition_gradient_boson_coupled_Yang--Mills_energy_function}
\left(\sM_g(A,\Phi), (a,\phi)\right)_{L^2(X,g)}
:=
\left.\frac{d}{dt}\sE_g(A+ta, \Phi+t\phi)\right|_{t=0}
=
\sE_g'(A,\Phi)(a,\phi),
\end{equation}
for all $(a,\phi) \in C^\infty(X;\Lambda^1\otimes\ad P\oplus E)$, is given by
\begin{multline}
\label{eq:Gradient_boson_coupled_Yang--Mills_energy_function}
\left(\sM_g(A,\Phi), (a,\phi)\right)_{L^2(X,g)}
\\
=
(d_A^*F_A, a)_{L^2(X)} + \Real (\nabla_A^*\nabla_A \Phi, \phi )_{L^2(X)} + \Real(\nabla_A\Phi, \rho(a)\Phi )_{L^2(X)}
\\
- \Real ( m\Phi, \phi )_{L^2(X)}
- 2\Real \int_X s|\Phi|^2 \langle \Phi,\phi\rangle\, d\vol_g,
\end{multline}
where $d_A^* = d_A^{*,g}: \Omega^l(X; \ad P) \to \Omega^{l-1}(X; \ad P)$ is the $L^2$ adjoint of the exterior covariant derivative $d_A:\Omega^l(X; \ad P) \to \Omega^{l+1}(X; \ad P)$, for integers $l\geq 0$. As customary, we let
\[
\Lambda^l = \wedge^l(T^*X)
\]
denote the vector bundle over $X$ whose fiber $\wedge^l(T_x^*X)$ over each point $x \in X$ is the $l$-th exterior power of the cotangent space $T_x^*X$, with $\wedge^0(T^*X) := X\times\RR$ and $\wedge^1(T^*X) = T^*X$. For any integer $l\geq 0$, we abbreviate $\Omega^l(X;\ad P) = C^\infty(\Lambda^l\otimes\ad P)$.

We call $(A,\Phi)$ a \emph{boson Yang--Mills pair} (with respect to the Riemannian metric $g$ on $X$) if it is a critical point for $\sE_g$, that is, $\sM_g(A,\Phi) = 0$.

Similarly, one finds that the gradient of the fermion coupled Yang--Mills energy function $\sF_g$ in \eqref{eq:Fermion_Yang--Mills_energy_function} with respect to the $L^2$ metric on $C^\infty(X;\Lambda^1\otimes\ad P\oplus W\otimes E)$,
\begin{equation}
\label{eq:Definition_gradient_fermion_coupled_Yang--Mills_energy_function}
\left(\sM_g(A,\Psi), (a,\psi)\right)_{L^2(X,g)}
:=
\left.\frac{d}{dt}\sF_g(A+ta, \Psi+t\psi)\right|_{t=0}
=
\sF_g'(A,\Psi)(a,\psi),
\end{equation}
for all $(a,\psi) \in C^\infty(X;\Lambda^1\otimes\ad P\oplus W\otimes E)$, is given by
\begin{multline}
\label{eq:Gradient_fermion_coupled_Yang--Mills_energy_function}
\left(\sM_g(A,\Psi), (a,\psi)\right)_{L^2(X,g)}
=
(d_A^*F_A, a)_{L^2(X)} + \Real(D_A \Psi - m\Psi, \psi )_{L^2(X)}
\\
+ \frac{1}{2}(\Psi, \rho(a)\Psi )_{L^2(X)},
\end{multline}
where the action of $a \in \Omega^1(X;\ad P)$ on $\Psi \in C^\infty(X;W\otimes E)$ is defined for all $\alpha \in \Omega^1(X)$ and $\xi \in C^\infty(X;\ad P)$ and $\phi \in C^\infty(X;W)$ and $\eta \in C^\infty(X;E)$ by
\[
\rho(\alpha\otimes\xi)(\phi\otimes \eta)
:=
\rho(\alpha)\phi\otimes \varrho_*(\xi)\eta,
\]
where $\varrho_*:\fg \to \End_\CC(\EE)$ is the representation of the Lie algebra induced by the representation $\varrho: G \to \End_\CC(\EE)$ of the Lie group.

We call $(A,\Psi)$ a \emph{fermion Yang--Mills pair} (with respect to the Riemannian metric $g$ on $X$) if it is a critical point for $\sF_g$, that is, $\sM_g(A,\Psi) = 0$.

Note that both the boson and fermion coupled Yang--Mills energy functions reduce to the pure \emph{Yang--Mills energy function} when $\Phi \equiv 0$ or $\Psi \equiv 0$, respectively,
\begin{equation}
\label{eq:Yang--Mills_energy_function}
\sE_g(A)  := \frac{1}{2}\int_X |F_A|^2\,d\vol_g,
\end{equation}
and $A$ is a \emph{Yang--Mills connection} (with respect to the Riemannian metric $g$ on $X$) if it is a critical point for $\sE_g$, that is,
\[
\sM_g(A) = d_A^{*,g}F_A = 0.
\]
Given a Hermitian or Riemannian vector bundle $V$ over $X$ and covariant derivative $\nabla_A$ which is compatible with the fiber metric on $V$, we denote the Banach space of sections of $V$ of Sobolev class $W^{k,p}$, for any $k\in \NN$ and $p \in [1,\infty]$, by $W_A^{k,p}(X; V)$, with norm,
\begin{equation}
\label{eq:Sobolev_norm_WAkp_sections_vector_bundle_over_manifold_finite_p}
\|v\|_{W_A^{k,p}(X)} := \left(\sum_{j=0}^k \int_X |\nabla_A^j v|^p\,d\vol_g \right)^{1/p},
\end{equation}
when $1\leq p<\infty$ and
\begin{equation}
\label{eq:Sobolev_norm_WAkp_sections_vector_bundle_over_manifold_infinite_p}
\|v\|_{W_A^{k,\infty}(X)} := \sum_{j=0}^k \esssup_X |\nabla_A^j v|,
\end{equation}
when $p=\infty$, where $v \in W_A^{k,p}(X; V)$. If $k=0$, then we denote
\[
  \|v\|_{W^{0,p}(X)} = \|v\|_{L^p(X)}.
\]
For $p \in [1,\infty)$ and nonnegative integers $k$, we use \cite[Theorem 3.12]{AdamsFournier} (applied to $W_A^{k,p}(X;V)$ and noting that $X$ is a closed manifold) and Banach-space duality to define
\[
W_A^{-k,p'}(X;V) := \left(W_A^{k,p}(X;V)\right)^*,
\]
where $p'\in (1,\infty]$ is the dual exponent defined by $1/p+1/p'=1$ and we use the fiber metric on $V$ to replace $V^*$ by $V$ on the left-hand side. Elements of the Banach-space dual $(W_A^{k,p}(X;V))^*$ may be characterized via \cite[Section 3.10]{AdamsFournier} as distributions in the Schwartz space $\sD'(X;V)$ \cite[Section 1.57]{AdamsFournier}.

As our first application of Theorem \ref{mainthm:Lojasiewicz-Simon_gradient_inequality}, we have the following generalization of \cite[Theorem 23.17]{Feehan_yang_mills_gradient_flow_v4} from the case of the pure Yang--Mills energy function \eqref{eq:Yang--Mills_energy_function}, when $p=2$ and $X$ has dimension $d=2,3$, or $4$, and R\r{a}de's \cite[Proposition 7.2]{Rade_1992}, when $p=2$ and $X$ has dimension $d=2$ or $3$.

\begin{mainthm}[{\L}ojasiewicz--Simon gradient inequality for the boson coupled Yang--Mills energy function]
\label{mainthm:Lojasiewicz-Simon_gradient_inequality_boson_Yang--Mills_energy_function}
(See Feehan and Maridakis \cite[Theorem 4 and Corollary 7]{Feehan_Maridakis_Lojasiewicz-Simon_coupled_Yang-Mills_v6}.)  
Let $(X,g)$ be a closed, smooth Riemannian manifold of dimension $d\geq 2$, let $G$ be a compact Lie group, let $P$ be a smooth principal $G$-bundle over $X$, and let $E = P\times_\varrho\EE$ be a smooth Hermitian vector bundle over $X$ defined by a finite-dimensional unitary representation $\varrho: G \to \Aut_\CC(\EE)$. Let $A_1$ be a $C^\infty$ reference connection on $P$ and let $(A_\infty,\Phi_\infty)$ be a boson coupled Yang--Mills pair on $(P,E)$ for $g$ of class $W^{1,q}$, with $q \in [2,\infty)$ obeying $q > d/2$. If $p \in [2,\infty)$ obeys $d/2 \leq p \leq q$, then the gradient map,
\[
\sM_g: (A_1,0)+W_{A_1}^{1,p}(X;\Lambda^1\otimes\ad P\oplus E)
\to W_{A_1}^{-1,p}(X;\Lambda^1\otimes\ad P\oplus E),
\]
is \emph{real analytic} and there are constants $Z \in (0, \infty)$ and $\sigma \in (0,1]$ and $\theta \in [1/2,1)$, depending on $A_1$, $(A_\infty,\Phi_\infty)$, $g$, $G$, $p$, and $q$, with the following significance. If $(A,\Phi)$ is a $W^{1,q}$ Sobolev pair on $(P,E)$ obeying the \emph{{\L}ojasiewicz--Simon neighborhood} condition,
\begin{equation}
\label{eq:Lojasiewicz-Simon_gradient_inequality_boson_Yang--Mills_pair_neighborhood}
\|(A,\Phi) - (A_\infty,\Phi_\infty)\|_{W^{1,p}_{A_1}(X)} < \sigma,
\end{equation}
then the boson coupled Yang--Mills energy function \eqref{eq:Boson_Yang--Mills_energy_function} obeys the \emph{{\L}ojasiewicz--Simon gradient inequality}
\begin{equation}
\label{eq:Lojasiewicz-Simon_gradient_inequality_boson_Yang--Mills_energy_function}
\|\sM_g(A,\Phi)\|_{W^{-1,p}_{A_1}(X)}
\geq
Z|\sE_g(A,\Phi) - \sE_g(A_\infty,\Phi_\infty)|^\theta.
\end{equation}
Moreover, the inequality \eqref{eq:Lojasiewicz-Simon_gradient_inequality_boson_Yang--Mills_energy_function} holds with the Banach-space norm on $W_{A_1}^{-1,p}(X;\Lambda^1\otimes\ad P\oplus E)$ replaced by the Hilbert-space norm on $W_{A_1}^{-1,2}(X;\Lambda^1\otimes\ad P\oplus E)$.
\end{mainthm}

Similarly, for the fermion coupled Yang--Mills energy function, we have the

\begin{mainthm}[{\L}ojasiewicz--Simon gradient inequality for the fermion coupled Yang--Mills energy function]
\label{mainthm:Lojasiewicz-Simon_gradient_inequality_fermion_Yang--Mills_energy_function}
(See Feehan and Maridakis \cite[Theorem 6 and Corollary 8]{Feehan_Maridakis_Lojasiewicz-Simon_coupled_Yang-Mills_v6}.)    
Assume the hypotheses of Theorem
\ref{mainthm:Lojasiewicz-Simon_gradient_inequality_boson_Yang--Mills_energy_function},
except that we require that $X$ admit a \spinc structure $(\rho,W)$,
replace the role of $\sE_g$ in
\eqref{eq:Boson_Yang--Mills_energy_function} by $\sF_g$ in
\eqref{eq:Fermion_Yang--Mills_energy_function}, and replace the role
of the pair $(A,\Phi)$ and critical point $(A_\infty,\Phi_\infty)$ of
$\sE_g$ by the pair $(A,\Psi)$ and critical point
$(A_\infty,\Psi_\infty)$ of $\sF_g$, where $\Psi$ and $\Psi_\infty$
are sections of $W\otimes E$. Then the conclusions of Theorem \ref{mainthm:Lojasiewicz-Simon_gradient_inequality_boson_Yang--Mills_energy_function} hold \emph{mutatis mutandis} for $\sF_g$.
\end{mainthm}

The proofs of Theorems \ref{mainthm:Lojasiewicz-Simon_gradient_inequality_boson_Yang--Mills_energy_function} and \ref{mainthm:Lojasiewicz-Simon_gradient_inequality_fermion_Yang--Mills_energy_function} are provided in \cite{Feehan_Maridakis_Lojasiewicz-Simon_coupled_Yang-Mills_v6}.

\begin{rmk}[{\L}ojasiewicz--Simon gradient inequality for the Yang--Mills energy function over a Riemann surface]
\label{rmk:Lojasiewicz-Simon_gradient_inequality_Yang--Mills_Riemann_surface_Morse--Bott}
When $d=2$, it is known in many cases (see \cite{Feehan_lojasiewicz_inequality_ground_state}) that the pure Yang--Mills energy function obeys the Morse--Bott condition in the sense of Definition \ref{defn:Morse-Bott_function} and so by Theorem \ref{mainthm:Optimal_Lojasiewicz-Simon_gradient_inequality_Morse-Bott_energy_functional}, one has the optimal {\L}ojasiewicz--Simon exponent, $\theta = 1/2$, in those cases.
\end{rmk}

\subsection{Outline of the article}
\label{subsec:Outline}
In Section
\ref{sec:Lojasiewicz-Simon_gradient_inequality_analytic_and_Morse-Bott_energy_functionals},
we derive an abstract {\L}ojasiewicz--Simon gradient inequality for an
analytic function over a Banach space, proving Theorems
\ref{mainthm:Lojasiewicz-Simon_gradient_inequality_dualspace}, and \ref{mainthm:Lojasiewicz-Simon_gradient_inequality}, \ref{mainthm:Lojasiewicz-Simon_gradient_inequality2}, and for a Morse--Bott function over a Banach space, proving Theorem \ref{mainthm:Optimal_Lojasiewicz-Simon_gradient_inequality_Morse-Bott_energy_functional}.

We refer the reader to Feehan \cite[Appendix
C]{Feehan_lojasiewicz_inequality_all_dimensions_morse-bott} for a review
of the relationship between the Morse--Bott property and 
integrability in the setting of harmonic maps. In Feehan and Maridakis \cite[Appendix E]{Feehan_Maridakis_Lojasiewicz-Simon_coupled_Yang-Mills_v6}, we give a
review of Huang's \cite[Theorem 2.4.2 (i)]{Huang_2006} for the
{\L}ojasiewcz--Simon gradient inequality for analytic functions on
Banach spaces. Next, Feehan and Maridakis \cite[Appendix
D]{Feehan_Maridakis_Lojasiewicz-Simon_harmonic_maps_v5} provide a few
elementary observations from linear functional analysis that
illuminate the hypotheses of Theorems
\ref{mainthm:Lojasiewicz-Simon_gradient_inequality_dualspace} and
\ref{mainthm:Lojasiewicz-Simon_gradient_inequality}. Lastly,
Feehan and Maridakis \cite[Appendix
D]{Feehan_Maridakis_Lojasiewicz-Simon_coupled_Yang-Mills_v6} includes
an explanation of why Theorem \ref{mainthm:Lojasiewicz-Simon_gradient_inequality2} is so useful in applications to questions of global existence and convergence of gradient flows for energy functions on Banach spaces under the validity of the {\L}ojasiewicz--Simon gradient inequality.

\subsection{Notation and conventions}
\label{subsec:Notation}
For the notation of function spaces, we follow Adams and Fournier \cite{AdamsFournier}, and for functional analysis, Brezis \cite{Brezis} and Rudin \cite{Rudin}. As usual, we let $\NN:=\left\{0,1,2,3,\ldots\right\}$ denote the set of non-negative integers. We use $C=C(*,\ldots,*)$ to denote a constant which depends at most on the quantities appearing on the parentheses. In a given context, a constant denoted by $C$ may have different values depending on the same set of arguments and may increase from one inequality to the next. If $\sX, \sY$ is a pair of Banach spaces, then $\sL(\sX,\sY)$ denotes the Banach space of all continuous linear operators from $\sX$ to $\sY$. We denote the continuous dual space of $\sX$ by $\sX^* = \sL(\sX,\RR)$. We write $\alpha(x) = \langle x, \alpha \rangle_{\sX\times\sX^*}$ for the pairing between $\sX$ and its dual space, where $x \in \sX$ and $\alpha \in \sX^*$. If $T \in \sL(\sX, \sY)$, then its adjoint is denoted by $T^* \in \sL(\sY^*,\sX^*)$, where $(T^*\beta)(x) := \beta(Tx)$ for all $x \in \sX$ and $\beta \in \sY^*$.

\subsection{Acknowledgments}
\label{subsec:Acknowledgments}
Paul Feehan is very grateful to the Max Planck Institute for Mathematics, Bonn, and the Institute for Advanced Study, Princeton, for their support during the preparation of this article. He would like to thank Peter Tak{\'a}{\v{c}} for many helpful conversations regarding the {\L}ojasiewicz--Simon gradient inequality, for explaining his proof of \cite[Proposition 6.1]{Feireisl_Takac_2001} and how it can be generalized as described in this article, and for his kindness when hosting his visit to the Universit{\"a}t Rostock. He would also like to thank Brendan Owens for several useful conversations and his generosity when hosting his visit to the University of Glasgow. He thanks Haim Brezis for helpful comments on $L\log L$ spaces, Alessandro Carlotto for useful comments regarding the integrability of critical points of the Yamabe function, Sagun Chanillo for detailed and generous assistance with Hardy spaces, and Brendan Owens and Chris Woodward for helpful communications and comments regarding Morse--Bott theory. Both authors are very grateful to an anonymous referee for pointing out an error in an earlier statement and proof of Theorem \ref{mainthm:Lojasiewicz-Simon_gradient_inequality_energy_functional_Riemannian_manifolds} and to all editors and referees for their careful reading of our manuscript, corrections, and suggestions for improvements.

\section{{\L}ojasiewicz--Simon gradient inequalities for analytic and Morse--Bott functions}
\label{sec:Lojasiewicz-Simon_gradient_inequality_analytic_and_Morse-Bott_energy_functionals}
Our goal in this section is to prove the abstract {\L}ojasiewicz--Simon gradient inequalities for analytic and Morse--Bott functions stated in our Introduction, namely Theorems \ref{mainthm:Lojasiewicz-Simon_gradient_inequality_dualspace}, \ref{mainthm:Lojasiewicz-Simon_gradient_inequality}, \ref{mainthm:Lojasiewicz-Simon_gradient_inequality2}, and \ref{mainthm:Optimal_Lojasiewicz-Simon_gradient_inequality_Morse-Bott_energy_functional}. In Section \ref{subsec:Nonlinear_functional_analysis_preliminaries}, we review or establish some of the results in nonlinear functional analysis that we will subsequently require. As in Simon's original approach to the proof of his gradient inequality for analytic functions, we establish the result in infinite dimensions via a Lyapunov--Schmidt reduction to finite dimensions and an application of the finite-dimensional {\L}ojasiewicz gradient inequality, whose statement we recall in Section \ref{subsec:Finite-dimensional_Lojasiewicz-Simon_gradient_inequalities}. Sections \ref{subsec:Lojasiewicz-Simon_gradient_inequalities} and \ref{subsec:Lojasiewicz-Simon_gradient_inequalities_Hilbert_space} contain the proofs of the corresponding gradient inequalities for infinite-dimensional applications.

\subsection{Preliminaries on nonlinear functional analysis}
\label{subsec:Nonlinear_functional_analysis_preliminaries}
In this subsection, we gather a few elementary observations from nonlinear functional analysis that we will subsequently need.

\subsubsection{Smooth and analytic inverse and implicit function theorems for maps of Banach spaces}
\label{subsubsec:Smooth_and_analytic_inverse_and_implicit_function_theorems}
Statements and proofs of the Inverse Function Theorem for $C^k$ maps of Banach spaces are provided by Abraham, Marsden, and Ratiu \cite[Theorem 2.5.2]{AMR}, Deimling \cite[Theorem 4.15.2]{Deimling_1985}, and Zeidler \cite[Theorem 4.F]{Zeidler_nfaa_v1}; statements and proofs of the Inverse Function Theorem for \emph{analytic} maps of Banach spaces are provided by Berger \cite[Corollary 3.3.2]{Berger_1977} (complex), Deimling \cite[Theorem 4.15.3]{Deimling_1985} (real or complex), and Zeidler \cite[Corollary 4.37]{Zeidler_nfaa_v1} (real or complex). The corresponding $C^k$ or Analytic Implicit Function Theorems are proved in the standard way as corollaries, for example \cite[Theorem 2.5.7]{AMR} and \cite[Theorem 4.H]{Zeidler_nfaa_v1}.

\subsubsection{Differentiable and analytic maps on Banach spaces}
\label{subsubsec:Huang_2-1A}
We refer to \cite[Section 2.1A]{Huang_2006}; see also \cite[Section 2.3]{Berger_1977}. Let $\sX, \sY$ be a pair of Banach spaces, let $\sU\subset\sX$ be an open subset, and $\sF:\sU \to \sY$ be a map. Recall that $\sF$ is  \emph{Fr\'echet differentiable} at a point $x \in \sU$ with derivative $\sF'(x) \in \sL(\sX,\sY)$ if
\[
\lim_{y\to 0} \frac{1}{\|y\|_\sX}\|\sF(x + y) - \sF(x) - \sF'(x)y\|_\sY = 0.
\]
Recall from \cite[Definition 2.3.1]{Berger_1977}, \cite[Definition 15.1]{Deimling_1985}, \cite[Definition 8.8]{Zeidler_nfaa_v1} that $\sF$ is (real) \emph{analytic} at $x \in \sU$ if there exist a constant $r > 0$, a sequence of continuous symmetric $n$-linear forms $L_n:\otimes^n\sX \to \sY$ such that
\[
  \sum_{n\geq 1} \|L_n\| r^n < \infty,
\]
and a positive constant $\delta = \delta(x)$ such that
\begin{equation}
\label{Taylor_expansion}
\sF(x + y) = \sF(x) + \sum_{n\geq 1} L_n(y^n), \quad \|y\|_\sX < \delta,
\end{equation}
where $y^n \equiv (y,\ldots,y) \in \sX \times \cdots \times \sX$ ($n$-fold product). If $\sF$ is differentiable (respectively, analytic) at every point $x \in \sU$, then $\sF$ is differentiable (respectively, analytic) on $\sU$. It is a useful observation that if $\sF$ is analytic at $x\in\sX$, then it is analytic on a ball $B_x(\varepsilon)$ (see \cite[p. 1078]{Whittlesey_1965}).

\subsubsection{Gradient maps}
\label{subsubsec:Huang_2-1B}
We recall the following basic facts concerning gradient maps.

\begin{prop}[Properties of gradient maps]
\label{prop:Huang_2-1-2}
(See Huang \cite[Proposition 2.1.2]{Huang_2006}.)
Let $\sU$ be an open subset of a Banach space $\sX$, let $\sY$ be a Banach space that is continuously embedded in $\sX^*$, and let $\sM:\sU \to \sY \subset \sX^*$ be a continuous map. Then the following hold.
\begin{enumerate}
\item If $\sM$ is a gradient map for $\sE$, then
\[
\sE(x_1) - \sE(x_0) = \int_0^1 \langle x_1-x_0, \sM(tx_1 + (1-t)x_0) \rangle_{\sX\times\sX^*} \,dt, \quad\forall\, x_0, x_1 \in \sU.
\]

\item
\label{item:Huang_2-1-2_gradient_map_iff_symmetric_derivatives}
If $\sM$ is of class $C^1$, then $\sM$ is a gradient map if and only if all of its Fr\'echet derivatives, $\sM'(x)$ for $x \in \sU$, are symmetric in the sense that
\[
\langle w,\sM'(x)v \rangle_{\sX\times\sX^*} = \langle v,\sM'(x)w \rangle_{\sX\times\sX^*}, \quad\forall\, x \in \sU \text{ and } v,w \in \sX.
\]

\item
\label{item:Huang_2-1-2_analytic_gradient_map_implies_analytic_potential}
If $\sM$ is an analytic gradient map, then any potential function $\sE:\sU\to\RR$ for $\sM$ is analytic as well.
\end{enumerate}
\end{prop}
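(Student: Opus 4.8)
The plan is to treat the three assertions in turn, all of which rest on the fundamental theorem of calculus applied along line segments in $\sU$. For definiteness I take $\sU$ convex (or else argue on convex neighborhoods of each point and patch over connected components, the resulting local potentials then agreeing up to additive constants, which is harmless here since the gradient condition is local). Throughout I abbreviate $\gamma(t) := x_0 + t(x_1-x_0)$ for a segment lying in $\sU$.

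For the first assertion, set $\phi(t) := \sE(\gamma(t))$. Since $\sM$ is continuous and $\sE'(x)v = \langle v,\sM(x)\rangle_{\sX\times\sX^*}$, the potential $\sE$ is $C^1$, so the chain rule gives $\phi'(t) = \langle x_1-x_0,\sM(\gamma(t))\rangle_{\sX\times\sX^*}$, which is continuous in $t$; integrating over $[0,1]$ yields the stated formula. This step is routine.

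For the second assertion, the forward implication uses that a $C^1$ gradient map has a $C^2$ potential $\sE$ (because $x\mapsto\sE'(x)=\langle\cdot,\sM(x)\rangle_{\sX\times\sX^*}$ is then $C^1$), whose second Fr\'echet derivative is a symmetric bilinear form by the symmetry of second derivatives (Schwarz's theorem in Banach spaces); since $\sE''(x)(v,w)=\langle w,\sM'(x)v\rangle_{\sX\times\sX^*}$, symmetry in $(v,w)$ gives exactly the claimed identity. For the converse I would construct a potential by the Poincar\'e lemma formula $\sE(x):=\int_0^1\langle x-x_0,\sM(x_0+t(x-x_0))\rangle_{\sX\times\sX^*}\,dt$ on a convex neighborhood of a base point $x_0$, differentiate under the integral sign (justified by $\sM\in C^1$ and compactness of $[0,1]$), and, after invoking symmetry to rewrite $\langle x-x_0,\sM'(\gamma(t))h\rangle = \langle h,\sM'(\gamma(t))(x-x_0)\rangle$, recognize the integrand of $\sE'(x)h$ as the exact derivative $\langle h,\frac{d}{dt}(t\,\sM(\gamma(t)))\rangle_{\sX\times\sX^*}$; evaluating at the endpoints gives $\sE'(x)h=\langle h,\sM(x)\rangle_{\sX\times\sX^*}$. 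The main work here is the differentiation under the integral and the use of symmetry to produce an exact $t$-derivative.

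For the third assertion, I would combine the integral formula of the first part with the local power series of the analytic map $\sM$. Fixing $x$ and writing $\sM(x+y)=\sM(x)+\sum_{n\geq 1}M_n(y^n)$ for $\|y\|_\sX$ small, substitution of $x+ty$ gives $\sM(x+ty)=\sM(x)+\sum_{n\geq 1}t^n M_n(y^n)$, and term-by-term integration (legitimate because the series converges uniformly in $t\in[0,1]$ for small $\|y\|_\sX$) yields $\sE(x+y)-\sE(x)=\langle y,\sM(x)\rangle_{\sX\times\sX^*}+\sum_{n\geq 1}\frac{1}{n+1}\langle y,M_n(y^n)\rangle_{\sX\times\sX^*}$. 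Setting $L_1(y):=\langle y,\sM(x)\rangle_{\sX\times\sX^*}$ and $L_{n+1}(y^{n+1}):=\frac{1}{n+1}\langle y,M_n(y^n)\rangle_{\sX\times\sX^*}$ defines continuous symmetric multilinear forms on $\sX$, and continuity of the embedding $\tilde\sX\subset\sX^*$ gives $\|L_{n+1}\|\leq \frac{C}{n+1}\|M_n\|$, whence $\sum_{n}\|L_{n+1}\|r^{n+1}\leq Cr\sum_n\|M_n\|r^n<\infty$. This is precisely the convergent Taylor expansion required by the definition of analyticity, so $\sE$ is analytic at $x$. The only delicate point is the uniform convergence needed for the term-by-term integration, which follows from the analytic majorant bounds on the $M_n$; I expect this, together with the differentiation-under-the-integral step in the converse of the second part, to be the main technical obstacles.
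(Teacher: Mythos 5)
Your proposal is correct, but for the one item the paper actually proves it takes a genuinely different route, and for the other two items it supplies arguments the paper omits entirely (the paper simply cites Huang's Proposition 2.1.2 for Items (1) and (2)). For Item (3) the paper's proof is a two-line composition argument: since $\sM:\sU\to\sY$ is analytic and the embedding $\imath:\sY\to\sX^*$ is a bounded linear operator, the differential $\sE'=\imath\circ\sM:\sU\to\sX^*$ is analytic, and hence $\sE$ is analytic. This is shorter but quietly invokes the fact that a $C^1$ function on a Banach space with analytic derivative is itself analytic. Your approach instead constructs the Taylor expansion of $\sE$ explicitly by feeding the power series of $\sM$ into the integral identity of Item (1) and integrating term by term, with the majorant bound $\|L_{n+1}\|\leq \frac{C}{n+1}\|M_n\|$ coming from the norm of the embedding; this is more work but entirely self-contained and makes the radius of convergence visible. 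Both are valid. Two small points of hygiene: the embedding in the hypothesis is $\sY\subset\sX^*$ (you wrote $\tilde\sX\subset\sX^*$ at one point), and the forms $L_{n+1}$ as you define them on the diagonal should be read as the symmetrizations of $(y_0,\ldots,y_n)\mapsto\frac{1}{n+1}\langle y_0,M_n(y_1,\ldots,y_n)\rangle_{\sX\times\sX^*}$, whose operator norms are still bounded by $\frac{C}{n+1}\|M_n\|$ since symmetrization is an average over permutations; neither affects correctness. Your explicit handling of the convexity issue in Item (1) is also more careful than the statement itself, which tacitly assumes the segment from $x_0$ to $x_1$ lies in $\sU$.
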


\begin{proof}
We prove Item \eqref{item:Huang_2-1-2_analytic_gradient_map_implies_analytic_potential} since this proof is omitted in \cite{Huang_2006}. Let $\imath: \sY \subset \sX^*$ denote the given continuous embedding. Because $\sM:\sU \to \sY$ is real analytic by hypothesis and the fact that the composition of a real analytic map with a bounded linear operator is real analytic, the derivative $\sE' = \imath \circ \sM: \sU\to \sX^*$ is real analytic as well. Hence, $\sE:\sU\to\RR$ is real analytic.
\end{proof}

\subsection{Finite-dimensional {\L}ojasiewicz and Simon gradient inequalities}
\label{subsec:Finite-dimensional_Lojasiewicz-Simon_gradient_inequalities}
We recall the finite-dimensional versions of the {\L}ojasiewicz--Simon gradient inequality.

\begin{thm}[Finite-dimensional {\L}ojasiewicz and Simon gradient inequalities]
\label{thm:Huang_2-3-1}
(See Huang \cite[Theorem 2.3.1]{Huang_2006}.)
Let $U \subset \RR^n$ be an open subset, let $z \in U$ be a point, and let $\sE: U \to \RR$ be a function.
\begin{enumerate}
\item
\label{item:Huang_theorem_2-3-1_i}
If $\sE$ is real analytic on a neighborhood of $z$ and $\sE'(z) = 0$, then there exist constants $c\in (0,\infty)$ and $\theta \in (0,1)$ and $\sigma > 0$ such that
\begin{equation}
\label{eq:Lojasiewicz_1984_star}
|\sE'(x)| \geq c|\sE(x) - \sE(z)|^\theta,
\quad\forall\, x \in \RR^n \text{ such that } |x - z| < \sigma.
\end{equation}

\item
\label{item:Huang_theorem_2-3-1_ii}
Assume that $\sE$ is a $C^2$ function and $\sE'(z) = 0$. If the connected component $C$ of the critical point set $\{x \in U : \sE'(x) = 0\}$ that contains $z$ has the same dimension as the kernel of the Hessian $\sE''(z)$ near $z$, and $z$ lies in the interior of $C$, then there are positive constants $c$ and $\sigma$ such that
\begin{equation}
\label{eq:Simon_1996_lemma_1_page_80}
|\sE'(x)| \geq c|\sE(x) - \sE(z)|^{1/2},
\quad\forall\, x \in \RR^n  \text{ such that }  |x - z| < \sigma.
\end{equation}
\end{enumerate}
\end{thm}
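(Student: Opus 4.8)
The two parts have genuinely different characters, so after a common reduction I would treat them separately. In both cases translate so that $z=0$ and subtract a constant so that $\sE(z)=0$; for part \eqref{item:Huang_theorem_2-3-1_i} write $f:=\sE-\sE(z)$, so that $f(0)=0$, $f'(0)=0$, and the assertion becomes $|f'(x)|\ge |f(x)|^{\theta}$ near $0$, while for part \eqref{item:Huang_theorem_2-3-1_ii} I would argue directly with $\sE$ and $\sE(z)$.

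\textbf{Part \eqref{item:Huang_theorem_2-3-1_i}: the analytic case.} This is the classical {\L}ojasiewicz gradient inequality \cite{Lojasiewicz_1965}. The first point I would establish is that, after shrinking the neighborhood, every critical point of $f$ near $0$ lies in $\{f=0\}$: the critical set $\{f'=0\}$ is a real analytic set, hence admits a locally finite stratification into connected analytic manifolds; $f$ is constant on each stratum (its derivative along the stratum vanishes); and only finitely many strata meet a small ball, so those whose closure contains $0$ carry the value $f(0)=0$ while the rest are bounded away from $0$. Consequently $\{f'=0\}\subseteq\{f=0\}$ as germs at $0$. I would then invoke the curve selection lemma from real analytic geometry: were the inequality to fail for every $\theta<1$, one would produce a real analytic arc $\gamma(s)$ with $\gamma(0)=0$ along which $|f'(\gamma(s))|$ is dominated by every power of $|f(\gamma(s))|$. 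Expanding $f\circ\gamma$ and $|f'\circ\gamma|$ in Puiseux series in $s$ and using the chain rule identity $(f\circ\gamma)'(s)=\langle f'(\gamma(s)),\gamma'(s)\rangle_{\RR^n}$, which forces the leading order of $|f'\circ\gamma|$ to be at most that of $|f\circ\gamma|$ minus one power of $s$, yields a contradiction and at the same time pins down an admissible exponent $\theta\in[1/2,1)$.

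\textbf{The main obstacle.} I expect this curve-selection step to be the crux of the whole theorem: it is exactly here that the subanalytic geometry (equivalently, resolution of singularities) enters, and it is what improves the exponent below $1$. The naive two-function {\L}ojasiewicz separation inequality $|f|^{N}\le C|f'|^{2}$, valid because $\{f'=0\}\subseteq\{f=0\}$, only bounds $|f'|$ below by $|f|^{N/2}$ with an exponent that need not be smaller than $1$; the gain into the range $\theta<1$ is a genuine feature of the gradient, visible through the chain rule above, rather than of an arbitrary pair of analytic functions.

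\textbf{Part \eqref{item:Huang_theorem_2-3-1_ii}: the $C^2$ case with optimal exponent.} Here I would argue directly, without analyticity, in the spirit of Simon \cite{Simon_1996}. The hypotheses amount to the Morse--Bott normal condition that, near $z$, the component $C$ of the critical set is a $C^1$ submanifold with $T_zC=\ker\Hess_\sE(z)$: differentiating $\sE'(\gamma(t))=0$ along a curve in $C$ gives $T_zC\subseteq\ker\Hess_\sE(z)$, and the dimension equality forces equality, so $\Hess_\sE(z)$ restricts to an isomorphism of $(T_zC)^{\perp}$. Let $\pi(x)\in C$ be a nearest point, $r(x):=\dist(x,C)=|x-\pi(x)|$, so that $x-\pi(x)\perp T_{\pi(x)}C$. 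I would then prove two Taylor estimates, uniform for $x$ near $z$: \emph{(a)} $|\sE'(x)|\ge c\,r(x)$, using $\sE'(\pi(x))=0$ together with the first-order expansion of the $C^1$ map $\sE'$ at $\pi(x)$ and the fact that, by continuity of $\Hess_\sE$ and nondegeneracy at $z$, the operator $\Hess_\sE(\pi(x))$ is uniformly invertible on the directions normal to $C$; and \emph{(b)} $|\sE(x)-\sE(z)|\le C\,r(x)^2$, using $\sE(\pi(x))=\sE(z)$ since $\sE$ is constant on the connected critical submanifold $C$, together with the second-order Taylor expansion of $\sE$ at the critical point $\pi(x)$. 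Combining \emph{(a)} and \emph{(b)} gives $|\sE'(x)|\ge c\,r(x)\ge c'\,|\sE(x)-\sE(z)|^{1/2}$, which is \eqref{eq:Simon_1996_lemma_1_page_80}. The only delicate points are the $C^1$ regularity of $C$ and the well-definedness of $\pi$ under the bare $C^2$ hypothesis, and the uniformity of the constants; both follow after shrinking the neighborhood from the constant-rank property of $\Hess_\sE$ along $C$ (the content of the dimension hypothesis) and the continuity of $\Hess_\sE$, with $z\in\Int C$ ensuring that $r(x)$ genuinely measures the transverse displacement controlled by the nondegenerate part of the Hessian.
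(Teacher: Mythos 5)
The paper does not actually prove Theorem \ref{thm:Huang_2-3-1}: both parts are quoted from the literature, part \eqref{item:Huang_theorem_2-3-1_i} being attributed to {\L}ojasiewicz and to Bierstone--Milman, and part \eqref{item:Huang_theorem_2-3-1_ii} to Simon and to Haraux--Jendoubi. So your proposal is measured against those sources rather than against an argument in the text. Your part \eqref{item:Huang_theorem_2-3-1_ii} is essentially Simon's proof and is sound: the dimension hypothesis plus lower semicontinuity of the rank forces $\Ker\Hess_\sE(y)=T_yC$ along $C$ near $z$, hence uniform invertibility of $\Hess_\sE(y)$ on the normal directions; the two uniform Taylor estimates $|\sE'(x)|\ge c\,r(x)$ and $|\sE(x)-\sE(z)|\le C\,r(x)^2$ at a nearest point $\pi(x)\in C$ then combine to give the exponent $1/2$. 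You are also right that only $\pi(x)\in C$, $|x-\pi(x)|=r(x)$ and $x-\pi(x)\perp T_{\pi(x)}C$ are used, so uniqueness and continuity of $\pi$ are not needed.

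Part \eqref{item:Huang_theorem_2-3-1_i} has a genuine gap at exactly the step you flag as the crux. The curve selection lemma applies to one semianalytic set at a time, i.e.\ to the bad set of one fixed rational exponent $\theta$; since these bad sets shrink as $\theta$ increases, their intersection over all $\theta<1$ may be empty away from the origin, and no single arc ``along which $|f'(\gamma(s))|$ is dominated by every power of $|f(\gamma(s))|$'' is produced. What the Puiseux computation actually yields is this: if the inequality with exponent $\theta$ fails on a set accumulating at $0$, curve selection gives an arc $\gamma$ in that set, and with $k=\mathrm{ord}_0(f\circ\gamma)$, $m=\mathrm{ord}_0|\gamma|$, the chain-rule bound $|f'(\gamma(s))|\ge |(f\circ\gamma)'(s)|/|\gamma'(s)|\gtrsim s^{k-m}$ forces $\theta\le 1-m/k$. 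This contradicts the choice of $\theta$ only if $\theta$ was taken larger than $1-m/k$ for an arc not known in advance; running it for $\theta=1-1/j$ merely produces arcs with $k_j/m_j\ge j$, which is no contradiction unless one already knows $\sup_\gamma k/m<\infty$. That uniform bound is precisely the hard quantitative content of the cited sources (e.g.\ the Puiseux expansion of the subanalytic function $t\mapsto\sup\{|f(x)|:|x|\le\sigma,\ |f'(x)|\le t\}$ together with the separate argument that its exponent exceeds $1$, in Bierstone--Milman's Proposition 6.8). Your sketch correctly locates the difficulty and correctly notes that the two-function separation inequality alone cannot deliver $\theta<1$, but it does not close this step; the preliminary reduction $\{f'=0\}\subseteq\{f=0\}$ via stratification is fine.
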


Theorem \ref{thm:Huang_2-3-1} \eqref{item:Huang_theorem_2-3-1_i} is well known and was stated by {\L}ojasiewicz in \cite{Lojasiewicz_1963} and proved by him as \cite[Proposition 1, p. 92 (67)]{Lojasiewicz_1965} and Bierstone and Milman as \cite[Proposition 6.8]{BierstoneMilman}; see also the statements by Chill and Jendoubi \cite[Proposition 5.1 (i)]{Chill_Jendoubi_2003} and by {\L}ojasiewicz \cite[p. 1592]{Lojasiewicz_1993}.

Theorem \ref{thm:Huang_2-3-1} \eqref{item:Huang_theorem_2-3-1_ii} was proved (in certain Banach settings rather than just a Euclidean space setting) by Simon as \cite[Lemma 3.13.1]{Simon_1996} and Haraux and Jendoubi as \cite[Theorem 2.1]{Haraux_Jendoubi_2007}; see also the statement by Chill and Jendoubi \cite[Proposition 5.1 (ii)]{Chill_Jendoubi_2003}.

{\L}ojasiewicz used methods of \emph{semi-analytic sets} \cite{Lojasiewicz_1965} to prove Theorem \ref{thm:Huang_2-3-1} \eqref{item:Huang_theorem_2-3-1_i}. In general, so long as the constant $c$ is positive, its actual value is irrelevant to applications; the value of $\theta$ in the infinite-dimensional setting \cite[Theorem 2.4.2 (i)]{Huang_2006}, at least, is restricted to the range $[1/2, 1)$ and $\theta=1/2$ is optimal \cite[Theorem 2.7.1]{Huang_2006}.

\subsection{{\L}ojasiewicz--Simon gradient inequalities for analytic or Morse--Bott functions on Banach spaces}
\label{subsec:Lojasiewicz-Simon_gradient_inequalities}
We note that if $\sE : \sU \to \RR$ is a $C^2$ function on an open subset $\sU$ of a Banach space $\sX$, then its Hessian at a point $x_0 \in \sU$ is symmetric, that is
\begin{equation}
\label{eq:symmetry}
\langle x,\sE''(x_0)y\rangle_{\sX\times \sX^*} = \langle y, \sE''(x_0)x\rangle_{\sX\times \sX^*},
\end{equation}
for all $x, y\in \sX$; compare Proposition \ref{prop:Huang_2-1-2} \eqref{item:Huang_2-1-2_gradient_map_iff_symmetric_derivatives}.

Let $\tilde\sX$ and $\sX^*$ denote Banach spaces as in the statement of Theorem \ref{mainthm:Lojasiewicz-Simon_gradient_inequality} and let $K \subset \sX$ denote a finite-dimensional subspace. We shall identify $K$ with its images in $\tilde\sX$ and $\sX^*$. By \cite[Definition 4.20 and Lemma 4.21 (a)]{Rudin}, the subspace $K$ has a closed complement $\sY \subset \sX^*$ and there exists a continuous projection operator,
\begin{equation}
\label{eq:Projection_Xdual_onto_K}
\Pi : \sX^* \to K \subset \sX^*.
\end{equation}
The splitting $\sX^* = \sY \oplus K$ as a Banach space into closed subspaces induces corresponding splittings $\sX=\sX_0\oplus K$ and $\tilde\sX = \tilde\sX_0\oplus K$, where $\sX_0 := \sY\cap\sX$ and similarly for $\tilde\sX_0$. By restriction, the projection $\Pi:\sX^*\to \sX^*$ induces continuous projection operators with range $K$ on $\sX$ and $\tilde\sX$ that we continue to denote by $\Pi$. Hence, the projection $\Pi: \tilde\sX \to\tilde\sX$ restricts to a bounded linear operator $\Pi: \sX\to \tilde\sX$.

\begin{lem}[Properties of $C^2$ functions with Hessian operators that are Fredholm with index zero]
\label{lem:laux}
Assume the hypotheses of Theorem~\ref{mainthm:Lojasiewicz-Simon_gradient_inequality} and let $\Pi$ be as in \eqref{eq:Projection_Xdual_onto_K}, but now with $K = \Ker(\sE''(x_\infty):\sX\to\sX^*)$. Then there exist an open neighborhood $U_0 \subset \sU$ of $x_\infty$ and an open neighborhood $V_0\subset \tilde\sX$ of the origin such that the $C^1$ map,
\begin{equation}
\label{eq:Definition_Phi_map}
\Phi : \sX \supset \sU \ni x \mapsto \sM(x) +  \Pi(x-x_\infty) \in  \tilde\sX,
\end{equation}
when restricted to the set $U_0$, has a $C^1$ inverse $\Psi : V_0 \to U_0$. Moreover, there is a constant $C = C(\sM,U_0,V_0) \in [1,\infty)$ such that
\begin{equation}
\label{eq:aux}
\|\Psi(\Pi \alpha) - \Psi(\alpha)\|_{\sX} \leq C \|\sM(\Psi(\alpha))\|_{\tilde\sX},\quad \forall\, \alpha\in V_0.
\end{equation}
\end{lem}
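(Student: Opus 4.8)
The plan is to produce $\Psi$ by applying the $C^1$ Inverse Function Theorem to $\Phi$ at $x_\infty$, so the first task is to check that the Fr\'echet derivative $\Phi'(x_\infty) = \sM'(x_\infty) + \Pi$ is a Banach space isomorphism of $\sX$ onto $\tilde\sX$. Write $L := \sM'(x_\infty):\sX\to\tilde\sX$, and note that, since $\sM$ is the gradient map of $\sE$, the embedding $\tilde\sX\subset\sX^*$ intertwines $L$ with the Hessian $\sE''(x_\infty):\sX\to\sX^*$; in particular $K = \Ker\sE''(x_\infty) = \Ker L$ is finite-dimensional because $L$ is Fredholm of index zero.

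The crucial step is to show $\ran L\cap K = \{0\}$, and this is where symmetry of the Hessian and definiteness of the embedding $\sX\subset\sX^*$ enter. Suppose $y = Lv \in K$. Applying the symmetry of $L$ from Proposition~\ref{prop:Huang_2-1-2}\eqref{item:Huang_2-1-2_gradient_map_iff_symmetric_derivatives} with the test element $y\in K=\Ker L$,
\[
\langle y, Lv\rangle_{\sX\times\sX^*} = \langle v, Ly\rangle_{\sX\times\sX^*} = 0.
\]
Since $y\in K\subset\sX$ while $Lv = y$ is viewed in $\tilde\sX\subset\sX^*$, and the composite $\sX\subset\tilde\sX\subset\sX^*$ is the given definite embedding $\jmath$, the left-hand side equals $\langle y, \jmath(y)\rangle_{\sX\times\sX^*}$, namely the definite bilinear form evaluated at $(y,y)$. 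Definiteness forces $y = 0$, proving the claim. Consequently $\Phi'(x_\infty)$ is injective: if $(L+\Pi)v = 0$ then $Lv = -\Pi v\in K$, so $Lv\in\ran L\cap K = \{0\}$, whence $v\in K$ and $\Pi v = v$, giving $v = (L+\Pi)v = 0$. As a finite-rank perturbation of the index-zero Fredholm operator $L$, the map $L + \Pi$ is itself Fredholm of index zero, so injectivity upgrades to an isomorphism via the open mapping theorem.

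With $\Phi'(x_\infty)$ invertible and $\Phi(x_\infty) = \sM(x_\infty) + \Pi(x_\infty - x_\infty) = 0$, the $C^1$ Inverse Function Theorem (Section~\ref{subsubsec:Smooth_and_analytic_inverse_and_implicit function theorems}) yields open neighborhoods $U_0\subset\sU$ of $x_\infty$ and $V_0\subset\tilde\sX$ of the origin, together with a $C^1$ inverse $\Psi:V_0\to U_0$. I would then shrink $V_0$ to a convex neighborhood of the origin with $\Pi(V_0)\subset V_0$, possible because $\Pi$ is continuous with $\Pi 0 = 0$; this guarantees $\alpha,\Pi\alpha\in V_0$ for all $\alpha\in V_0$ and renders $\Psi$ Lipschitz on $V_0$ with some constant $L_\Psi$, since $\Psi'$ is continuous and $V_0$ is convex.

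Finally, the estimate \eqref{eq:aux} follows by comparing $\alpha$ with its projection. Applying $\Phi\circ\Psi = \id$ gives $\alpha = \sM(\Psi(\alpha)) + \Pi(\Psi(\alpha) - x_\infty)$; applying $\Pi$ and subtracting (using $\Pi^2 = \Pi$) yields
\[
\alpha - \Pi\alpha = (I - \Pi)\sM(\Psi(\alpha)),
\]
so that $\|\alpha - \Pi\alpha\|_{\tilde\sX}\leq\|I - \Pi\|_{\sL(\tilde\sX)}\,\|\sM(\Psi(\alpha))\|_{\tilde\sX}$. The Lipschitz bound on $\Psi$ then gives
\[
\|\Psi(\Pi\alpha) - \Psi(\alpha)\|_\sX \leq L_\Psi\|\Pi\alpha - \alpha\|_{\tilde\sX} \leq L_\Psi\|I - \Pi\|_{\sL(\tilde\sX)}\,\|\sM(\Psi(\alpha))\|_{\tilde\sX},
\]
and \eqref{eq:aux} holds with $C := \max\{1,\, L_\Psi\|I - \Pi\|_{\sL(\tilde\sX)}\}$. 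The main obstacle is the identification in the second paragraph, where one must carefully track the three embeddings to recognize $\langle y, Lv\rangle_{\sX\times\sX^*}$ as the definite form, thereby converting symmetry plus definiteness into the transversality $\ran L\cap K = \{0\}$; everything else is a routine application of the inverse function theorem and the mean value inequality.
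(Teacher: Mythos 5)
Your proposal is correct and follows essentially the same route as the paper: symmetry of the Hessian plus definiteness of the embedding $\sX\subset\sX^*$ to show $D\Phi(x_\infty)$ is injective, the index-zero Fredholm property and Open Mapping Theorem to upgrade to an isomorphism, the $C^1$ Inverse Function Theorem on a convex neighborhood, and the identity $\alpha-\Pi\alpha=(I-\Pi)\sM(\Psi(\alpha))$ combined with a mean value bound on $\Psi$ to obtain \eqref{eq:aux}. The only cosmetic differences are that you isolate the transversality claim $\ran\sM'(x_\infty)\cap K=\{0\}$ as an intermediate step and write the final algebraic identity a bit more compactly than the paper does.
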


\begin{proof}
Let $\zeta$ denote the embedding $\tilde\sX \subset \sX^*$ and observe that
  \[
    \sE' = \zeta\circ\sM \quad\text{and}\quad \sE'' = \zeta\circ \sM'.
  \]
  The derivative of $\Phi$ at $x_\infty$ is given by
  \[
    D\Phi(x_\infty) = \sM'(x_\infty) + \Pi: \sX\to \tilde\sX.
  \]
  If $D\Phi(x_\infty)x = 0$ for some $x \in \sX$, then $\sM'(x_\infty)x = -\Pi x \in K\subset \tilde\sX$. If $y\in K\subset \sX$, then
\begin{align*}
\langle y, \zeta\Pi x\rangle_{\sX\times \sX^*} &= - \langle y, \sE''(x_\infty)x \rangle_{\sX\times \sX^*}
\\
&= - \langle x , \sE''(x_\infty)y\rangle_{\sX\times \sX^*} \qquad\text{(by \eqref{eq:symmetry})}
\\
&= 0 \qquad (\text{since } y \in \Ker\sE''(x_\infty)).
\end{align*}
In particular, for $y = \Pi x  \in K\subset \sX$, recalling that $\jmath$ denotes the embedding $\sX\subset\sX^*$ and noting that $\jmath\Pi x = \zeta\Pi x \in \sX^*$, we have
\begin{equation}
\label{eq:inj}
\langle \Pi x, \jmath\Pi x\rangle_{\sX\times \sX^*} = \langle \Pi x, \zeta\Pi x\rangle_{\sX\times \sX^*} =0.
\end{equation}
Therefore, $\sE''(x_\infty)x = - \zeta \Pi x = 0$, by our hypothesis that the embedding $\jmath:\sX \to \sX^*$ is definite. Thus, $x \in \Ker \sE''(x_\infty) = K$ and because $\Pi x = 0$, we have $x=0$, that is, $D\Phi(x_\infty)$ has trivial kernel.

Because $\sM'(x_\infty)$ is Fredholm by hypothesis and $\Pi:\tilde\sX\to\tilde\sX$ has finite rank, it follows that
\[
D\Phi(x_\infty) = \sM'(x_\infty) + \Pi:\sX\to \tilde\sX
\]
is Fredholm, where $\Pi:\sX\to \tilde\sX$ denotes the composition of the embedding $\sX \subset \tilde\sX$ and the finite-rank projection $\Pi:\tilde\sX\to \tilde\sX$. Now $D\Phi(x_\infty) :\sX\to \tilde\sX$ is an injective Fredholm operator with index zero and therefore is surjective too. By the Open Mapping Theorem, $D\Phi(x_\infty)$ has a bounded inverse. By the Inverse Function Theorem for $\Phi$ near $x_\infty$, there exist an open neighborhood $U_1 \subset U$ of $x_\infty$ and a convex open neighborhood $V_1\subset \tilde\sX$ of the origin in $\tilde\sX$ so that the $C^1$ inverse $\Psi : V_1 \to U_1$ of $\Phi$ is well-defined. Since $\Pi: \tilde\sX \to \tilde\sX$ is bounded, we may choose $V_0 \subset V_1$, a smaller open neighborhood of the origin in $\tilde\sX$, with $\Pi(V_0) \subset V_1$ and set $U_0 := \Psi(V_0)$. From \eqref{eq:Definition_Phi_map}, we have
\[
\Phi(x) = \sM(x) + \Pi(x-x_\infty), \quad \forall\, x \in U_0,
\]
and the inverse function property and writing $\alpha = \Phi(x) \in V_0$ and $x = \Psi(\alpha)$ for $x \in U_0$, we obtain
\begin{equation}
\label{eq:IFT}
\alpha = \sM(\Psi(\alpha)) + \Pi(\Psi(\alpha) -x_\infty)),\quad \forall\, \alpha\in V_0.
\end{equation}
The Fundamental Theorem of Calculus then yields
\begin{align*}
\Psi(\Pi \alpha)-\Psi(\alpha)
&=
\int_{0}^{1} \left(\frac{d}{dt}\Psi(\alpha + t(\Pi \alpha-\alpha))\right)\,dt
\\
&= \left(\int_0^1 D\Psi(\alpha + t(\Pi \alpha-\alpha))\, dt\right)\, (\Pi \alpha -\alpha), \quad\forall\, \alpha \in V_0,
\end{align*}
where we use the fact that for $\alpha\in V_0$, we have $\alpha,\Pi \alpha \in V_1$ and, by convexity of $V_1$, the map $\Psi$ is well defined on the line segment joining $\alpha$ to $\Pi \alpha$. Therefore,
\[
\|\Psi(\Pi \alpha)-\Psi(\alpha)\|_{\sX}  \leq  M \|\Pi \alpha-\alpha\|_{\tilde\sX},  \quad\forall\, \alpha \in V_0,
\]
where, since $D\Psi(\alpha_1) \in \sL(\tilde\sX,\sX)$ is a continuous function of $\alpha_1 \in V_1$ (because $\Psi:V_1 \to U_1$ is $C^1$ by construction), we have
\[
M := \sup_{\alpha_1\in V_1} \|D\Psi(\alpha_1)\|_{\sL(\tilde\sX,\sX)} < \infty,
\]
since we may assume without loss of generality that $V_1 \supset V_0$ is a sufficiently small and bounded (convex) open neighborhood of the origin. Also, for all $\alpha \in V_0$,
\begin{align*}
\Pi \alpha-\alpha &= \Pi \alpha - \sM(\Psi(\alpha)) -  \Pi(\Psi(\alpha) -x_\infty)) \qquad\text{(by  \eqref{eq:IFT})}
\\
&= \Pi(\alpha - \Pi(\Psi(\alpha) -x_\infty)) - \sM(\Psi(\alpha)) \qquad\text{(since $\Pi^2 =\Pi$),}
\end{align*}
and
\begin{align*}
\|\Pi(\alpha- \Pi(\Psi(\alpha) - x_\infty))\|_{\tilde\sX} &\leq C_1\|\alpha- \Pi(\Psi(\alpha) - x_\infty)\|_{\tilde\sX}
\\
&= C_1\|\sM(\Psi(\alpha))\|_{\tilde\sX} \qquad \text{(by \eqref{eq:IFT}).}
\end{align*}
From the preceding expression for $\Pi \alpha-\alpha$ and preceding estimate, we conclude that
\[
\|\Pi \alpha-\alpha\|_{\tilde\sX} \leq (C_1+1)\|\sM(\Psi(\alpha))\|_{\tilde\sX}, \quad \forall\, \alpha\in V_0.
\]
Therefore, by combining the preceding inequalities, we obtain
\[
\|\Psi(\Pi \alpha)-\Psi(\alpha)\|_{\sX} \leq M(C_1+1)\|\sM(\Psi(\alpha))\|_{\tilde\sX}, \quad \forall\, \alpha\in V_0,
\]
and this concludes the proof of the assertions of Lemma \ref{lem:laux}.
\end{proof}

Recall the Definition~\ref{defn:Morse-Bott_function} of a Morse--Bott function $\sE$ and its set $\Crit \sE$ of critical values.

\begin{defn}[Lyapunov--Schmidt reduction of a $C^2$ function with Hessian operator that is Fredholm with index zero at a point]
\label{defn:LSreduction}
Assume the hypotheses of Theorem \ref{mainthm:Lojasiewicz-Simon_gradient_inequality} and let $\Psi: V_0 \cong U_0$ be the $C^1$ diffeomorphism of open neighborhoods, $V_0 \subset \tilde\sX$ of the origin and $U_0 \subset \sX$ of $x_\infty$, provided by Lemma~\ref{lem:laux}. We define the \emph{Lyapunov--Schmidt reduction of $\sE:U_0\to\RR$ at $x_\infty$} by
\[
\Gamma : K\cap V_0 \to \RR,\quad \alpha \mapsto \sE(\Psi(\alpha)),
\]
where $K = \Ker(\sE''(x_\infty):\sX\to\sX^*)$.
\end{defn}

Note that the origin in $\tilde\sX$ is a critical point of $\Gamma$ since $\Psi(0) = x_\infty$, the critical point of $\sE:\sU \to \RR$ in Lemma~\ref{lem:laux}, and
\[
\Gamma'(0)x = \sE'(\Psi(0))D\Psi(0)x = \sE'(x_\infty)D\Psi(0)x = 0, \quad\forall\, x \in \sX.
\]
The following lemma plays a crucial role in the proofs of Theorems \ref{mainthm:Lojasiewicz-Simon_gradient_inequality} and \ref{mainthm:Optimal_Lojasiewicz-Simon_gradient_inequality_Morse-Bott_energy_functional}.

\begin{lem}[Properties of the Lyapunov--Schmidt reduction of a $C^2$ function]
\label{lem:Lyapunov}
Assume the hypotheses of Theorem \ref{mainthm:Lojasiewicz-Simon_gradient_inequality} together with the notation of Lemma~\ref{lem:laux} and Definition \ref{defn:LSreduction}.
\begin{enumerate}
\item \label{eq:Lj1} If $\sE$ is Morse--Bott at $x_\infty$, then there is an open neighborhood $\sV$ of the origin in $K\cap V_0$ where the Lyapunov--Schmidt reduction of $\sE$ is a constant function, that is, $\Gamma \equiv \sE(x_\infty)$ on $\sV$.

\item \label{eq:Lj2} If $\sM$ is real analytic on $\sU$, then $\Gamma$ is real analytic on $K\cap V_0$.
\end{enumerate}
\end{lem}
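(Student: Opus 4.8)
The plan is to prove the two assertions separately: Item~\eqref{eq:Lj2} follows by writing $\Gamma$ as a composition of real analytic maps, while Item~\eqref{eq:Lj1} follows by showing that $\Psi$ carries a neighborhood of the origin in $K$ into the critical submanifold $\sC$, on which $\sE$ is forced to be constant.

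For Item~\eqref{eq:Lj2}, I would first observe that the map $\Phi$ of \eqref{eq:Definition_Phi_map} is the sum of the real analytic map $\sM$ and the affine map $x\mapsto\Pi(x-x_\infty)$, hence is real analytic on $\sU$. Lemma~\ref{lem:laux} shows that $D\Phi(x_\infty)\in\sL(\sX,\tilde\sX)$ is a Banach space isomorphism, so the analytic Inverse Function Theorem recalled in Section~\ref{subsubsec:Smooth_and_analytic_inverse_and_implicit function theorems} provides a real analytic local inverse; by uniqueness of inverses this agrees with the $C^1$ inverse $\Psi:V_0\to U_0$ of Lemma~\ref{lem:laux} (shrinking $V_0$ if necessary so that analyticity holds throughout). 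Since $\sM$ is an analytic gradient map, its potential $\sE$ is real analytic by Item~\eqref{item:Huang_2-1-2_analytic_gradient_map_implies_analytic_potential} of Proposition~\ref{prop:Huang_2-1-2}. As $K\cap V_0$ is an open subset of the finite-dimensional space $K$ and the inclusion $K\cap V_0\hookrightarrow V_0$ is the restriction of the bounded linear embedding $K\subset\tilde\sX$, the reduction $\Gamma=\sE\circ\Psi$ restricted to $K\cap V_0$ is a composition of real analytic maps and therefore real analytic.

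For Item~\eqref{eq:Lj1}, I would first note that, since $\sE''=\zeta\circ\sM'$ with $\zeta:\tilde\sX\subset\sX^*$ injective, one has $K=\Ker\sE''(x_\infty)=\Ker\sM'(x_\infty)$. Invoking Definition~\ref{defn:Morse-Bott_function} and shrinking $\sU$, we may take $\sC:=\sU\cap\Crit\sE$ to be a $C^1$ submanifold with $T_{x_\infty}\sC=K$ by \eqref{eq:Nondegenerate_critical_submanifold}, so in particular $\dim\sC=\dim K<\infty$. On $\sC$ we have $\sM\equiv 0$, whence $\Phi|_\sC(x)=\Pi(x-x_\infty)\in K$, and the tangent map at $x_\infty$ is $D\Phi(x_\infty)|_K=(\sM'(x_\infty)+\Pi)|_K=\id_K$, because $\sM'(x_\infty)$ annihilates $K$ while $\Pi$ fixes it. Hence $\Phi|_\sC:\sC\to K$ is a local $C^1$ diffeomorphism near $x_\infty$ by the finite-dimensional Inverse Function Theorem, so there is a ball $\sV$ about $0$ in $K\cap V_0$ such that each $\alpha\in\sV$ has a preimage $x\in\sC$ near $x_\infty$ with $\Phi(x)=\alpha$; since $\Phi:U_0\to V_0$ is bijective with inverse $\Psi$ and $x\in U_0$ for $\sV$ small, necessarily $x=\Psi(\alpha)\in\sC\subset\Crit\sE$. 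Finally, for $\alpha\in\sV$ the path $t\mapsto\Psi(t\alpha)$ lies in $\Crit\sE$, so $\frac{d}{dt}\sE(\Psi(t\alpha))=\sE'(\Psi(t\alpha))\bigl(D\Psi(t\alpha)\alpha\bigr)=0$, and integrating from $t=0$ to $t=1$ yields $\Gamma(\alpha)=\sE(\Psi(\alpha))=\sE(\Psi(0))=\sE(x_\infty)$, using $\Psi(0)=x_\infty$.

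The main obstacle is the dimension bookkeeping in Item~\eqref{eq:Lj1}: one must check that $\Phi$ restricts to a map from the finite-dimensional critical manifold $\sC$ onto a full neighborhood of $0$ in $K$ --- which rests precisely on the identities $\Pi|_K=\id_K$ and $\sM'(x_\infty)|_K=0$ --- and then reconcile the local inverse of $\Phi|_\sC$ with the global inverse $\Psi$ by shrinking the neighborhoods so that the constructed preimage genuinely lands in $U_0$. Once that is arranged, the constancy of $\sE$ on $\sC$ is immediate, and Item~\eqref{eq:Lj2} is routine given the analytic Inverse Function Theorem and Proposition~\ref{prop:Huang_2-1-2}.
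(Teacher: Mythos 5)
Your proposal is correct and takes essentially the same route as the paper's own proof: Item~\eqref{eq:Lj2} via real analyticity of $\Phi$, the Analytic Inverse Function Theorem applied to $\Psi$, and Proposition~\ref{prop:Huang_2-1-2}~\eqref{item:Huang_2-1-2_analytic_gradient_map_implies_analytic_potential}; and Item~\eqref{eq:Lj1} by restricting $\Phi$ to $\Crit\sE\cap U_0$, noting $D\Phi(x_\infty)|_K=\Pi|_K=\id_K$, and invoking the finite-dimensional Inverse Function Theorem to conclude $\Psi(\sV)\subset\Crit\sE$. The only cosmetic difference is that you integrate $\sE'$ along radial paths to get constancy of $\Gamma$ on $\sV$, whereas the paper simply observes $\Gamma'(\alpha)=\sE'(\Psi(\alpha))D\Psi(\alpha)=0$ on $\sV$.
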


\begin{rmk}[Relationship between the Morse--Bott and other integrability conditions]
Item \eqref{eq:Lj1} in Lemma \ref{lem:Lyapunov} is closely related to \cite[Lemma 1]{Adams_Simon_1988} due to Adams and Simon, which asserts (in our notation) that $\Gamma \equiv \Gamma(0)$ on an open neighborhood of the origin in $K$ if and only if the following integrability condition holds:
\begin{multline}
\label{eq:Star}
\tag{$\star$}  
\forall\, v \in K, \ \exists\, u \in C^0((0,1);\tilde{\sX}) \text{ such that } O(u) \subset \Crit\sE
\\
\text{and } \lim_{t\downarrow 0} u(t) = 0 \text{ (in $\tilde{\sX}$) } \text{ and } \lim_{t\downarrow 0} u(t)/t = v \text{ (in $\tilde{\sG}$)},  
\end{multline}
where $O(u) := \{u(t): t \in (0,1)\}$ and $\tilde\sG$ is a Banach space with continuous embeddings $\tilde{\sX} \subset \tilde{\sG} \subset \sX^*$, as in the hypotheses of Theorem \ref{mainthm:Lojasiewicz-Simon_gradient_inequality2}. (Adams and Simon choose $\tilde{\sG}$ to be a certain Hilbert space but do not otherwise precisely specify the regularity properties of the path $u$ in their definition.) See Feehan \cite[Appendix C]{Feehan_lojasiewicz_inequality_all_dimensions_morse-bott} for further discussion. 
\end{rmk}

\begin{proof}[Proof of Lemma \ref{lem:Lyapunov}]
If $\sE$ is Morse--Bott at $x_\infty$ then, by shrinking $U_0$ if necessary, we may assume that the set $\Crit \sE\cap U_0$ is a submanifold of $U_0$ with tangent space
\[
T_{x_\infty} \Crit \sE = K = \Ker\left(\sM'(x_\infty) :\sX\to \tilde\sX\right).
\]
Then the restriction of the map $\Phi:U_0\to V_0$ in \eqref{eq:Definition_Phi_map},
\begin{equation}
\label{eq:Phi_restricted_CritE}
\Phi: \Crit\sE\cap U_0\to K\cap V_0,
\end{equation}
has derivative at $x_\infty$ given by
\[
D\Phi(x_\infty) = \sM'(x_\infty) + \Pi = \Pi : K \to K.
\]
The preceding operator comprises the embedding $\eps:\sX\to \tilde\sX$
restricted to $K$ and resulting isomorphism from $K\subset \sX$ to $K\subset \tilde\sX$. An application of the Inverse Function Theorem shows that the inverse of the map \eqref{eq:Phi_restricted_CritE} is defined in a neighborhood $\sV$ of the origin in $K\cap V_0$ and is the restriction of the map $\Psi:V_0\to U_0$ to $K\cap V_0$. Therefore, $\Psi(\sV)\subset \Crit \sE \cap U_0$ and we compute
\[
\Gamma'(\alpha) = \sE'(\Psi(\alpha)) D\Psi(\alpha) = 0,\quad \forall\, \alpha\in \sV.
\]
Therefore, $\Gamma(\alpha) = \Gamma(0) = \sE(x_\infty)$, for every $\alpha\in \sV$. This proves Item \eqref{eq:Lj1}.

To prove Item \eqref{eq:Lj2}, we recall from Lemma~\ref{lem:laux} that
the map $\Psi: V_0 \to U_0$ is a $C^1$ diffeomorphism. Moreover,
$\Phi$ is real analytic since $\sM$ is real analytic by hypothesis and
by the definition \eqref{eq:Definition_Phi_map} of $\Phi$.  The
Analytic Inverse Function Theorem (see Section
\ref{subsubsec:Smooth_and_analytic_inverse_and_implicit_function_theorems}) implies that the inverse map $\Psi: V_0 \to U_0$ is also real analytic and therefore its restriction to the intersection $K\cap V_0$ of a finite-dimensional linear subspace $K \subset\tilde\sX$ with the open set $V_0 \subset \tilde\sX$ is still real analytic.
Because the gradient map $\sM:\sU\to\tilde\sX$ is real analytic, it follows that its
potential function $\sE : \sU\to \RR$ is real analytic
by Proposition \ref{prop:Huang_2-1-2}
\eqref{item:Huang_2-1-2_analytic_gradient_map_implies_analytic_potential}.
Therefore, the composition $\Gamma = \sE\circ\Psi:K\cap V_0 \to \RR$ is a real analytic function.
\end{proof}

We then have the

\begin{prop}[{\L}ojasiewicz--Simon gradient inequalities for analytic and Morse--Bott functions on Banach spaces]
\label{prop:LSprop}
Assume the hypotheses of Lemma~\ref{lem:laux}. Then the following hold.
\begin{enumerate}
\item
\label{item:LSProp_Morse-Bott}
If $\sE$ is Morse--Bott at $x_\infty$, then there exist an open neighborhood $W_0 \subset \sU$ of $x_\infty$ and a constant $C = C(\sE,W_0) \in [1, \infty)$ such that
\[
|\sE(x) - \sE(x_\infty)| \leq C\|\sM(x)\|_{\tilde\sX}^2, \quad \forall\, x\in W_0.
\]
\item
\label{item:LSProp_analytic}
If $\sM$ is analytic on $\sU$, then there exist an open neighborhood $W_0 \subset \sU$ of $x_\infty$ and constants $C = C(\sE,W_0) \in [1, \infty)$ and $\beta\in (1,2]$ such that
\[
|\sE(x) - \sE(x_\infty)| \leq C\|\sM(x)\|_{\tilde\sX}^\beta, \quad \forall\, x\in W_0.
\]
\end{enumerate}
\end{prop}

\begin{proof}
Denote $x=\Psi(\alpha) \in U_0$ for $\alpha \in V_0$ and recall the definitions of the open neighborhoods $U_1$ and $V_1$ from the proof of Lemma~\ref{lem:laux}. By shrinking $U_1$ if necessary, we may assume that $U_1$ is contained in a bounded, convex, open subset $U_2 \subset \sU$. For $\alpha\in V_0$ we have $\alpha, \Pi \alpha \in V_1$ (as in the proof of Lemma~\ref{lem:laux}) and therefore $\Psi(\alpha), \Psi(\Pi \alpha) \in U_0$ and the line segment joining $\Psi(\alpha)$ to $\Psi(\Pi \alpha)$ lies in $U_2$. The Definition \ref{defn:LSreduction} of $\Gamma$, the fact that
\[
\Pi \alpha \in K\cap V_0, \quad\forall\, \alpha \in V_0
\]
and the Fundamental Theorem of Calculus then give
\begin{align*}
\sE(\Psi(\alpha)) - \Gamma(\Pi \alpha) &= \sE(\Psi(\alpha)) - \sE(\Psi(\Pi \alpha))
\\
&= -\int_{0}^{1} \frac{d}{dt}\sE\left(\Psi(\alpha) + t(\Psi(\Pi \alpha)-\Psi(\alpha))\right)\,dt, \quad\forall\, \alpha \in V_0,
\end{align*}
and thus
\begin{multline}
\label{eq:ftc}
\sE(\Psi(\alpha)) - \Gamma(\Pi \alpha)
\\
= \left(-\int_0^1 \sE'(\Psi(\alpha) + t(\Psi(\Pi \alpha)-\Psi(\alpha)))\, dt\right)\,  (\Psi(\Pi \alpha)-\Psi(\alpha)),
\quad\forall\, \alpha \in V_0.
\end{multline}
Note that
\begin{multline*}
\|\sE'(\Psi(\alpha) + t(\Psi(\Pi \alpha)- \Psi(\alpha)))\|_{\sX^*}
\\
\leq \|\sE'(\Psi(\alpha) + t (\Psi(\Pi \alpha)-\Psi(\alpha))) - \sE'(\Psi(\alpha))\|_{\sX^*}
+ \|\sE'(\Psi(\alpha))\|_{\sX^*}, \quad\forall\, \alpha \in V_0,
\end{multline*}
and therefore,
\begin{multline}
\label{eq:Eprime_Psi_triangle}
\|\sE'(\Psi(\alpha) + t(\Psi(\Pi \alpha)- \Psi(\alpha)))\|_{\sX^*}
\\
\leq C_0\|\sM(\Psi(\alpha) + t (\Psi(\Pi \alpha)-\Psi(\alpha))) - \sM(\Psi(\alpha))\|_{\tilde\sX}
+ C_0\|\sM(\Psi(\alpha))\|_{\tilde\sX},  \quad\forall\, \alpha \in V_0,
\end{multline}
where $C_0 \in [1,\infty)$ is the norm of the embedding $\zeta: \tilde\sX\hookrightarrow \sX^*$. Similarly, the Fundamental Theorem of Calculus yields
\begin{align*}
{}&\sM(\Psi(\alpha) + t (\Psi(\Pi \alpha)-\Psi(\alpha))) - \sM(\Psi(\alpha))
\\
&= \int_{0}^{1} \frac{d}{ds}\sM\left(\Psi(\alpha) + st (\Psi(\Pi \alpha)-\Psi(\alpha))\right)\,ds
\\
&= t\left(\int_0^1 \sM' (\Psi(\alpha) + st (\Psi(\Pi \alpha)-\Psi(\alpha)))\, ds\right)(\Psi(\Pi \alpha)-\Psi(\alpha)), \quad\forall\, \alpha \in V_0.
\end{align*}
Thus, by taking norms of the preceding equality we obtain
\begin{equation}
\label{eq:xua}
\|\sM(\Psi(\alpha) + t (\Psi(\Pi \alpha )-\Psi(\alpha))) - \sM(\Psi(\alpha))\|_{\tilde\sX} \leq M_1 \|\Psi(\Pi \alpha)-\Psi(\alpha)\|_\sX, \quad\forall\, \alpha \in V_0,
\end{equation}
where, since $\sM:\sU\to \tilde\sX$ is $C^1$ by hypothesis, we have
\[
M_1 := \sup_{x\in U_2} \|\sM'(x)\|_{\sL(\sX,\tilde\sX)} < \infty,
\]
because we may assume (by further shrinking $U_1$ if necessary) that $U_2\subset U$ is a sufficiently small and bounded, convex, open neighborhood of $x_\infty$.

Combining the inequalities \eqref{eq:Eprime_Psi_triangle} and \eqref{eq:xua} with the equality \eqref{eq:ftc} yields
\[
|\sE(\Psi(\alpha)) - \Gamma(\Pi \alpha)|
\leq
C_0\left(M_1\|\Psi(\Pi \alpha)-\Psi(\alpha)\|_\sX +  \|\sM(\Psi(\alpha))\|_{\tilde\sX} \right)
\|\Psi(\Pi \alpha)-\Psi(\alpha)\|_\sX,
\]
and so combining the preceding inequality with \eqref{eq:aux} gives
\begin{equation}
\label{eq:EPsi_minus_GammaPi_bounded_by_gradient_E_Psi_squared}
|\sE(\Psi(\alpha)) - \Gamma(\Pi \alpha)| \leq C\|\sM(\Psi(\alpha))\|^2_{\tilde\sX}, \quad\forall\, \alpha \in V_0.
\end{equation}
We now invoke the hypotheses that $\sE$ is Morse--Bott at $x_\infty$ or analytic near $x_\infty$.

When $\sE$ is Morse--Bott at $x_\infty$, Lemma~\ref{lem:Lyapunov}~\eqref{eq:Lj1} provides an open neighborhood $\sV$ of the origin in
$K \cap V_0$ such that $\Gamma \equiv \sE(x_\infty)$ on $\sV$. By choosing $W_0 =  \Psi(V_0\cap\Pi^{-1}(\sV))$ and noting that $\Pi:\tilde\sX\to\tilde\sX$ is a continuous (linear) map, we obtain from \eqref{eq:EPsi_minus_GammaPi_bounded_by_gradient_E_Psi_squared} that
\[
|\sE(x) - \sE(x_\infty)| \leq C  \|\sM(x)\|_{\tilde\sX}^2, \quad\forall\, x = \Psi(\alpha)\in W_0,
\]
which proves Item \eqref{item:LSProp_Morse-Bott}.

Finally, when $\sE$ is analytic on $\sU$ then Lemma~\ref{lem:Lyapunov}~\eqref{eq:Lj2} implies that $\Gamma$ is analytic on $K\cap V_0$. The finite-dimensional {\L}ojasiewicz gradient inequality \eqref{eq:Lojasiewicz_1984_star} in Theorem \ref{thm:Huang_2-3-1} \eqref{item:Huang_theorem_2-3-1_i} applies to give, for a possibly smaller neighborhood $V_2\subset V_0$ of the origin, constants $C \in [1,\infty)$ and $\beta\in(1,2]$ such that
\begin{equation}
\label{eq:finiteLS}
|\Gamma(\Pi \alpha) - \sE(x_\infty)| \leq C \|\Gamma'(\alpha)\|_{K^*}^\beta, \quad\forall\, \alpha \in V_2.
\end{equation}
But $\Gamma'(\Pi \alpha) = \sE'(\Psi(\Pi \alpha)) D\Psi(\Pi \alpha)$ by Definition~\ref{defn:LSreduction} of $\Gamma$ and thus
\begin{equation}
\label{eq:Gradient_Gamma_bounded_by_gradient_E}
\|\Gamma'(\Pi \alpha)\|_{K^*}
\leq
M_2\|\sE'(\Psi(\Pi \alpha))\|_{\sX^*}
\leq
C_0M_2\|\sM(\Psi(\Pi \alpha))\|_{\tilde\sX}, \quad\forall\, \alpha \in V_2.
\end{equation}
Here, since $D\Psi(\alpha_1) \in \sL(\tilde\sX,\sX)$ is a continuous function of $\alpha_1 \in V_1$ (because $\Psi:V_1 \to U_1$ is $C^1$ by construction), we have
\[
M_2 := \sup_{\alpha_1\in V_1} \|D\Psi(\alpha_1)\|_{\sL(\tilde\sX,\sX)} < \infty.
\]
The constant $M_2$ is finite because we may assume without loss of generality that $V_1 \supset V_2$ is a sufficiently small and bounded, convex, open neighborhood of the origin. Hence, for every $\alpha\in V_2$,
\begin{align*}
|\Gamma(\Pi \alpha) - \sE(x_\infty)|
&\leq
C\|\sM(\Psi(\Pi \alpha))\|_{\tilde\sX}^\beta \qquad \text{(by \eqref{eq:finiteLS} and \eqref{eq:Gradient_Gamma_bounded_by_gradient_E})}
\\
&\leq C\left(\|\sM(\Psi(\Pi \alpha)) - \sM(\Psi(\alpha))\|_{\tilde\sX} + \|\sM(\Psi(\alpha))\|_{\tilde\sX} \right)^\beta
\\
&\leq
C\left(\|\Psi(\Pi \alpha)-\Psi(\alpha)\|_\sX + \|\sM(\Psi(\alpha))\|_{\tilde\sX}\right)^\beta \qquad\text{(by \eqref{eq:xua} for $t=1$).}
\end{align*}
By combining the preceding inequality with \eqref{eq:aux}, we obtain
\begin{equation}
\label{eq:Gamma_minus_E_bounded_by_gradient_E_Psi_alpha}
|\Gamma(\Pi \alpha) - \sE(x_\infty)| \leq C\|\sM(\Psi(\alpha))\|_{\tilde\sX}^\beta, \quad\forall\, \alpha\in V_2.
\end{equation}
Consequently, for every $\alpha\in V_2$,
\begin{align*}
|\sE(\Psi(\alpha)) - \sE(x_\infty)|
&\leq
|\sE(\Psi(\alpha)) - \Gamma(\Pi \alpha)| + | \Gamma(\Pi \alpha) - \sE(x_\infty)|
\\
&\leq C\left( \|\sM(\Psi(\alpha))\|_{\tilde\sX}^2 + \|\sM(\Psi(\alpha))\|_{\tilde\sX}^\beta \right)
\qquad\text{(by \eqref{eq:EPsi_minus_GammaPi_bounded_by_gradient_E_Psi_squared} and
\eqref{eq:Gamma_minus_E_bounded_by_gradient_E_Psi_alpha})}
\\
&\leq C \|\sM(\Psi(\alpha))\|_{\tilde\sX}^\beta \left( 1+ \|\sM(\Psi(\alpha))\|_{\tilde\sX}^{2-\beta} \right)
\\
&\leq CM_3 \|\sM(\Psi(\alpha))\|_{\tilde\sX}^\beta.
\end{align*}
Here, for small enough $V_2$ and noting that $\sM(\Psi(\alpha)) \in \tilde\sX$ is a continuous function of $\alpha \in V_2$ (since $\Psi:V_1\to U_1$ is $C^1$ by construction), we have
\[
M_3 := 1+ \sup_{\alpha \in V_2} \|\sM(\Psi(\alpha))\|_{\tilde\sX}^{2-\beta} < \infty.
\]
Setting $x = \Psi(\alpha)$ for $\alpha \in V_2$ yields
\[
|\sE(x) - \sE(x_\infty)| \leq CM_3 \|\sM(x)\|_{\tilde\sX}^\beta, \quad\forall\, x \in \Psi(V_2).
\]
We now choose $W_0 = \Psi(V_2)$ to complete the proof of Item \eqref{item:LSProp_analytic} and hence Proposition~\ref{prop:LSprop}.
\end{proof}

We can now complete the

\begin{proof}[Proofs of
  Theorems~\ref{mainthm:Lojasiewicz-Simon_gradient_inequality} and
  \ref{mainthm:Optimal_Lojasiewicz-Simon_gradient_inequality_Morse-Bott_energy_functional}]
  The conclusions follow immediately from Proposition~\ref{prop:LSprop}.
\end{proof}

\subsection{Generalized {\L}ojasiewicz--Simon gradient inequalities for analytic functions or Morse--Bott functions on Banach spaces and gradient maps valued in Hilbert spaces}
\label{subsec:Lojasiewicz-Simon_gradient_inequalities_Hilbert_space}
In this section, we complete the proofs of Theorems \ref{mainthm:Lojasiewicz-Simon_gradient_inequality2} and
  \ref{mainthm:Optimal_Lojasiewicz-Simon_gradient_inequality_Morse-Bott_energy_functional}.
Let $\sX, \tilde\sX, \sG, \tilde\sG$, and $\sX^*$ denote Banach spaces as in the statement of Theorem \ref{mainthm:Lojasiewicz-Simon_gradient_inequality2} and let $K \subset \sX$ denote a finite-dimensional subspace. We shall identify $K$ with its images in $\sX, \sG, \tilde\sX, \tilde\sG$, and $\sX^*$. By \cite[Definition 4.20 and Lemma 4.21 (a)]{Rudin}, the subspace $K$ has a closed complement $\sY \subset \sX^*$ and there exists a continuous projection operator,
\begin{equation}
\label{eq:Projection_Xdual_onto_K2}
\Pi : \sX^* \to K \subset \sX^*.
\end{equation}
The splitting $\sX^* = \sY \oplus K$ as a Banach space into closed subspaces induces corresponding splittings,
\[
  \sX=\sX_0\oplus K, \quad \sG=\sG_0\oplus K, \quad \tilde\sX= \tilde\sX_0\oplus K, \quad\text{and}\quad \tilde\sG = \tilde\sG_0\oplus K,
\]
where $\tilde\sG_0 := \sY\cap \tilde\sG$ and similarly for the remaining closed complements. By restriction, the projection $\Pi:\sX^*\to \sX^*$ induces continuous projection operators with range $K$ on $\sX, \tilde\sX, \sG$, and $\tilde\sX$ that we continue to denote by $\Pi$.

Because the compositions of embeddings,
\[
K\subset \sX\subset \sG\subset \tilde\sG \quad \text{and}\quad K\subset \sX\subset \tilde\sX\subset \tilde\sG,
\]
are equal by hypothesis, it follows that the projection $\Pi: \tilde\sG \to\tilde\sG$ restricts to bounded linear operators $\Pi: \sX\to \tilde\sX$ and $\Pi: \sG \to K\subset \tilde\sG$.

Recall from the proof of Lemma~\ref{lem:laux} that there exist an open neighborhood $U_1 \subset \sU$ of $x_\infty$ and a convex open neighborhood $V_1\subset \tilde\sX$ of the origin in $\tilde\sX$ such that the restriction of the map $\Phi:\sX \supset \sU \to \tilde\sX$, first introduced in \eqref{eq:Definition_Phi_map} (and recalled in the forthcoming Lemma \ref{lem:laux2}), to $\Phi:U_1 \to V_1$ has a $C^1$ inverse $\Psi: V_1 \to U_1$. We then have the

\begin{lem}[Continuous extension]
\label{lem:inv_ext}
Assume the hypotheses of Theorem~\ref{mainthm:Lojasiewicz-Simon_gradient_inequality2} and define a map $\sT:\sU \to \sL(\sG, \tilde\sG)$ by
\begin{equation}
\label{eq:Definition_sT_map}
\sT : \sX \supset \sU \ni x\mapsto \sM_1(x) +  \Pi \in \sL(\sG, \tilde\sG).
\end{equation}
Then the following hold.
\begin{enumerate}
\item \label{item:lemma_inv_ext_DPhi}
For every $x \in \sU$, the bounded linear operator $\sT(x) : \sG \to \tilde\sG$ is a continuous extension of $D\Phi(x): \sX \to \tilde\sX$.

\item \label{item:lemma_inv_ext_DPsi}
The neighborhoods $U_1 \subset \sU$ of $x_\infty$ and $V_1 = \Phi(U_1) \subset \tilde\sX$ of the origin can be chosen such that for every $x\in U_1$, the inverse operator $\sT(x)^{-1} : \tilde\sG \to \sG$ is well-defined and is a bounded extension of $D\Psi(\Phi(x))$.

\item \label{item:lemma_inv_ext_map_continuity}
The map $U_1 \ni x \mapsto \sT(x)^{-1} \in \sL(\tilde\sG,\sG)$ is continuous.
\end{enumerate}
\end{lem}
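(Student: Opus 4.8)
The plan is to treat the three assertions in order; Items \eqref{item:lemma_inv_ext_DPhi} and \eqref{item:lemma_inv_ext_map_continuity} are essentially formal, while the invertibility in Item \eqref{item:lemma_inv_ext_DPsi} carries the real content. For Item \eqref{item:lemma_inv_ext_DPhi}, I would differentiate \eqref{eq:Definition_Phi_map} to get $D\Phi(x) = \sM'(x) + \Pi$ as an operator $\sX \to \tilde\sX$, and compare with $\sT(x) = \sM_1(x) + \Pi$ from \eqref{eq:Definition_sT_map}. Since $\sM_1(x)$ extends $\sM'(x)$ by hypothesis and the projection $\Pi$ on $\sG$ restricts to the projection $\Pi$ on $\sX$ with range $K \subset \tilde\sX \subset \tilde\sG$, the restriction of $\sT(x)$ to $\sX \subset \sG$ reproduces $D\Phi(x)$ with values in $\tilde\sX \subset \tilde\sG$; continuity of $x \mapsto \sT(x)$ in $\sL(\sG,\tilde\sG)$ is immediate from the assumed continuity of $x \mapsto \sM_1(x)$ and the constancy of $\Pi$.

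The heart of the proof, and what I expect to be the main obstacle, is showing that $\sT(x_\infty):\sG \to \tilde\sG$ is invertible. Since $\sM_1(x_\infty)$ is Fredholm of index zero and $\Pi$ is finite rank, $\sT(x_\infty)$ is Fredholm of index zero, so it suffices to prove surjectivity (equivalently injectivity). It is worth stressing that this does \emph{not} follow formally from the extension property, the invertibility of $D\Phi(x_\infty)$ furnished by Lemma~\ref{lem:laux}, and the index-zero hypotheses alone: a two-dimensional example shows that an index-zero extension of an invertible operator can be singular, so the symmetric/definite structure must be used. The cleanest argument I would give is for surjectivity: by Lemma~\ref{lem:laux} the operator $D\Phi(x_\infty):\sX \to \tilde\sX$ is onto, so $\sT(x_\infty)(\sX) = D\Phi(x_\infty)(\sX) = \tilde\sX$ and hence $\Ran\sT(x_\infty) \supseteq \tilde\sX$; because $\Ran\sT(x_\infty)$ is closed (Fredholm) and $\tilde\sX$ is dense in $\tilde\sG$ in the settings of interest (smooth sections are dense in all the Sobolev spaces in play), this forces $\Ran\sT(x_\infty) = \tilde\sG$, and index zero then gives injectivity as well. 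Alternatively, one can argue injectivity directly, paralleling Lemma~\ref{lem:laux}: from $\sT(x_\infty)w = 0$ one gets $\sM_1(x_\infty)w = -\Pi w \in K$, transports the Hessian symmetry $\langle y, \sM'(x_\infty)u\rangle = \langle u, \sM'(x_\infty)y\rangle$ from $\sX$ to $\sG$ using density of $\sX$ in $\sG$ to obtain $\langle y, \sM_1(x_\infty)w\rangle = 0$ for all $y \in K$, and then takes $y = \Pi w$ so that definiteness of $\sX \subset \sX^*$ yields $\Pi w = 0$; the remaining point $\sG_0 \cap \Ker\sM_1(x_\infty) = 0$, equivalently $\Ker\sM_1(x_\infty) = K$, is exactly where density is again needed to rule out spurious kernel of $\sM_1(x_\infty)$ in the larger space. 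Either way, once $\sT(x_\infty)$ is invertible the Open Mapping Theorem gives a bounded inverse.

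To finish Item \eqref{item:lemma_inv_ext_DPsi}, I would use the continuity of $x \mapsto \sT(x)$ together with the openness of the set of invertible operators in $\sL(\sG,\tilde\sG)$ to find a neighborhood of $x_\infty$ on which every $\sT(x)$ is invertible, and I would intersect it with the $U_1$ of Lemma~\ref{lem:laux} (replacing $V_1$ by $\Phi(U_1)$). That $\sT(x)^{-1}$ extends $D\Psi(\Phi(x)) = (D\Phi(x))^{-1}$ is then automatic: for $\eta \in \tilde\sX$, putting $\xi = (D\Phi(x))^{-1}\eta \in \sX$ gives $\sT(x)\xi = D\Phi(x)\xi = \eta$ by Item \eqref{item:lemma_inv_ext_DPhi}, whence $\sT(x)^{-1}\eta = \xi$. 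Finally, Item \eqref{item:lemma_inv_ext_map_continuity} is then purely formal: $x \mapsto \sT(x)^{-1}$ is the composition of the continuous map $x \mapsto \sT(x)$ with the inversion map on invertible operators, which is continuous by the Neumann series, so continuity into $\sL(\tilde\sG,\sG)$ follows.
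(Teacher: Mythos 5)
Your proof is correct in outline and coincides with the paper's on Items \eqref{item:lemma_inv_ext_DPhi} and \eqref{item:lemma_inv_ext_map_continuity} and on the second half of Item \eqref{item:lemma_inv_ext_DPsi} (openness of the invertibles in $\sL(\sG,\tilde\sG)$ plus continuity of $\sT$, then the restriction identity $\sT(x)^{-1}\restriction\tilde\sX = D\Psi(\Phi(x))$). The divergence is exactly where you predicted it: the invertibility of $\sT(x_\infty)$. The paper disposes of this in one sentence, asserting that one "applies the same argument as in the proof of Lemma~\ref{lem:laux}, but with $\sG$ and $\tilde\sG$ in place of $\sX$ and $\tilde\sX$" --- that is, it re-runs the symmetry/definiteness injectivity argument at the $(\sG,\tilde\sG)$ level, which tacitly requires transporting the Hessian symmetry to $\sG$ and knowing that $\Ker\sM_1(x_\infty)$ is no larger than $K$; your second (injectivity) variant makes precisely these implicit debts visible. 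Your primary argument --- $\Ran\sT(x_\infty)\supseteq\sT(x_\infty)(\sX)=\tilde\sX$, the range is closed by Fredholmness, density of $\tilde\sX$ in $\tilde\sG$ forces surjectivity, and index zero then yields injectivity --- is cleaner in that it sidesteps the kernel question entirely, and your observation (with the two-dimensional example) that invertibility of an index-zero extension is \emph{not} formal is a genuine point the paper passes over in silence. The one caveat is that density of $\tilde\sX$ in $\tilde\sG$ (or of $\sX$ in $\sG$ for your alternative) is not among the stated hypotheses of Theorem~\ref{mainthm:Lojasiewicz-Simon_gradient_inequality2}, so strictly speaking you prove the lemma under a mild additional assumption; it is satisfied in every application in the paper (Sobolev spaces over a closed manifold), and the paper's own one-line argument likewise leans on unstated structure, so this is a difference in which auxiliary hypothesis is imported rather than a defect of your approach.
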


\begin{proof}
Consider Item \eqref{item:lemma_inv_ext_DPhi}. By hypothesis, $\sM_1(x) \in \sL(\sG, \tilde\sG)$ is a continuous extension of $\sM'(x) \in \sL(\sX, \tilde\sX)$ for each $x \in \sU$ and thus Item \eqref{item:lemma_inv_ext_DPhi} follows by the definition \eqref{eq:Definition_Phi_map} of $\Phi$, giving $D\Phi(x) = \sM'(x) + \Pi$, and the definition \eqref{eq:Definition_sT_map} of $\sT(x)$.

Consider Item \eqref{item:lemma_inv_ext_DPsi}. By hypothesis, the operator $\sM_1(x_\infty): \sG\to \tilde\sG$ is Fredholm of index zero. Hence, by applying the same argument as in the proof of Lemma~\ref{lem:laux}, but with $\sG$ and $\tilde\sG$ in place of $\sX$ and $\tilde\sX$, we see that the bounded linear operator $\sT(x_\infty) : \sG\to \tilde\sG$ has a bounded inverse $\sT(x_\infty)^{-1}: \tilde\sG\to \sG$. The subset of \emph{invertible} linear operators $\sI(\sG, \tilde\sG)$ is open in $\sL(\sG, \tilde\sG)$. By hypothesis, the map $\sT:\sU \to \sL(\sG, \tilde\sG)$ is continuous and therefore we may choose the neighborhood $U_1$ of $x_\infty\in \sX$ (and $V_1$ of the origin in $\tilde\sX$) small enough that $\sT(U_1)$ is contained in the subset $\sI(\sG, \tilde\sG)$ of invertible operators. Hence, for every $x\in U_1$, the bounded linear operator $D\Phi(x):\sX\to\tilde\sX$ has a  bounded extension $\sT(x):\sG\to\tilde\sG$ that is invertible. Therefore,
\[
\sT(x) \restriction \sX = D\Phi(x)
\quad\text{and}\quad
\sT(x)^{-1} \restriction \tilde\sX = (D\Phi(x))^{-1} = D\Psi(\Phi(x)),
\]
for every $x\in U_1$; thus, $\sT(x)^{-1}:\tilde\sG \to \sG$ is a bounded extension of $D\Psi(\Phi(x)):\tilde\sX\to\sX$. This establishes Item \eqref{item:lemma_inv_ext_DPsi}.

Consider Item \eqref{item:lemma_inv_ext_map_continuity}. The inversion map
\[
\iota:\sI(\sG, \tilde\sG) \ni T \mapsto T^{-1} \in \sI(\sG, \tilde\sG)
\]
is continuous and hence the composition
\[
U_1 \ni x \mapsto (\iota\circ \sT)(x) = \sT(x)^{-1} \in \sI(\sG, \tilde\sG)
\]
is also continuous. This establishes Item \eqref{item:lemma_inv_ext_map_continuity} and completes the proof of Lemma \ref{lem:inv_ext}.
\end{proof}

We then have the following variant of Lemma \ref{lem:laux}.

\begin{lem}[Properties of $C^2$ functions with Hessian operators that are Fredholm with index zero]
\label{lem:laux2}
Assume the hypotheses of Theorem~\ref{mainthm:Lojasiewicz-Simon_gradient_inequality2} and let $\Pi$ be as in \eqref{eq:Projection_Xdual_onto_K2}, now with $K = \Ker(\sE''(x_\infty):\sX\to\sX^*)$. Then there exist an open neighborhood $U_0 \subset \sU$ of $x_\infty$ and an open neighborhood $V_0\subset \tilde\sX$ of the origin such that the $C^1$ map $\Phi$ in \eqref{eq:Definition_Phi_map}, namely
\[
\Phi : \sU\to  \tilde\sX,\quad  x \mapsto \sM(x) +  \Pi(x-x_\infty),
\]
when restricted to the set $U_0$, has a $C^1$ inverse $\Psi : V_0 \to U_0$. Moreover, there is a constant $C = C(\sM,U_0,V_0) \in [1,\infty)$ such that
\begin{equation}
\label{eq:aux2}
\|\Psi(\Pi \alpha) - \Psi(\alpha)\|_{\sG} \leq C \|\sM(\Psi(\alpha))\|_{\tilde\sG},\quad \forall\, \alpha\in V_0.
\end{equation}
\end{lem}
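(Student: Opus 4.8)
The plan is to re-use the construction of the local inverse $\Psi$ from Lemma~\ref{lem:laux} unchanged, and to replace only the final estimate: \eqref{eq:aux} controls the $\sX$-norm by the $\tilde\sX$-norm, whereas \eqref{eq:aux2} must control the weaker $\sG$-norm by the weaker $\tilde\sG$-norm. The construction of $U_0, V_0$ and the $C^1$ diffeomorphism $\Psi: V_0 \to U_0$ inverting the map $\Phi$ of \eqref{eq:Definition_Phi_map} uses only the spaces $\sX, \tilde\sX, \sX^*$, the definiteness of $\sX \subset \sX^*$, and the Fredholm index-zero property of $\sM'(x_\infty): \sX \to \tilde\sX$---all of which are among the hypotheses of Theorem~\ref{mainthm:Lojasiewicz-Simon_gradient_inequality2}---so the argument of Lemma~\ref{lem:laux} applies verbatim, and in particular the identity \eqref{eq:IFT},
\[
\alpha = \sM(\Psi(\alpha)) + \Pi(\Psi(\alpha) - x_\infty), \quad \forall\, \alpha \in V_0,
\]
continues to hold.

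For \eqref{eq:aux2}, I would begin, exactly as in Lemma~\ref{lem:laux}, with the Fundamental Theorem of Calculus,
\[
\Psi(\Pi\alpha) - \Psi(\alpha) = \int_0^1 D\Psi\big(\alpha + t(\Pi\alpha - \alpha)\big)(\Pi\alpha - \alpha)\,dt, \quad \forall\, \alpha \in V_0,
\]
where each integrand lies in $\sX \subset \sG$. The decisive step is to re-express each integrand using Lemma~\ref{lem:inv_ext}~\eqref{item:lemma_inv_ext_DPsi}: for $x \in U_1$ the operator $\sT(x)^{-1}: \tilde\sG \to \sG$ defined via \eqref{eq:Definition_sT_map} is a bounded extension of $(D\Psi)(\Phi(x)): \tilde\sX \to \sX$, and since $\Pi\alpha - \alpha \in \tilde\sX \subset \tilde\sG$ the two operators agree on this vector. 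Writing $x_t := \Psi(\alpha + t(\Pi\alpha - \alpha)) \in U_1$, I therefore bound the $\sG$-norm of each integrand by $\|\sT(x_t)^{-1}\|_{\sL(\tilde\sG, \sG)}\,\|\Pi\alpha - \alpha\|_{\tilde\sG}$ and integrate to obtain
\[
\|\Psi(\Pi\alpha) - \Psi(\alpha)\|_\sG \leq M' \|\Pi\alpha - \alpha\|_{\tilde\sG}, \quad M' := \sup_{x \in U_1} \|\sT(x)^{-1}\|_{\sL(\tilde\sG, \sG)}.
\]
After shrinking $U_1$ (hence $V_1 = \Phi(U_1)$) to a small bounded neighborhood of $x_\infty$, the supremum $M'$ is finite because the map $x \mapsto \sT(x)^{-1}$ is continuous on $U_1$ by Lemma~\ref{lem:inv_ext}~\eqref{item:lemma_inv_ext_map_continuity}.

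It then remains to estimate $\|\Pi\alpha - \alpha\|_{\tilde\sG}$, which mirrors the corresponding computation in Lemma~\ref{lem:laux} but with the $\tilde\sG$-norm in place of the $\tilde\sX$-norm. Using $\Pi^2 = \Pi$ and \eqref{eq:IFT} one finds $\Pi\alpha - \alpha = (\Pi - I)\sM(\Psi(\alpha))$, and since $\Pi: \tilde\sG \to \tilde\sG$ is bounded this gives $\|\Pi\alpha - \alpha\|_{\tilde\sG} \leq (C_1 + 1)\|\sM(\Psi(\alpha))\|_{\tilde\sG}$ with $C_1 := \|\Pi\|_{\sL(\tilde\sG, \tilde\sG)}$; combining with the previous display yields \eqref{eq:aux2} with $C = M'(C_1 + 1)$. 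I expect the main obstacle to be precisely the norm upgrade in the second paragraph: \emph{a priori} $D\Psi$ is bounded only as an operator $\tilde\sX \to \sX$, so without further input one recovers nothing beyond \eqref{eq:aux}. The entire purpose of the auxiliary spaces $\sG, \tilde\sG$ and the extension $\sM_1$ of $\sM'$ is to furnish, through Lemma~\ref{lem:inv_ext}, a \emph{uniformly bounded} extension of $D\Psi$ into $\sL(\tilde\sG, \sG)$, and checking that this extension may legitimately be substituted into the Fundamental Theorem of Calculus integrand---because it agrees with $D\Psi$ on the relevant vectors of $\tilde\sX$---is the crux of the argument.
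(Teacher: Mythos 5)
Your proposal is correct and follows essentially the same route as the paper's proof: both re-use the local inverse $\Psi$ from Lemma~\ref{lem:laux} (on the possibly smaller neighborhoods where $\sT(x)$ is invertible), substitute the uniformly bounded extension $\sT(x)^{-1}\in\sL(\tilde\sG,\sG)$ of $D\Psi(\Phi(x))$ into the Fundamental Theorem of Calculus integrand, and bound $\|\Pi\alpha-\alpha\|_{\tilde\sG}$ via \eqref{eq:IFT} and $\Pi^2=\Pi$. Your simplification $\Pi\alpha-\alpha=(\Pi-I)\sM(\Psi(\alpha))$ is a correct (and slightly cleaner) restatement of the paper's computation, and you correctly identify the norm upgrade through Lemma~\ref{lem:inv_ext} as the crux.
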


\begin{proof}
Let $\Psi:V_1\to U_1$ be the $C^1$ inverse of the map $\Phi:U_1\to V_1$ defined in the proof of Lemma \ref{lem:laux}, now for the possibly smaller open neighborhoods $U_1 \subset \sU$ and $V_1 \subset \tilde\sX$ provided by Lemma~\ref{lem:inv_ext}. By shrinking the neighborhoods $U_1$ and $V_1$ further if necessary, we may again assume that $V_1$ is a convex neighborhood of the origin in $\tilde\sX$. Since $\Pi: \tilde\sX \to \tilde\sX$ is a bounded linear operator, we may choose a smaller open neighborhood $V_0 \subset V_1$ of the origin in $\tilde\sX$ with $\Pi(V_0) \subset V_1$ and define $U_0 := \Psi(V_0)$.

It remains to verify the inequality \eqref{eq:aux2} by making the necessary changes to the verification of the inequality \eqref{eq:aux} in Lemma~\ref{lem:laux}. By Lemma~\ref{lem:inv_ext}, the following map is well-defined,
\[
\hat{\sT} : V_0 \ni \alpha \mapsto \sT(\Psi(\alpha))^{-1} \in \sL( \tilde \sG, \sG).
\]
We first observe that
\begin{align*}
\Psi(\Pi \alpha)-\Psi(\alpha)
&=
\int_{0}^{1}\frac{d}{dt}\Psi(\alpha + t(\Pi \alpha-\alpha))\,dt
\\
&= \left(\int_0^1 D\Psi(\alpha + t(\Pi \alpha-\alpha))\, dt\right)\, (\Pi \alpha -\alpha)
\\
&= \left(\int_0^1 \hat{\sT}(\alpha + t(\Pi \alpha-\alpha))\, dt\right)\, (\Pi \alpha -\alpha), \quad\forall\, \alpha \in V_0,
\end{align*}
since $\alpha + t(\Pi \alpha-\alpha) \in V_1 = \Phi(U_1) \subset \tilde\sX$ for all $t\in[0,1]$ and so Lemma~\ref{lem:inv_ext} gives
\[
\hat{\sT}(\alpha + t(\Pi \alpha-\alpha))
=
D\Psi(\alpha + t(\Pi \alpha-\alpha))
\in
\sL(\tilde\sG,\sG).
\]
Thus, we have
\[
\|\Psi(\Pi \alpha)-\Psi(\alpha)\|_{\sG}  \leq  M \|\Pi \alpha-\alpha\|_{\tilde\sG},  \quad\forall\, \alpha \in V_0,
\]
where
\[
M := \sup_{\alpha_1\in V_1} \|\hat{\sT}(\alpha_1)\|_{\sL(\tilde\sG,\sG)} < \infty,
\]
and $M$ is finite (possibly after shrinking $V_1$) by Lemma \ref{lem:inv_ext} \eqref{item:lemma_inv_ext_map_continuity}, which provides continuity of $\hat{\sT}$. Finally, for all $\alpha \in V_0$,
\begin{align*}
\Pi \alpha-\alpha &= \Pi \alpha - \Phi(\Psi(\alpha))
\quad\text{(since $\Phi(\Psi(\alpha)=\alpha$)}
\\
&= \Pi \alpha - \sM(\Psi(\alpha)) -  \Pi(\Psi(\alpha) -x_\infty))
\quad\text{(by the definition \eqref{eq:Definition_Phi_map} of $\Phi$)}
\\
&= \Pi(\alpha - \Pi(\Psi(\alpha) -x_\infty)) - \sM(\Psi(\alpha)) \qquad\text{(since $\Pi^2 =\Pi$),}
\end{align*}
and, if $C_1 = C_1(K) \in [1,\infty)$ is the norm of the projection operator $\Pi \in \sL(\tilde\sG)$, then
\begin{align*}
\|\Pi(\alpha- \Pi(\Psi(\alpha) - x_\infty))\|_{\tilde\sG} &\leq C_1\|\alpha- \Pi(\Psi(\alpha) - x_\infty)\|_{\tilde\sG}
\\
&= C_1\|\sM(\Psi(\alpha))\|_{\tilde\sG}.
\end{align*}
Taking $\tilde\sG$ norms of the preceding identity, we conclude that
\[
\|\Pi \alpha-\alpha\|_{\tilde\sG} \leq (C_1+1)\|\sM(\Psi(\alpha))\|_{\tilde\sG}, \quad \forall\, \alpha\in V_0.
\]
Therefore, by combining the preceding inequalities, we obtain
\[
\|\Psi(\Pi \alpha)-\Psi(\alpha)\|_{\sG} \leq M(C_1+1)\|\sM(\Psi(\alpha))\|_{\tilde\sG}, \quad \forall\, \alpha\in V_0,
\]
which is the desired inequality \eqref{eq:aux2}. This concludes the proof of Lemma \ref{lem:laux2}.
\end{proof}

Next, we have the following variant of Proposition \ref{prop:LSprop}.

\begin{prop}[{\L}ojasiewicz--Simon gradient inequalities for analytic and Morse--Bott functions on Banach spaces]
\label{prop:LSprop2}
Assume the hypotheses of Lemma~\ref{lem:laux2}. Then the following hold.
\begin{enumerate}
\item
\label{item:LSProp_Morse-Bott2}
If $\sE$ is Morse--Bott at $x_\infty$, then there exist an open neighborhood $W_0 \subset \sU$ of $x_\infty$ and a constant $C = C(\sE,W_0) \in [1, \infty)$ such that
\[
|\sE(x) - \sE(x_\infty)| \leq C\|\sM(x)\|_{\tilde\sG}^2, \quad \forall\, x\in W_0.
\]
\item
\label{item:LSProp_analytic2}
If $\sM$ is analytic on $\sU$, then there exist an open neighborhood $W_0 \subset \sU$ of $x_\infty$ and constants $C = C(\sE,W_0) \in [1, \infty)$ and $\beta\in (1,2]$ such that
\[
|\sE(x) - \sE(x_\infty)| \leq C\|\sM(x)\|_{\tilde\sG}^\beta, \quad \forall\, x\in W_0.
\]
\end{enumerate}
\end{prop}

\begin{proof}
The proof of Proposition~\ref{prop:LSprop2} follows \mutatis that of Proposition \ref{prop:LSprop}; the only changes involve replacements of Banach space norms (for $\sX$, $\tilde\sX$ by $\sG$, $\tilde\sG$) when the Mean Value Theorem is applied. Thus, in the derivation of inequality \eqref{eq:Eprime_Psi_triangle}, we had observed that
\begin{multline*}
\|\sE'(\Psi(\alpha) + t(\Psi(\Pi \alpha)- \Psi(\alpha)))\|_{\sX^*}
\\
\leq \|\sE'(\Psi(\alpha) + t (\Psi(\Pi \alpha)-\Psi(\alpha))) - \sE'(\Psi(\alpha))\|_{\sX^*}
+ \|\sE'(\Psi(\alpha))\|_{\sX^*}, \quad\forall\, \alpha \in V_0,
\end{multline*}
but we now obtain
\begin{multline}
\label{eq:Eprime_Psi_triangle1}
\|\sE'(\Psi(\alpha) + t(\Psi(\Pi \alpha)- \Psi(\alpha)))\|_{\sX^*}
\\
\leq C_0\|\sM(\Psi(\alpha) + t (\Psi(\Pi \alpha)-\Psi(\alpha))) - \sM(\Psi(\alpha))\|_{\tilde\sX}
+ C_1\|\sM(\Psi(\alpha))\|_{\tilde\sG},  \quad\forall\, \alpha \in V_0,
\end{multline}
where $C_1 \in [1,\infty)$ is the norm of the continuous embedding $\tilde\sG\hookrightarrow \sX^*$ and $C_0$ is as before. Combining the inequalities \eqref{eq:Eprime_Psi_triangle1} and \eqref{eq:xua} with the equality \eqref{eq:ftc} yields
\[
|\sE(\Psi(\alpha)) - \Gamma(\Pi \alpha)|
\leq
\left(C_0M_1\|\Psi(\Pi \alpha)-\Psi(\alpha)\|_\sX +  C_1\|\sM(\Psi(\alpha))\|_{\tilde\sG} \right)
\|\Psi(\Pi \alpha)-\Psi(\alpha)\|_\sX.
\]
Combining the preceding inequality with \eqref{eq:aux2} gives the following analogue of \eqref{eq:EPsi_minus_GammaPi_bounded_by_gradient_E_Psi_squared},
\begin{equation}
\label{eq:EPsi_minus_GammaPi_bounded_by_gradient_E_Psi_squared_tilde_sG}
|\sE(\Psi(\alpha)) - \Gamma(\Pi \alpha)| \leq C_2\|\sM(\Psi(\alpha))\|^2_{\tilde\sG}, \quad\forall\, \alpha \in V_0,
\end{equation}
for a constant $C_2 \in [1,\infty)$. The remainder of the proof of Item \eqref{item:LSProp_Morse-Bott2} in Proposition~\ref{prop:LSprop2}, the case when $\sE$ is Morse--Bott, now follows \mutatis the proof of the analogous Item \eqref{item:LSProp_Morse-Bott} in Proposition~\ref{prop:LSprop}.

Consider Item \eqref{item:LSProp_analytic2}, where $\sE$ is assumed to be analytic on $\sU$. Let $V_2 \subset V_0$ be a possibly smaller open neighborhood of the origin, as described in the setup for inequality \eqref{eq:finiteLS}, and indeed $V_2\subset V_1$ as later assumed in the proof of Proposition~\ref{prop:LSprop}. We replace inequality \eqref{eq:Gradient_Gamma_bounded_by_gradient_E} by
\begin{equation}
\label{eq:Gradient_Gamma_bounded_by_gradient_E1}
\|\Gamma'(\Pi \alpha)\|_{K^*}
\leq
M_2\|\sE'(\Psi(\Pi \alpha))\|_{\sX^*}
\leq
C_1M_2\|\sM(\Psi(\Pi \alpha))\|_{\tilde\sG}, \quad\forall\, \alpha \in V_2.
\end{equation}
Hence, for every $\alpha\in V_2$,
\begin{align*}
|\Gamma(\Pi \alpha) - \sE(x_\infty)|
&\leq
C\|\sM(\Psi(\Pi \alpha))\|_{\tilde\sG}^\beta \qquad \text{(by \eqref{eq:finiteLS} and \eqref{eq:Gradient_Gamma_bounded_by_gradient_E1})}
\\
&\leq C\left(\|\sM(\Psi(\Pi \alpha)) - \sM(\Psi(\alpha))\|_{\tilde\sG} + \|\sM(\Psi(\alpha))\|_{\tilde\sG} \right)^\beta
\\
&\leq C\left(C_3\|\sM(\Psi(\Pi \alpha)) - \sM(\Psi(\alpha))\|_{\tilde\sX} + \|\sM(\Psi(\alpha))\|_{\tilde\sG} \right)^\beta
\\
&\leq
C\left(C_3M_1\|\Psi(\Pi \alpha)-\Psi(\alpha)\|_\sX + \|\sM(\Psi(\alpha))\|_{\tilde\sG}\right)^\beta \qquad\text{(by \eqref{eq:xua} for $t=1$),}
\end{align*}
where $C_3 \in [1,\infty)$ is the norm of the continuous embedding $\tilde\sX \subset \tilde\sG$. By combining the preceding inequality with \eqref{eq:aux2}, we obtain the following analogue of \eqref{eq:Gamma_minus_E_bounded_by_gradient_E_Psi_alpha},
\begin{equation}
\label{eq:Gamma_minus_E_bounded_by_gradient_E_Psi_alpha_tilde_sG}
|\Gamma(\Pi \alpha) - \sE(x_\infty)| \leq C\|\sM(\Psi(\alpha))\|_{\tilde\sG}^\beta.
\end{equation}
Consequently, for every $\alpha\in V_2$,
\begin{align*}
|\sE(\Psi(\alpha)) - \sE(x_\infty)|
&\leq
|\sE(\Psi(\alpha)) - \Gamma(\Pi \alpha)| + | \Gamma(\Pi \alpha) - \sE(x_\infty)|
\\
&\leq C\left( \|\sM(\Psi(\alpha))\|_{\tilde\sG}^2 + \|\sM(\Psi(\alpha))\|_{\tilde\sG}^\beta \right)
\qquad\text{(by \eqref{eq:EPsi_minus_GammaPi_bounded_by_gradient_E_Psi_squared_tilde_sG} and
\eqref{eq:Gamma_minus_E_bounded_by_gradient_E_Psi_alpha_tilde_sG})}
\\
&\leq C \|\sM(\Psi(\alpha))\|_{\tilde\sG}^\beta \left( 1+ \|\sM(\Psi(\alpha))\|_{\tilde\sG}^{2-\beta} \right)
\\
&\leq CM_4 \|\sM(\Psi(\alpha))\|_{\tilde\sG}^\beta.
\end{align*}
Here, for small enough $V_2$ and noting that $\sM(\Psi(\alpha)) \in \tilde\sG$ is a continuous function of $\alpha \in V_2$ (since $\Psi:V_1\to U_1$ is $C^1$ by construction and the embedding $\tilde\sX\subset \tilde\sG$ is continuous), we have
\begin{align*}
M_4 &:= 1+ \sup_{\alpha \in V_2} \|\sM(\Psi(\alpha))\|_{\tilde\sG}^{2-\beta}
\\
&\,\leq C_3^{2-\beta}\left(1+ \sup_{\alpha \in V_2} \|\sM(\Psi(\alpha))\|_{\tilde\sX}^{2-\beta}\right)
\\
&\,= C_3^{2-\beta}M_3 < \infty,
\end{align*}
where $M_3 \in [1,\infty)$ is as in the proof of Proposition~\ref{prop:LSprop}. The remainder of the proof of Item \eqref{item:LSProp_analytic2} in Proposition~\ref{prop:LSprop2} follows \mutatis the proof of Proposition~\ref{prop:LSprop}
\end{proof}

We can now complete the

\begin{proof}[Proofs of Theorems \ref{mainthm:Lojasiewicz-Simon_gradient_inequality2} and
  \ref{mainthm:Optimal_Lojasiewicz-Simon_gradient_inequality_Morse-Bott_energy_functional}]
  The conclusions follow immediately from Proposition~\ref{prop:LSprop2}.
\end{proof}

%
%

\bibliography{master,mfpde}
\bibliographystyle{amsplain-nodash}

\end{document}